\title[exceptional representations and Richardson elements]{Exceptional representations of a double quiver of type $A$, and Richardson
elements in seaweed Lie algebras}
\author{Bernt Tore Jensen, Xiuping Su and Rupert W.T. Yu}
\thanks{The first author is supported by NFR project HoGeMetAlg}
\thanks{The second author is supported by Marie-Curie Fellowship IIF}
\newenvironment{acknowledgements}{\section*{Acknowledgments}}{}
\newtheorem{theorem}{Theorem}[section]
\newtheorem{lemma}[theorem]{Lemma}
\newtheorem{proposition}[theorem]{Proposition}
\newtheorem{definition}[theorem]{Definition}
\newtheorem{example}[theorem]{Example}
\newtheorem{question}[theorem]{Question}
\newtheorem{corollary}[theorem]{Corollary}
\newtheorem{remark}[theorem]{Remark}
\newtheorem{remarks}[theorem]{Remarks}
\newcommand{\rep}{\mathrm{Rep}}
\newcommand{\supp}{\mathrm{supp}}
\renewcommand{\hom}{\mathrm{Hom}}
\newcommand{\gl}{\mathrm{GL}}
\newcommand{\ext}{\mathrm{Ext}}
\newcommand{\soc}{\mathrm{Soc}}
\newcommand{\dv}[1]{{\bf #1}}
\newcommand{\minus}{\backslash}
\newcommand{\field}{{k}}
\newcommand{\cok}{\mathrm{Cok}}
\renewcommand{\ker}{\mathrm{Ker}}
\newcommand{\im}{\mathrm{Im}}
\newcommand{\lra}{\longrightarrow}
\newcommand{\ra}{\rightarrow}
\newcommand{\sdp}{\times\kern-.2em\vrule height1.1ex depth-.05ex}
\newcommand{\epi}{\lra \kern-.8em\ra}
\begin{document}

\begin{abstract}
In this paper, we study the set of $\Delta$-filtered modules of quasi-hereditary algebras arising
from quotients of the double of quivers of type $A$. Our main result is that for any fixed $\Delta$-dimension
vector, there is a unique (up to isomorphism) exceptional $\Delta$-filtered module. We then apply this
result to show that there is always an open adjoint orbit in the nilpotent radical of a seaweed Lie algebra
in $\mathrm{gl}_{n}(\field )$, thus answering positively in this $\mathrm{gl}_{n}(\field )$ case
to a question raised independently by Michel Duflo and Dmitri Panyushev. An example of a seaweed
Lie algebra in a simple Lie algebra of type $E_{8}$ not admitting an open orbit in its
nilpotent radical is given.
\end{abstract}

\maketitle

\section{Introduction}

Let $\mathfrak{g}$ be a reductive Lie algebra over an algebraically closed field $\field$ of characteristic zero.
A Lie subalgebra $\mathfrak{q}$ of $\mathfrak{g}$ is called a seaweed Lie algebra if there exists a
pair of parabolic subalgebras $(\mathfrak{p},\mathfrak{p}')$ of $\mathfrak{g}$ such that
$\mathfrak{q}=\mathfrak{p}\cap \mathfrak{p}'$ and $\mathfrak{p}+\mathfrak{p}'=\mathfrak{g}$
(we call such a pair of parabolic subalgebras weakly opposite). For example, take the pair
consisting of a Borel subalgebra and its opposite.

Seaweed Lie algebras are introduced by Vladimir Dergachev and Alexandre Kirillov \cite{DK}
in the case
$\mathfrak{g}=\mathrm{gl}_{n}(\field )$,
and in the above
generality by Dmitri Panyushev \cite{P}. The set of seaweed Lie algebras in $\mathfrak{g}$ contains
clearly all parabolic subalgebras of $\mathfrak{g}$ and their Levi factors. In particular, they provide
new examples of index zero Lie algebras (or Frobenius Lie algebras) \cite{DK,P,TYart2}.

A general formula for the index of a seaweed Lie algebra conjectured
in \cite{TYart2} was recently proved by Anthony Joseph
\cite{joseph}. This is an unexpected and pleasant surprise.
Naturally, we would like to know to what extent certain classical
results can be generalized to this large class of Lie subalgebras of
$\mathfrak{g}$.

Let $\mathbf{G}$ be a connected reductive algebraic group whose Lie
algebra is $\mathfrak{g}$. Let $(\mathfrak{p},\mathfrak{p}')$ be a
pair of weakly opposite parabolic subalgebras of $\mathfrak{g}$, and
$\mathfrak{q}=\mathfrak{p}\cap \mathfrak{p}'$ the corresponding
seaweed Lie algebra. Denote by  $\mathbf{P}$ and $\mathbf{P}'$ the
parabolic subgroups of $\mathbf{G}$ corresponding to $\mathfrak{p}$ and
$\mathfrak{p}'$. Set $\mathbf{Q}=\mathbf{P}\cap \mathbf{P}'$.

We are interested in the following question raised by Michel Duflo and Dmitri Panyushev
independently :

\begin{question}\label{mainquestion}
Is there an open $\mathbf{Q}$-orbit in the nilpotent radical
$\mathfrak{n}$ of $\mathfrak{q}$ ?
\end{question}

Equivalently, we may ask if there is an element $x\in \mathfrak{n}$ verifying
$[x,\mathfrak{q}] = \mathfrak{n}$.

In the case where $\mathfrak{q}$ is a parabolic subalgebra of
$\mathfrak{g}$, then the answer is yes, and this is a result that is
commonly known as Richardson's Dense Orbit Theorem
\cite{Richardson}. If there is an open $\mathbf{Q}$-orbit in the
nilpotent radical of $\mathfrak{q}$, then an element in the open
$\mathbf{Q}$-orbit is called a Richardson element of $\mathfrak{q}$.

Using some computations with the program {\sc Gap4}, we are able to check that
Richardson elements exist in any seaweed Lie algebra in a simple Lie
algebra of rank $\leq 7$. However, in type $E_{8}$, we found a seaweed Lie algebra
whose nilpotent  radical does not contain an open orbit.

Our present task is to study the case of seaweed Lie algebras in $\mathrm{gl}_{n}(\field )$.
We prove that :

\begin{theorem}
Any seaweed Lie algebra in $\mathrm{gl}_{n}(\field )$ has a Richardson element.
\end{theorem}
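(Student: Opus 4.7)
The plan is to translate Question~\ref{mainquestion} for seaweed subalgebras of $\mathrm{gl}_n(\field)$ into a question about representations of a quasi-hereditary algebra built from a quotient of the double of a type-$A$ quiver, and then invoke the main representation-theoretic result announced in the abstract.

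First, I would encode the pair $(\mathfrak{p},\mathfrak{p}')$ by the two compositions of $n$ determining the flags stabilised by $\mathbf{P}$ and $\mathbf{P}'$. From this combinatorial data one builds a double quiver $Q$ of type $A$, an admissible ideal of relations $I$ making $A=\field Q/I$ quasi-hereditary, and a distinguished dimension vector $\dv{d}$ read off from the two compositions. The target of this step is a $\mathbf{Q}$-equivariant isomorphism of affine varieties between the nilpotent radical $\mathfrak{n}$ (with $\mathbf{Q}$ acting by conjugation) and the variety of $\Delta$-filtered representations of $A$ of some fixed $\Delta$-dimension vector (with $\mathbf{Q}$ identified with the relevant change-of-basis group). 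The two standard $\Delta$-modules should correspond to the strata of $\mathfrak{n}$ coming from the two parabolics, so that a $\Delta$-filtration on a module corresponds to a decomposition of a matrix in $\mathfrak{n}$ into its pieces with respect to the two flags.

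Second, under this dictionary $\mathbf{Q}$-orbits on $\mathfrak{n}$ correspond to isomorphism classes of $\Delta$-filtered $A$-modules of the prescribed $\Delta$-dimension vector. The main theorem of the paper then produces a unique (up to isomorphism) exceptional such module $M$, i.e.\ one with $\ext^{1}(M,M)=0$ in the category of $\Delta$-filtered modules. Let $x\in\mathfrak{n}$ be an element corresponding to $M$. The standard tangent-space calculation identifies the tangent space to the orbit $\mathbf{Q}\cdot x$ with the image of $y\mapsto[y,x]:\mathfrak{q}\to\mathfrak{n}$, whose cokernel is governed by $\ext^{1}(M,M)$. Vanishing of this Ext group therefore forces $\dim\mathbf{Q}\cdot x = \dim\mathfrak{n}$, so $\mathbf{Q}\cdot x$ is open in $\mathfrak{n}$ and $x$ is a Richardson element.

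The main obstacle I expect is the first step: choosing $Q$, $I$ and $\dv{d}$ precisely enough that (i) the representations parametrise $\mathfrak{n}$ as $\mathbf{Q}$-varieties, (ii) the standard modules of the quasi-hereditary structure match the natural Jordan--H\"older constituents coming from the two flags, and (iii) the intrinsic $\ext^{1}$ in the $\Delta$-filtered subcategory coincides with the obstruction to openness of $\mathbf{Q}\cdot x$ inside $\mathfrak{n}$ (and not, for instance, with some a priori larger $\ext^{1}$ in $\mathrm{mod}\,A$). Once this bookkeeping is in place, the existence and uniqueness of the exceptional $\Delta$-filtered module does the non-formal work, and the conclusion follows from a routine Ext-versus-tangent-space argument. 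The fact that the construction is specific to type $A$ is consistent with the remark in the introduction that an analogous statement fails in type $E_{8}$.
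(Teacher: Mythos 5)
Your proposal is correct and follows essentially the same route as the paper: translate the pair of weakly opposite flags into a type-$A$ double quiver $(\tilde{Q},\mathcal{I},\dv{d})$, establish a $\mathbf{Q}$-equivariant correspondence between $\mathfrak{n}$ and (a slice $R^{\alpha}$ inside) the $\Delta$-filtered representation variety so that $\mathbf{Q}$-orbits match $\gl(\dv{d})$-orbits, and then invoke the existence and uniqueness of the exceptional $\Delta$-filtered module together with Voigt's lemma to produce the open orbit. The only small refinement the paper makes, which you correctly flag under ``bookkeeping,'' is that $\mathfrak{n}$ is identified not with all of $\rep_{\Delta}(\tilde{Q},\mathcal{I},\dv{d})$ but with the affine slice $R^{\alpha}$ on which the arrows of $Q$ are fixed to be standard inclusions, and that $\ext^{1}$ in $\mathrm{mod}\,\mathbf{D}$ suffices since $\rep_{\Delta}$ is open in $\rep$.
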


In this particular case, seaweed Lie algebras can be viewed as the stabiliser of
a pair of weakly opposite flags. This provides a nice
description of seaweed Lie algebras, and allows us to transfer the problem to a
quiver representations setting, extending the one for parabolic subalgebras considered by
Thomas Br\"{u}stle,  Lutz Hille, Claus Ringel and  Gerhard R\"{o}hrle
\cite{BHRR}.

More precisely, we associate to $\mathfrak{q}$ a certain double
quiver $\tilde{Q}$ of type $A$ together with a dimension vector $\dv
d$. Then the quotient of the path algebra of $\tilde{Q}$ by certain
relations $\mathcal{I}$ has a structure of a quasi-hereditary
algebra with respect to some partial order on the vertices. See
Section \ref{doublequiver} for definitions and basic properties for
quasi-hereditary algebras.

A family of quasi-hereditary algebras constructed as quotients of the double of quivers has
appeared in different contexts, see \cite{D,DX,HV,KX}. These
quasi-hereditary algebras are sometimes called the twisted double incidence algebras of
posets, see for example \cite{DX}.

The existence of an open $\mathbf{Q}$-orbit in the nilpotent radical
of $\mathfrak{q}$ corresponds then to the existence of an open $\gl
(\dv d)$-orbit in the set  $\rep_{\Delta}(\tilde{Q} ,
\mathcal{I},\dv d)$ of $\Delta$-filtered modules of dimension vector
$\dv d$ verifying $\mathcal{I}$, where $\dv d$ is a dimension vector
completely determined by the pair of weakly opposite flags
associated to $\mathfrak{q}$.

The main theorem we prove is:

\begin{theorem}
There exists a unique (up to isomorphism) exceptional
$\Delta$-filtered module for any given $\Delta$-dimension vector.
\end{theorem}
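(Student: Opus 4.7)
The plan is to induct on the total $\Delta$-dimension $|\dv d| = \sum_i d_i$. The base case $\dv d = e_i$ (a coordinate vector) is immediate: the only $\Delta$-filtered module of this dimension vector is $\Delta(i)$ itself, and general quasi-hereditary theory gives $\smorphism(\Delta(i)) = \field$ and $\ext^1(\Delta(i),\Delta(i)) = 0$, so $\Delta(i)$ is exceptional.

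For the inductive step, I would fix a vertex $i$ that is minimal, in the quasi-hereditary partial order, among those with $d_i > 0$. By induction there is a unique exceptional $\Delta$-filtered module $M'$ of $\Delta$-dimension vector $\dv d - e_i$, and one takes as candidate for $M$ the total space of a short exact sequence
\[
0 \lra \Delta(i) \lra M \lra M' \lra 0.
\]
To single out the right extension, I would compute $\ext^1(M',\Delta(i))$ by d\'evissage along the $\Delta$-filtration of $M'$; the minimality of $i$, combined with the standard vanishing of $\ext^1(\Delta(j),\Delta(i))$ in a quasi-hereditary algebra for $j$ above $i$ in the order, kills most contributions and leaves a controlled space whose structure is governed by the shape of $\tilde Q$ and the relations $\mathcal{I}$. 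Inside this controlled space one then distinguishes the class whose total space is rigid, typically by a minimality/genericity argument.

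The crux is checking that the resulting $M$ is exceptional, i.e.\ $\smorphism(M) = \field$ and $\ext^1(M,M) = 0$. I would expand $\ext^\bullet(M,M)$ into the four pieces $\ext^\bullet(\Delta(i),\Delta(i))$, $\ext^\bullet(M',\Delta(i))$, $\ext^\bullet(\Delta(i),M')$, $\ext^\bullet(M',M')$ by applying the defining short exact sequence twice, and arrange the connecting maps so that rigidity of $M'$ and vanishing of the off-diagonal $\ext^1$'s propagate to $M$. The $\ext^2$ groups that appear here must be bounded using $\mathcal{I}$ and the explicit structure of $\tilde Q$; this is where the type-$A$ combinatorics really enter, and I expect it to be the main technical obstacle and the reason why the choice of a \emph{minimal} $i$ (rather than an arbitrary one) is forced.

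For uniqueness at fixed $\Delta$-dimension vector, I would pass to the geometric picture. The variety $\rep_\Delta(\tilde Q,\mathcal{I},\dv d)$ is irreducible, which one proves by realizing it as the image of an irreducible bundle of filtrations $0 \subset M_1 \subset \cdots \subset M_{|\dv d|}$ with prescribed $\Delta$-subquotients. A standard Voigt-type tangent space calculation identifies exceptional modules with those whose $\gl(\dv d)$-orbit is open, and since an irreducible variety admits at most one dense orbit, any two exceptional $\Delta$-filtered modules with the same $\Delta$-dimension vector must be isomorphic. Uniqueness also feeds back into the inductive step, by guaranteeing that the class singled out above is unambiguous.
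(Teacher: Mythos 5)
Your uniqueness argument (Voigt's lemma plus irreducibility of $\rep_\Delta(\tilde Q,\mathcal I,\dv d)$, so there is at most one dense $\gl(\dv d)$-orbit) is exactly the paper's, though the paper proves irreducibility not by flag bundles but by observing that every orbit meets the affine space $R^\alpha$ of Proposition~\ref{irreducibility}. The existence argument, however, has a genuine gap, and in fact the short exact sequence is oriented the wrong way.

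If $i$ is minimal in $\succeq$ among vertices with $d_i>0$, then for every $\Delta$-composition factor $\Delta(j)$ of $M'$ one has $j\not\prec i$, and a quasi-hereditary algebra satisfies $\ext^1(\Delta(j),\Delta(i))=0$ unless $j\prec i$. Your d\'evissage therefore gives $\ext^1(M',\Delta(i))=0$ \emph{always}, not ``a controlled space'': the only extension $0\to\Delta(i)\to M\to M'\to 0$ is split, and $M=M'\oplus\Delta(i)$ is generally not exceptional since $\ext^1(\Delta(i),M')$ need not vanish. Concretely, for $Q=(1\leftarrow 2)$ and $\dv d=(1,1)$ the unique exceptional module is the nonsplit extension $P(1)$ of $\Delta(1)$ by $\Delta(2)$, whereas your construction produces $\Delta(1)\oplus\Delta(2)$, which has a self-extension. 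The sequence should be $0\to M'\to M\to\Delta(i)\to 0$ for minimal $i$ (submodules carry the $\succeq$-larger factors; see the discussion after Lemma~\ref{factorsink}). Even after that correction, the crux of the proof --- ``inside this controlled space one then distinguishes the class whose total space is rigid, typically by a minimality/genericity argument'' --- is precisely what is missing: a nontrivial extension of rigid modules need not be rigid, a generic point of $\ext^1(\Delta(i),M')$ has no a priori reason to be exceptional, and this parametrization need not even cover a dense subset of $\rep_\Delta(\tilde Q,\mathcal I,\dv d)$ since arbitrary $\Delta$-filtered $M$ need not contain the exceptional $M'$ as a submodule. Filling this in is the actual content of the theorem, and it is where the paper spends Sections~\ref{gluing:section}--\ref{exceptional:section}: it first solves the linear-$\Delta$-support case by adapting \cite{BHRR}, then glues those pieces at admissible vertices \emph{in a prescribed pairing governed by the orders $\geq_{i_s}$} (Lemma~\ref{constructionexceptional}), and the wrong pairing simply does not produce a rigid module. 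Finally, a small point: your concern about bounding $\ext^2$-groups is moot, since $\Delta$-filtered modules over $k\tilde Q/\mathcal I$ have projective dimension at most one by Proposition~\ref{HVTheorem2}.
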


The idea of the proof consists firstly of constructing exceptional $\Delta$-filtered
modules with linear $\Delta$-support, and then of constructing
an exceptional $\Delta$-filtered module by gluing together in a specific way
these exceptional $\Delta$-filtered modules with linear $\Delta$-support.
This explicit construction has the advantage of providing an explicit Richardson element.
These constructions extend those used in \cite{BHRR}.

The paper is organized as follows. In Sections \ref{doublequiver} and \ref{doublequiverA},
we give some generalities of quasi-hereditary algebras arising from quotients of the
double of quivers. In particular, we show that any $\gl (\dv d)$-orbit in
$\rep_{\Delta}(\tilde{Q} , \mathcal{I},\dv d)$ meets a certain vector space $R^{\alpha}$.
In Section \ref{gluing:section}, we prove some properties of the gluing process.
We generalize in Section \ref{exceptionallinear:section}
certain results in \cite{BHRR} to exceptional $\Delta$-filtered modules
of linear $\Delta$-support.  We prove our main theorem on the existence of an open
$\gl (\dv d)$-orbit in $\rep_{\Delta}(\tilde{Q} , \mathcal{I},\dv d)$ in Section \ref{exceptional:section}.
After reviewing some basic properties of seaweed Lie algebras in Section \ref{seaweed:section},
we examine the case of seaweed Lie algebras in $\mathrm{gl}_{n}(\field )$
in Section \ref{typeA:section} where we explain how to associate to a seaweed Lie algebra
the double of a quiver of type $A$, and the correspondence between orbits. Section
\ref{others:section} is devoted to seaweed Lie algebras in other simple Lie algebras where
we give an example of a seaweed Lie algebra not admitting a
Richardson element.

\section{Quasi-hereditary algebras arising from quotients of the double of quivers}
\label{doublequiver}

In this section, we first introduce necessary notation on
representations of quivers. Then we recall the construction
of a class of quasi-hereditary algebras as quotients of the double of quivers.  We
also prove some preliminary results on this class of quasi-hereditary algebras.
Note that one of the important ingredients for
quasi-hereditary algebras is $\Delta$-filtered modules.
At the end of this section, we describe
varieties of $\Delta$-filtered modules of the quasi-hereditary algebras.

\subsection{Representations of quivers}

Let $Q=(Q_0, Q_1, s, t)$ be a quiver, where $Q_0$ is the set of
vertices, $Q_1$ is the set of arrows and $s$ and $t$ are two maps
from $Q_1$ to $Q_0$ given by sending each arrow in $Q_1$ to its
starting vertex and its terminating vertex, respectively. We assume
that both $Q_0$ and $Q_1$ are finite sets. A vertex $i\in Q_0$ is
called a sink vertex if there are no arrows in $Q_1$ starting from
$i$ and a source vertex if there are no arrows in $Q_1$ terminating
at $i$. We say that $i$ is an admissible vertex if it is either a
source or a sink vertex. We denote by $M=(\{M_i\}_{i\in
Q_0},\{M_\alpha\}_{\alpha\in Q_1} )$ a representation of $Q$, where
each $M_i$ is a $k$-vector space and $M_\alpha$ is a $k$-linear map
from $M_{s(\alpha)}$ to $M_{t(\alpha)}$. We view each vertex as a
trivial path in $Q$. A non-trivial path in $Q$ is a sequence of
arrows $\rho_1\cdots\rho_l$ satisfying $t(\rho_i)=s(\rho_{i-1})$ for
any $i\geq 2$. We denote by $kQ$ the path algebra of $Q$.  The path
algebra $kQ$ is a vector space with all the paths of $Q$ as basis
and  the multiplication of any two paths $\rho$ and $\gamma$ given
by

$$\rho\cdot \gamma = \left\{\begin{array}{ll} \rho\gamma & \mbox{if } t(\gamma)=s(\rho);\\
 0 &\mbox{ otherwise}. \end{array}\right.$$

Let $\mathcal{J}$ be an ideal of the path algebra $kQ$.
A representation $M=(\{M_i\}_{i\in
Q_0},\{M_\alpha\}_{\alpha\in Q_1} )$ of $Q$ is a representation of
$(Q, {\mathcal J})$ if the maps $M_\alpha$ satisfy the relations in
$\mathcal{J}$, that is, $\sum_{i_1, \cdots, i_l}c_{i_1, \cdots, i_l}
M_{\alpha_{i_1}}\cdots M_{\alpha_{i_l}}=0$ if $\sum_{i_1, \cdots,
i_l}c_{i_1, \cdots, i_l}{\alpha_{i_1}}\cdots {\alpha_{i_l}}\in
\mathcal{J}$. A morphism of two representations $N$ and $M$ of $(Q,
{\mathcal J})$ is a family of linear maps $(f_i: N_i\lra M_i)_{i\in
Q_0}$ such that $M_\alpha f_{s(\alpha)}= f_{t(\alpha)}N_\alpha$ for
any $\alpha\in Q_1$. It is well-known that the category of
representations of $(Q, {\mathcal J})$ is equivalent to the category
of left modules of $kQ/{\mathcal J}$.
We do not distinguish a representation of
$(Q, {\mathcal J})$ from the corresponding $ kQ/{\mathcal J}$-module.

Another way to understand representations of a quiver is through
the representation variety $\rep(Q, \mathcal{J}, \dv d)$
with dimension vector $\dv d=(d_i)_{i\in Q_0}$ defined as follows.

$$\rep(Q, {\mathcal J}, \dv d)=\left\{(M_\alpha)_{\alpha \in Q_1}\left|
\begin{array}{l}
\mbox{ each entry }M_\alpha \in
\hom(k^{d_{s(\alpha)}}, k^{d_{t(\alpha)}}) \\
\mbox{ and } (M_\alpha)_{\alpha \in Q_1} \mbox{ satisfies relations in } {\mathcal J}
\end{array}\right. \right\}.$$


Note that $\rep(Q, {\mathcal J}, \dv d)$ is a closed subvariety of
the affine space $\prod_{\alpha\in {Q}_1} \hom(k^{d_{s(\alpha)}},
k^{d_{t(\alpha)}})$. In the case where ${\mathcal J}=0$, we have
$\rep(Q, {\mathcal J}, \dv d)= \prod_{\alpha\in {Q}_1}
\hom(k^{d_{s(\alpha)}}, k^{d_{t(\alpha)}})$.  The group $\gl(\dv
d)=\prod_i \gl_{d_i}(k)$ acts on $\rep(Q, {\mathcal J}, \dv d)$ by
conjugation and there is a one-to-one correspondence between the
$\gl(\dv d)$-orbits in $\rep(Q, {\mathcal J}, \dv d)$  and
isomorphism classes of $kQ/{\mathcal J}$-modules with dimension
vector $\dv d$.

\subsection{Quasi-hereditary algebras and the double  of quivers}\label{q-hered}
We shall recall the definition of quasi-hereditary algebras, see \cite{CPS, KX}.

\begin{definition}

Let $D$ be a finite-dimensional $\field$-algebra with a set of
representatives of simple $D$-modules $\{L(i)\}_{i\in I}$. Then $D$, together with a partial
order $(I,\succeq)$, and a set of modules $\{\Delta(i)\}_{i\in I}$, is
called a quasi-hereditary algebra if,
\begin{itemize}

\item[(1)] each $\Delta(i)$ has simple top $L(i)$,

\item[(2)] we have $j\preceq i$ for any  composition factor $L(j)$ of the radical of $\Delta(i)$,

\item[(3)] the kernel of the projective cover $P(i)\lra \Delta(i)$ is
  filtered by the $\Delta(j)$ with $j\succeq i$.

\end{itemize}

The modules in $\Delta = \{\Delta(i)\}_{i\in I}$ are called Verma
modules. The subcategory of $D$-modules which are filtered by Verma
modules is denoted by $\mathcal{F}(\Delta)$. We say that $M$ is
$\Delta$-filtered if $M\in \mathcal{F}(\Delta)$.

\end{definition}

\begin{definition} A $\Delta$-filtration of a $\Delta$-filtered module $M$ is defined
to be a descending chain
of submodules, $M=M(1)\supset M(2)\supset \cdots \supset M(r)=0$,
such that  for any $i$, the module $M(i)/M(i+1)$ is isomorphic to $\Delta(j)$ for some $j$.
Each $\Delta(l)$ appearing as a
direct summand of $M(i)/M(i+1)$ for some $i$ is called a $\Delta$-composition
factor of $M$. The $\Delta$-length of $M$ is defined to be the number of
$\Delta$-composition factors of $M$.
\end{definition}

From now on, we assume that the quiver $Q$ has no oriented cycles.
We denote by $\tilde{Q}$ the double of $Q$,
that is $\tilde{Q}_0=Q_0$ and $\tilde{Q}_1=Q_1\cup \{\alpha^*|\alpha^* \mbox{ is a reverse arrow of } \alpha\in Q_1\}$. Denote
by $k\tilde{Q}$ the path algebra of $\tilde{Q}$ . Let $\mathcal{I}$ be the ideal
of $k\tilde{Q}$ generated by the following elements

\begin{itemize}
\item[(1)] $\alpha^*\alpha-\sum_{\gamma\in Q_1, s(\alpha)=t(\gamma)}\gamma\gamma^*$
for any $\alpha\in Q_1$.

\item[(2)] $\beta^*\alpha$, where $\beta\not= \alpha\in Q_1$ with $t(\alpha)=t(\beta)$.
\end{itemize}

Note that the inclusion of quivers $Q\subseteq \tilde{Q}$, makes $kQ$ a
subalgebra of $k\tilde{Q}/\mathcal{I}$.
Also, mapping each $\alpha^*\in \tilde{Q}_1\minus Q_1$ to zero gives us a
surjective map of algebras $k\tilde{Q}/\mathcal{I}\lra kQ$.
We view any $k\tilde{Q}/\mathcal{I}$-module as an $kQ$-module and
any $kQ$-module as a $k \tilde{Q}/\mathcal{I}$-module via these two algebra homomorphisms.

Let $\Delta(i)$ be an indecomposable projective $kQ$-module with
simple top $L(i)$. We define a binary relation $\succeq$ on the vertex set $Q_0$ by
$i\succeq j$ if $\Delta(j)\subseteq \Delta(i)$ as $kQ$-modules. Since $kQ$ is
finite-dimensional, we see that $\succeq$ is a partial order on $Q_0$.
We let $P(i)$ be a projective $k\tilde{Q}/\mathcal{I}$-module with simple top $L(i)$.

\begin{proposition}\label{HVTheorem1}\cite{DX}
The modules $\Delta(i)$ and the partial order $\succeq$ give $k\tilde{Q}/\mathcal{I}$ the
structure of a quasi-hereditary algebra. Moreover $gldim \;k\tilde{Q}/\mathcal{I} \leq 2$.
\end{proposition}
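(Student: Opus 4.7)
The plan is to verify each of the three defining axioms of a quasi-hereditary algebra in turn, and then to exhibit an explicit length-one projective resolution of each $\Delta(i)$. Write $A = k\tilde Q/\mathcal I$. The algebra surjection $\pi\colon A\to kQ$ killing every $\alpha^*$ identifies $\Delta(i)$ with the pull-back of the indecomposable $kQ$-projective $kQe_i$, whose top is the simple $L(i)$, so axiom (1) is immediate. For axiom (2) I would use acyclicity of $Q$: each path $p\colon i\to j$ in $Q$ yields an embedding $\Delta(j)=kQe_j\hookrightarrow kQe_i=\Delta(i)$ of $kQ$-modules (hence of $A$-modules), and the composition factors $L(j)$ of $\Delta(i)$ are precisely those indexed by such paths. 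Thus every composition factor of the radical of $\Delta(i)$ satisfies $j\prec i$, and $\succeq$ is a bona fide partial order.

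The technical heart of the argument is a normal form for $A$. Relations (1) and (2) together control every occurrence of a starred arrow immediately to the left of a non-starred one: (2) kills such a monomial whenever the two arrows are not mutual duals, and (1) rewrites $\alpha^*\alpha$ as $\sum_{\gamma}\gamma\gamma^*$, pushing the remaining star one step further to the right. Iterating, I would show that every element of $e_jAe_i$ is a $k$-linear combination of reduced monomials $pq^*$, where $p,q$ are paths in $Q$ satisfying $s(p)=s(q)$, $t(p)=j$, and $t(q)=i$. The delicate point, and arguably the main obstacle, is proving that these reduced monomials are actually $k$-linearly independent rather than merely spanning; the cleanest route is to cast the rewriting as a confluent noetherian reduction and then to match the resulting upper bound on $\dim e_jAe_i$ with a lower bound coming from an explicit faithful representation of $A$.

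Given this normal form, I would filter $P(i)=Ae_i$ by the submodules $P(i)^{\geq n}$ spanned by reduced monomials $pq^*$ with $\ell(q)\geq n$. Left multiplication by a non-starred arrow preserves $\ell(q)$, while left multiplication by a starred arrow, once reduced, either kills the term or strictly increases $\ell(q)$. Hence each $P(i)^{\geq n}$ is an $A$-submodule, all $\alpha^*$'s act as zero on the successive subquotients, and a direct basis count gives an $A$-module isomorphism
\[
P(i)^{\geq n}/P(i)^{\geq n+1}\;\cong\;\bigoplus_{\substack{q\colon j\to i\text{ in }Q\\ \ell(q)=n}}\Delta(j).
\]
The $n=0$ piece recovers $\Delta(i)$, and every $\Delta(j)$ at higher level comes from a non-trivial path $j\to i$, forcing $\Delta(i)\subsetneq\Delta(j)$ and $j\succ i$. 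This establishes axiom (3).

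For the global dimension bound I would consider the map $\partial\colon\bigoplus_{\alpha\colon t(\alpha)=i}P(s(\alpha))\to P(i)$ that sends the canonical generator of the $\alpha$-summand to $\alpha^*\in P(i)$; its image is visibly $P(i)^{\geq 1}=\ker(P(i)\to\Delta(i))$. Injectivity of $\partial$ follows once more from the normal form: any path $r\colon j\to i$ of length at least one factors uniquely as $r=\alpha q$ with $\alpha$ the arrow at the $i$-end of $r$, so the reduced monomials in the image are indexed without repetition by triples $(p,\alpha,q)$. Hence
\[
0\to\bigoplus_{\alpha\colon t(\alpha)=i}P(s(\alpha))\longrightarrow P(i)\longrightarrow\Delta(i)\to 0
\]
is a projective resolution of $\Delta(i)$, giving $\mathrm{pd}_A\,\Delta(i)\leq 1$. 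Combined with the short exact sequence $0\to\mathrm{rad}\,\Delta(i)\to\Delta(i)\to L(i)\to 0$ and the $kQ$-level decomposition $\mathrm{rad}\,\Delta(i)=\bigoplus_{\alpha\colon s(\alpha)=i}\Delta(t(\alpha))$, this yields $\mathrm{pd}_A\,L(i)\leq 2$, and hence $\mathrm{gldim}\,A\leq 2$.
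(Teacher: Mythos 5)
The paper does not supply its own proof of this proposition: it is stated with a citation to \cite{DX}, and the only place the paper touches the ingredients you need is Example \ref{example1}(3)--(5) and Remark \ref{basisremark}, which describe (citing \cite{HV}) exactly the $pq^*$-basis of $P(i)$ that your argument hinges on. So your proposal is a genuine reconstruction rather than a retelling, and its overall architecture --- pull back the $kQ$-projectives to get the $\Delta(i)$, establish the reduced-word normal form $pq^*$, filter $P(i)$ by the length of the starred part to read off the $\Delta$-filtration, and then exhibit the length-one resolution $0\to\bigoplus_{\alpha\colon t(\alpha)=i}P(s(\alpha))\to P(i)\to\Delta(i)\to 0$ to bound projective and hence global dimension --- is the standard one and is correct. (As a side remark, your short exact sequence with $\Delta(i)$ as cokernel is the right one; the sequence written in Example \ref{example1}(5) of the paper, which has $L(i)$ as cokernel with the predecessors' projectives as kernel, does not even balance dimensions for $i=2,3$ in that example, so treat that line as a slip.)

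The one real gap is the one you flag yourself: you never prove that the reduced monomials $pq^*$ are linearly independent. Everything downstream depends on this --- the definition of $P(i)^{\geq n}$ as a well-defined subspace with the stated subquotients, the identification $P(i)^{\geq n}/P(i)^{\geq n+1}\cong\bigoplus_{\ell(q)=n}\Delta(s(q))$, and especially the injectivity of $\partial$, which you justify by a bijection of basis elements. Your two suggested routes (a diamond-lemma/confluence argument, or matching the span upper bound against the dimension of an explicit faithful module) are both viable, and the second is probably cleaner here: one can build the candidate $P(i)$ directly as a vector space on the reduced words, verify the $A$-action satisfies relations (1)--(2), observe it is cyclic on $e_i$ with simple top $L(i)$, and compare dimensions with the spanning estimate. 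But as written this step is a plan, not a proof, and it is the load-bearing one. Filling it in would make the proposal complete and essentially equivalent to the treatment in \cite{DX,HV}.
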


The following result is easy and well-known to experts.

\begin{proposition} \label{HVTheorem2}
The following are equivalent for a representation
$M$
of $(\tilde{Q},\mathcal{I})$.
\begin{itemize}
\item[(i)] The representation $M$ is $\Delta$-filtered.
\item[(ii)] The projective dimension of $M$ is at most one.
\item[(iii)] The representation $M$ is projective as a $kQ$-module.
\item[(iv)] For any arrow $\alpha\in Q_1$, the $\field$-linear map $M_{\alpha}$ is injective and
 $$\im(M_{\alpha})\bigcap \sum_{\begin{smallmatrix}\beta\in Q_1\minus\{\alpha\}\\ t(\beta)=t(\alpha)\end{smallmatrix}}\im(M_{\beta})=0.$$
\end{itemize}
\end{proposition}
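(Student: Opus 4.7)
The plan is to prove (i) $\Rightarrow$ (iii) $\Leftrightarrow$ (iv) $\Rightarrow$ (i) and (i) $\Leftrightarrow$ (ii), with the last equivalence following from general quasi-hereditary theory together with $\mathrm{gldim}\, k\tilde{Q}/\mathcal{I} \leq 2$ (Proposition \ref{HVTheorem1}). Throughout, set $D = k\tilde{Q}/\mathcal{I}$.

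For (i) $\Rightarrow$ (iii), a $\Delta$-filtration of $M$ as a $D$-module is in particular a $kQ$-module filtration (via $kQ \hookrightarrow D$) whose factors $\Delta(j)$ are $kQ$-projective; since $kQ$ is hereditary, every such extension splits, and induction on the $\Delta$-length yields the $kQ$-projectivity of $M$. For (iii) $\Leftrightarrow$ (iv), this is the standard description of projective modules over a hereditary path algebra: in $M \cong \bigoplus_j \Delta(j)^{n_j}$ each $M_\alpha$ is injective and the images at a common target are in direct sum (paths beginning with distinct arrows are linearly independent); conversely, given (iv), pick complements $M_j^{\mathrm{top}}$ of $\sum_{\alpha:\,t(\alpha)=j}\im M_\alpha$ in each $M_j$ and construct $M \cong \bigoplus_j \Delta(j)^{\dim M_j^{\mathrm{top}}}$ by induction along a linear extension of the orientation of $Q$. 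For (i) $\Leftrightarrow$ (ii), general theory gives $\mathrm{pd}_D \Delta(j) \leq \mathrm{gldim}\,D - 1 \leq 1$, so $\mathcal{F}(\Delta)$ is contained in the modules of projective dimension at most one, and in a quasi-hereditary algebra of global dimension two the reverse inclusion follows from the Ext-vanishing characterisation of $\mathcal{F}(\Delta)$.

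The core of the proof is (iii) $\Rightarrow$ (i), by induction on $\dim M$. Pick a source $i$ of the subquiver of $Q$ spanned by $\supp(M)$; equivalently, $i$ is a $\succeq$-maximal element of $\supp(M)$. Choose a nonzero $v \in M_i$ and set $N = kQ \cdot v$. Since $i$ is a source of $\supp(M)$, $\Delta(j)_i = 0$ for every $j \neq i$ appearing in the $kQ$-decomposition of $M$, so $v$ determines a diagonal embedding $\Delta(i) \hookrightarrow \Delta(i)^{n_i}$ and $N \cong \Delta(i)$ as a $kQ$-module. It remains to verify that $N$ is $D$-stable, i.e. $M_{\alpha^*}(N_{t(\alpha)}) \subseteq N_{s(\alpha)}$. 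For a path $p = \beta_k \cdots \beta_1$ from $i$ to $t(\alpha)$ in $Q$, the product $\alpha^* p \in D$ simplifies using the relations in $\mathcal{I}$: if $\beta_k \neq \alpha$ then $\alpha^*\beta_k = 0$ by relation (2); otherwise iterating relations (1) and (2) peels off matching pairs and rewrites $\alpha^* p$ as $\beta_{k-1}\beta_{k-2} \cdots \beta_1 \sum_{\gamma_0} \gamma_0 \gamma_0^*$, the sum ranging over arrows $\gamma_0$ with $t(\gamma_0) = i$. Each such $\gamma_0$ has $s(\gamma_0) \notin \supp(M)$ by maximality of $i$, so $M_{\gamma_0^*}(v) = 0$ and hence $M_{\alpha^*}(M_p(v)) = 0$. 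Thus $N$ is a $D$-submodule on which every $\alpha^*$ acts as zero, so $N \cong \Delta(i)$ as a $D$-module. The quotient $M/N \cong \Delta(i)^{n_i-1} \oplus \bigoplus_{j \neq i}\Delta(j)^{n_j}$ is again $kQ$-projective, and the induction hypothesis applied to $M/N$ completes the $\Delta$-filtration of $M$.

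The principal obstacle is the verification in (iii) $\Rightarrow$ (i) that the $kQ$-submodule $N = kQ \cdot v$ is stable under every $\alpha^*$; this rests on the calculation above, where the relations in $\mathcal{I}$ together with the maximality of $i$ in $\supp(M)$ conspire to force the $\alpha^*$-action on $N$ to vanish.
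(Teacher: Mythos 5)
Your core argument for (iii) $\Rightarrow$ (i) is correct and is a genuinely different route from the paper, which simply cites Hille--Vossieck \cite{HV} for the equivalence of (i)--(iii) and declares (iii) $\Leftrightarrow$ (iv) clear. Your explicit computation --- that for a $\succeq$-maximal $i$ in $\supp(M)$, iterated use of relations (1) and (2) rewrites $\alpha^* p$ as $\beta_{k-1}\cdots\beta_1\sum_{\gamma_0}\gamma_0\gamma_0^*$ with the $\gamma_0^*$ landing outside $\supp(M)$ --- is clean and correct, and (i) $\Rightarrow$ (iii), (iii) $\Leftrightarrow$ (iv) are fine as well.

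However, the treatment of (i) $\Leftrightarrow$ (ii) contains two genuine gaps. First, $\mathrm{pd}_D\,\Delta(j)\leq\mathrm{gldim}\,D-1$ is \emph{not} a fact of ``general theory'' for quasi-hereditary algebras: for $\Lambda=k[1\to 2\to 3]/\mathrm{rad}^2$ with the order $3\succ 2\succ 1$ one has $\mathrm{gldim}\,\Lambda=2$, $\Delta(i)=L(i)$, and $\mathrm{pd}\,\Delta(1)=\mathrm{pd}\,L(1)=2$. What saves (i) $\Rightarrow$ (ii) here is the specific structure of $D$: as in Example \ref{example1}(5) / Remark \ref{basisremark}, the kernel of $P(j)\to\Delta(j)$ is $\bigoplus P(j')$ over predecessors $j'$ of $j$, hence projective, so $\mathrm{pd}_D\,\Delta(j)\leq 1$ for this $D$; you should invoke that, not a general bound. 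Second, and more seriously, (ii) $\Rightarrow$ (i) does not ``follow from the Ext-vanishing characterisation of $\mathcal{F}(\Delta)$'' in a quasi-hereditary algebra of global dimension two. The characterisation states $M\in\mathcal{F}(\Delta)$ iff $\ext^1(M,\nabla(j))=0$ for all $j$; having $\mathrm{pd}\,M\leq 1$ only kills $\ext^{\geq 2}$, not $\ext^1$. Concretely, take $\Lambda=k[1\to 2\to 3]/\mathrm{rad}^2$ with the order $1\succ 2\succ 3$: this is quasi-hereditary of global dimension $2$ with $\Delta(i)=P(i)$, so $\mathcal{F}(\Delta)=\mathrm{add}(\Lambda)$, yet $L(2)$ has projective dimension $1$ and is not projective (indeed $\ext^1(L(2),\nabla(3))=\ext^1(L(2),L(3))\neq 0$). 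So (ii) $\Rightarrow$ (i) fails for general quasi-hereditary algebras of global dimension two and needs an argument specific to $D$ --- e.g.\ proving (ii) $\Rightarrow$ (iv) or (ii) $\Rightarrow$ (iii) directly, or falling back on the cited reference \cite{HV}, as the paper does.
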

A proof of the equivalence of (i)-(iii) of Proposition
\ref{HVTheorem2} can be found in \cite{HV} and the equivalence of
(iii) and (iv) is clear for any quiver.

\begin{definition}
Let $M$ be a $\Delta$-filtered module.
\begin{itemize}
\item[(1)] The
$\Delta$-dimension vector of $M$, denoted by
$\underline{\dim}_\Delta(M)$,  is the dimension vector with its i-th
entry $(\underline{\dim}_\Delta(M))_i$ the multiplicity of $\Delta(i)$
as a $\Delta$-composition factor in a $\Delta$-filtration of $M$.
\item[(2)]The $\Delta$-support of $M$, denote by $\supp_\Delta(M)$, is the
the full subgraph of $Q$ with the set of
vertices $\{i\in Q_0|  (\underline{\dim}_\Delta(M))_i>0\}$.
\end{itemize}
\end{definition}

Since any $\Delta$-filtered module $M$ is projective as a
$kQ$-module, there is a unique decomposition, up to isomorphism, of
$M$ as a $kQ$-module into a direct sum of projective $kQ$-modules.
So the  $\Delta$-dimension vector of a $\Delta$-filtered module is
well-defined. We will also need the usual support of $M$, denoted by
$\supp(M)$, defined via the ordinary dimension vector
$\underline{\dim}(M)$. We have $\supp_\Delta(M)\subseteq \supp(M)$
with strict inclusion in general. Given a non-zero vector $\dv d \in
\mathbb{N}^{Q_{0}}$, we denote by $\supp(\dv d)$ the support of $\dv d$,
which is the full subquiver of $Q$ with the set of vertices
$\supp(\dv d)_0=\{i|d_i>0\}$.

We need some properties of the $\Delta$-filtered modules.
By the definition of  $\mathcal{I}$, it is not difficult to see that the
$k\tilde{Q}/\mathcal{I}$-module $\Delta(i)$ is projective for a source $i$,
that is $\ext^1_{k\tilde{Q}/\mathcal{I}}(\Delta(i),-)=0$. A similar, but weaker,
property holds for sinks.

\begin{lemma} \label{extinjective}
If $M$ is $\Delta$-filtered and $i$ is a sink in $Q$, then $\ext^1_{k\tilde{Q}/\mathcal{I}}(M,\Delta(i))=0$.
\end{lemma}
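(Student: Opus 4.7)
The plan is to split any short exact sequence
\[ 0 \to \Delta(i) \to E \to M \to 0 \]
directly, using the explicit form of the relations in $\mathcal{I}$ together with characterization (iv) of Proposition~\ref{HVTheorem2}.

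\medskip

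Since $i$ is a sink of $Q$, no non-trivial path of $Q$ starts at $i$, so $\Delta(i)$ coincides with $L(i)$, the simple module supported at the vertex $i$. In the extension above, this forces $E_j = M_j$ for $j \neq i$ and $E_i = M_i \oplus \field$, and the module structure of $E$ is then determined, for each arrow $\alpha \in Q_1$ ending at $i$, by a linear form $f_\alpha \in M_{s(\alpha)}^*$ encoding $E_\alpha(v) = (M_\alpha v, f_\alpha(v))$ together with a vector $g_\alpha \in M_{s(\alpha)}$ encoding $E_{\alpha^*}(v, c) = M_{\alpha^*}(v) + c\, g_\alpha$.

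\medskip

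I then impose the relations of $\mathcal{I}$. Because $i$ is a sink, the only compositions of structure maps in $E$ that differ from those in $M$ are the ones passing through $i$. The relations $\beta^* \alpha = 0$ for distinct arrows $\alpha, \beta$ ending at $i$ translate into $f_\alpha \otimes g_\beta = 0$, and applying $\alpha^* \alpha - \sum_{\gamma} \gamma \gamma^* = 0$ to an $\alpha$ ending at $i$ gives $f_\alpha \otimes g_\alpha = 0$ (the sum $\sum_\gamma \gamma\gamma^*$ is entirely on vertices other than $i$ and so contributes exactly as in $M$). Since a rank-one tensor vanishes only when one of its factors does, these constraints together force either $f_\alpha = 0$ for \emph{every} arrow ending at $i$, or $g_\beta = 0$ for every such arrow.

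\medskip

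It remains to split in each case. If all $f_\alpha$ vanish, one checks that the obvious inclusion $s_i(v) = (v, 0)$, $s_j = \mathrm{id}$ for $j \neq i$, is already a morphism of $(\tilde{Q}, \mathcal{I})$-representations and hence a section. If instead all $g_\beta$ vanish, I look for a section of the form $s_i(v) = (v, \phi(v))$, which amounts to finding $\phi \in M_i^*$ satisfying $\phi \circ M_\alpha = f_\alpha$ for every arrow $\alpha$ ending at $i$. This is exactly where the $\Delta$-filtered hypothesis on $M$ enters: by Proposition~\ref{HVTheorem2}(iv) the maps $M_\alpha$ are injective with linearly independent images in $M_i$, so such a $\phi$ can be prescribed consistently on $\bigoplus_\alpha \im M_\alpha$ and then extended arbitrarily to $M_i$. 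The main point of care is the middle step — correctly bookkeeping how each of the two families of relations in $\mathcal{I}$ constrains $(f_\alpha, g_\alpha)$, and observing that together they force the $f$'s and the $g$'s to be mutually exclusive.
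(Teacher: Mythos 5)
Your approach is genuinely different from the paper's: the paper observes that the top of the syzygy $\Omega\Delta(j)$ never contains $L(i)$ for $i$ a sink, concludes $\ext^1(\Delta(j),\Delta(i))=0$ for all $j$, and then does induction along a $\Delta$-filtration of $M$; you instead take an arbitrary extension and split it by hand using the linear-algebra characterization from Proposition~\ref{HVTheorem2}(iv). The direct approach does work here, but your middle step contains a conceptual error worth flagging.

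The parameters $g_\alpha$ you introduce do not actually exist: since $\Delta(i)=L(i)$ sits inside $E$ as a submodule on which every arrow of $\tilde{Q}$ acts by zero, the second component of $E_i$ lies in $\ker E_{\alpha^*}$ for every reverse arrow $\alpha^*$ out of $i$; equivalently, $E_{\alpha^*}\iota_i=\iota_{t(\alpha^*)}\Delta(i)_{\alpha^*}=0$ forces $g_\alpha=0$ before any relation in $\mathcal{I}$ is imposed. (And the projection $\pi_{s(\alpha)}$ being an isomorphism already determines $E_{\alpha^*}$ uniquely from $M_{\alpha^*}$.) So the relation analysis producing $f_\alpha\otimes g_\beta=0$ is correct but vacuous, and the ``either all $f$'s or all $g$'s vanish'' dichotomy never does any work — you are always in your second case. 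The part of your argument that actually carries the proof, and is correct, is the construction of $\phi\in M_i^*$ with $\phi\circ M_\alpha=f_\alpha$ for all $\alpha$ ending at $i$, which is possible precisely because $\Delta$-filteredness (via Proposition~\ref{HVTheorem2}(iv)) makes the images $\im M_\alpha$ in $M_i$ into a direct sum; this is where the hypothesis genuinely enters. If you drop the $g_\alpha$'s and the casework, the remaining argument is clean, correct, and arguably more elementary than the paper's syzygy-plus-devissage proof, at the cost of being a pointwise computation rather than a structural one.
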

\begin{proof}
Observe that $\hom_{k\tilde{Q}/\mathcal{I}}(\Omega\Delta(j),\Delta(i))=0$ for all $j$, where
$\Omega\Delta(j)$ denotes the syzygy of $\Delta(j)$ as a $k\tilde{Q}/\mathcal{I}$-module. Hence
$\ext^1_{k\tilde{Q}/\mathcal{I}}(\Delta(j),\Delta(i))=0$. The lemma follows by using long exact
sequences in homology and induction on the length of
a $\Delta$-filtration of $M$.
\end{proof}
\begin{lemma} \label{subsource}
Let $i$ be a source in $Q$ and let $M$ be $\Delta$-filtered with
$d_i=(\underline{\dim}_\Delta(M))_i>0$, then there exists a unique submodule
$\Delta(i)^{d_i}\subseteq M$. Moreover the quotient module $M/\Delta(i)^{d_i}$ is
$\Delta$-filtered.
\end{lemma}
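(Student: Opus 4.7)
The plan is to exhibit $\Delta(i)^{d_i}$ as the bottom of a suitably rearranged $\Delta$-filtration of $M$, and then to recognise this submodule intrinsically as the trace of $\Delta(i)$ in $M$ in order to obtain uniqueness.

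Two preliminary observations are needed. First, as recalled in the paragraph preceding Lemma~\ref{extinjective}, $\Delta(i)$ is projective as a $k\tilde{Q}/\mathcal{I}$-module because $i$ is a source, so $\ext^{1}_{k\tilde{Q}/\mathcal{I}}(\Delta(i),-)=0$. Second, $(\Delta(j))_{i}=0$ for every $j\neq i$: the $kQ$-module $\Delta(j)=kQe_{j}$ has as its component at the vertex $i$ the span of paths in $Q$ from $j$ to $i$, and there are none when $i$ is a source and $j\neq i$. Consequently $\dim M_{i}=d_{i}$.

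Starting from any $\Delta$-filtration $M=M(1)\supset M(2)\supset\cdots\supset M(r)=0$ with $M(k)/M(k+1)\cong\Delta(j_{k})$, I would push the $d_{i}$ copies of $\Delta(i)$ to the bottom. Whenever $j_{k}=i$ and $j_{k+1}\neq i$, the short exact sequence
$$0\lra \Delta(j_{k+1})\lra M(k)/M(k+2)\lra \Delta(i)\lra 0$$
splits by projectivity of $\Delta(i)$, and replacing $M(k+1)$ by the preimage in $M(k)$ of the $\Delta(i)$-summand swaps these two successive factors while preserving the filtration property. Iterating produces a filtration whose bottom piece $N$ has every successive quotient equal to $\Delta(i)$. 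Since $\ext^{1}(\Delta(i),\Delta(i))=0$, induction on $d_{i}$ yields $N\cong \Delta(i)^{d_{i}}$, and $M/N$ inherits a $\Delta$-filtration from the remaining factors.

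For uniqueness, I would identify $N$ with the trace $T$ of $\Delta(i)$ in $M$, that is, the submodule generated by the images of all homomorphisms $\Delta(i)\to M$. Since $\Delta(i)=P(i)$, we have $\hom_{k\tilde{Q}/\mathcal{I}}(\Delta(i),M)=M_{i}$, so $T$ is the submodule of $M$ generated by $M_{i}$. Any submodule $N'\cong\Delta(i)^{d_{i}}$ is a sum of images of $\Delta(i)\to M$, whence $N'\subseteq T$; conversely, $\dim N'_{i}=d_{i}=\dim M_{i}$ forces $N'_{i}=M_{i}$, so $N'$ contains the generators of $T$ and $N'\supseteq T$. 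Hence $N'=T$ is uniquely determined. The only delicate point is the rearrangement step, and it is entirely routine once the projectivity of $\Delta(i)$ is in hand.
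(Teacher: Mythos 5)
Your proof is correct and takes a genuinely different route from the paper's. The paper argues by induction on the length of a $\Delta$-filtration working from the top: if the top factor $M/M_{t-1}$ is $\Delta(i)$ it splits off by projectivity, and otherwise one applies the inductive hypothesis to $M_{t-1}$ and forms a pushout $Y=M/\Delta(i)^{d_i}$ which is $\Delta$-filtered because it is an extension of $M/M_{t-1}$ by $M_{t-1}/\Delta(i)^{d_i}$; uniqueness is left essentially implicit. You instead perform a "bubble-sort" rearrangement of the filtration, using projectivity of $\Delta(i)$ to split and swap an adjacent pair $(\Delta(i),\Delta(j))$ with $j\neq i$, until all $\Delta(i)$-factors sit at the bottom, and then make the uniqueness completely explicit by identifying the resulting bottom piece with the trace of $\Delta(i)$ in $M$, i.e.\ the submodule generated by $M_i$ (using $\Delta(i)=P(i)$ and $\hom(P(i),M)\cong M_i$, plus the observation that $(\Delta(j))_i=0$ for $j\neq i$ since $i$ is a source, so $\dim M_i=d_i$). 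Both arguments rest on the same engine — projectivity of $\Delta(i)$ — but your rearrangement-plus-trace argument is more transparent about uniqueness, while the paper's pushout argument is more economical in that it simultaneously delivers the $\Delta$-filtration on the quotient without needing to re-sort the filtration factors. One small presentational remark: your swapping step should explicitly note that the new subquotient $M(k+1)''/M(k+2)$ is the $\Delta(i)$-summand (not the $\Delta(j_{k+1})$-summand), so the $\Delta(i)$ moves \emph{down} the chain toward $0$; as written the phrase "preimage of the $\Delta(i)$-summand" does accomplish this, but the direction of the move is worth spelling out.
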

\begin{proof}
We use induction on the length of a $\Delta$-filtration $$0
\subseteq M_1 \subseteq \dots \subseteq M_t=M$$ of $M$. If $t=1$,
then $M=\Delta(i)$, and we are done. If $t>1$, we have a short exact
sequence 
$$
\begin{CD}
0 @>>> M_{t-1} @>>> M @>>> M/M_{t-1} @>>> 0
\end{CD}
$$ 
If
$M/M_{t-1}\cong \Delta(i)$, then $M\cong M_{t-1}\oplus \Delta(i)$,
since $\Delta(i)$ is projective, and by induction we are done.
Otherwise, by induction we have the pushout $Y$ in the following diagram.

$$
\xymatrix{& 0 \ar[d] & 0 \ar[d] & & \\ & \Delta(i)^{d_i} \ar[d]^{g} \ar@{=}[r] &
  \Delta(i)^{d_i} \ar[d] & & \\ 0 \ar[r] & M_{t-1} \ar[d] \ar[r] & M
  \ar[r] \ar[d]
  & M/M_{t-1} \ar@{=}[d] \ar[r] & 0 \\ 0 \ar[r] & \cok(g) \ar[d] \ar[r] &
  Y \ar[d] \ar[r] &
  M/M_{t-1} \ar[r] & 0 \\ & 0 & 0  }
$$

\noindent Here $\cok(g)$ is $\Delta$-filtered. This shows that $Y$ is
$\Delta$-filtered. This proves the lemma.
\end{proof}



\begin{lemma} \label{factorsink}
Let $i$ be a sink in $Q$ and let $M$ be $\Delta$-filtered with
$d_i=(\underline{\dim}_\Delta(M))_i>0$, then there exists a unique quotient module which is
isomorphic to $\Delta(i)^{d_i}$. Moreover the submodule $Y$, satisfying
 $M/Y\cong \Delta(i)^{d_i}$, is
$\Delta$-filtered.
\end{lemma}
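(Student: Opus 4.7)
The plan is to dualize the argument used for Lemma~\ref{subsource}, with the $\ext^{1}$-vanishing from Lemma~\ref{extinjective} playing the role that the projectivity of $\Delta(i)$ at a source played there. I induct on the $\Delta$-length $t$ of a $\Delta$-filtration $0 \subseteq M_{1} \subseteq \cdots \subseteq M_{t} = M$. The base case $t = 1$ forces $M \cong \Delta(i)$ (since $d_{i}>0$), and one takes $Y = 0$.

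For $t > 1$, let $M' = M_{t-1}$, so that $0 \to M' \to M \to \Delta(j) \to 0$ is exact for some $j \in Q_{0}$. Set $e = (\underline{\dim}_{\Delta} M')_{i}$, which equals $d_{i}$ if $j \neq i$ and $d_{i}-1$ if $j = i$. If $e > 0$, the induction hypothesis provides a $\Delta$-filtered submodule $Y' \subseteq M'$ with $M'/Y' \cong \Delta(i)^{e}$; if $e = 0$, take $Y' = M'$. The quotient $M/Y'$ then fits into
$$
0 \lra \Delta(i)^{e} \lra M/Y' \lra \Delta(j) \lra 0 .
$$
By Lemma~\ref{extinjective} applied to the $\Delta$-filtered module $\Delta(j)$, we have $\ext^{1}_{k\tilde{Q}/\mathcal{I}}(\Delta(j), \Delta(i)^{e}) = 0$, so this sequence splits. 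If $j = i$, the splitting gives $M/Y' \cong \Delta(i)^{d_{i}}$ and we set $Y = Y'$. If $j \neq i$, we let $Y$ be the preimage in $M$ of the $\Delta(j)$-summand of $M/Y' \cong \Delta(i)^{d_{i}} \oplus \Delta(j)$; then $M/Y \cong \Delta(i)^{d_{i}}$, and the exact sequence $0 \to Y' \to Y \to \Delta(j) \to 0$ shows that $Y$ is $\Delta$-filtered.

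For uniqueness, note that since $i$ is a sink, $\Delta(i) = L(i)$ is simple, so every surjection $M \to \Delta(i)^{d_{i}}$ factors through the top $M/\rad(M)$. Using the decomposition $M \cong \bigoplus_{j} \Delta(j)^{d_{j}}$ as a $kQ$-module together with Proposition~\ref{HVTheorem2}(iv) to identify $\rad(M)_{i}$ (all arrows of $\tilde{Q}$ into a sink $i$ come from $Q$, and their images in $M_{i}$ form a direct sum coinciding with $\bigoplus_{j \neq i}(\Delta(j))_{i}^{d_{j}}$), one checks that $(M/\rad(M))_{i}$ has dimension exactly $d_{i}$. Hence any two surjections $M \to \Delta(i)^{d_{i}}$ differ by an automorphism of the target and therefore share a common kernel, which must be the $Y$ constructed above.

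The only substantive ingredient is the Ext-vanishing supplied by Lemma~\ref{extinjective}; once that is granted, the main argument is formal diagram chasing dual to Lemma~\ref{subsource}, and the likely subtlety lies only in the bookkeeping that distinguishes the two cases $j = i$ and $j \neq i$.
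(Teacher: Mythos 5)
Your proof is correct, and it does something slightly different from the paper's. The paper's own proof of Lemma~\ref{factorsink} peels off the \emph{bottom} Verma factor $M_1$ of the filtration and, when $M_1 \neq \Delta(i)$, applies induction to $M/M_1$ to get a surjection $g : M/M_1 \to \Delta(i)^{d_i}$; the kernel $Y$ of the composite $M \twoheadrightarrow M/M_1 \xrightarrow{g} \Delta(i)^{d_i}$ is then $\Delta$-filtered because it is an extension of $\ker(g)$ by $M_1$. This is the strict dual of the proof of Lemma~\ref{subsource} (pushout becomes pullback, socle becomes top), and Lemma~\ref{extinjective} is only invoked in the degenerate case $M_1 \cong \Delta(i)$. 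You instead peel off the \emph{top} Verma factor $M/M_{t-1} \cong \Delta(j)$ (so you reuse the induction direction of Lemma~\ref{subsource} rather than dualizing it), apply induction to $M_{t-1}$, and then need the Ext-vanishing $\ext^1_{\bf D}(\Delta(j),\Delta(i)^e) = 0$ from Lemma~\ref{extinjective} in every inductive step to split the extension $0 \to \Delta(i)^e \to M/Y' \to \Delta(j) \to 0$ and recover the desired surjection. Both routes work; the paper's is a cleaner formal dual, while yours relies more heavily on Lemma~\ref{extinjective} but avoids the pullback diagram chase. You also supply an explicit uniqueness argument (surjections to the simple $\Delta(i)=L(i)$ factor through $M/\rad M$, whose $i$-component has dimension exactly $d_i$ by the direct-sum condition in Proposition~\ref{HVTheorem2}(iv)); the paper leaves uniqueness implicit, so this is a welcome addition and your dimension count is correct since every arrow of $\tilde{Q}$ terminating at the sink $i$ lies in $Q$.
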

\begin{proof}
We use induction on the length of a $\Delta$-filtration $$0
\subseteq M_1 \subseteq\dots \subseteq M_t=M$$ of $M$. If $t=1$,
then $M=\Delta(i)$ and we are done. If $t>1$, then we have a short
exact sequence
$$
\begin{CD}
0 @>>>  M_1 @>>> M @>>> M/M_1 @>>> 0
\end{CD}
$$ 
If $M_1=\Delta(i)$, then
$M\cong M/M_1 \oplus \Delta(i)$ by Lemma \ref{extinjective}. If not,
then by induction we get the pullback $Y$ in the following diagram.

    $$\xymatrix{& &
   0 \ar[d] & 0 \ar[d] & \\ 0 \ar[r] & M_1 \ar@{=}[d]
    \ar[r] & Y\ar[d] \ar[r] & \ker(g) \ar[d]
    \ar[r] & 0 \\ 0 \ar[r] & M_1 \ar[r] & M \ar[r] \ar[d] & M/M_1 \ar[d]^{g}
    \ar[r] & 0 \\
   && \Delta(i)^{d_i} \ar@{=}[r] \ar[d] & \Delta(i)^{d_i} \ar[d] \\ && 0
& 0}$$
Thus  $Y$ is $\Delta$-filtered, since $M_1$ and $\ker(g)$ are
$\Delta$-filtered. This proves the lemma.
\end{proof}

Given a $\Delta$-filtered module $M$ with $\Delta$-dimension vector
$\dv d=(d_i)_i$, by iteration of Lemma  \ref{subsource} or Lemma
\ref{factorsink} we obtain a nice  $\Delta$-filtration $M=M(0)
\supset M(1)\supset M(2)\supset \cdots $ such that $M(i)/M(i+1)\cong
\Delta(l_i)$. Moreover, for any $i$ either $l_{i+1}\succeq l_{i}$ or
they are non-comparable.

\subsection{Representation variety of $\Delta$-filtered modules}

Let $\rep_\Delta(\tilde{Q}, {\mathcal I}, \dv d)$ be the subset of
$\rep(\tilde{Q}, {\mathcal I}, \dv d)$, containing the
points which correspond to $\Delta$-filtered modules, where $\mathcal{I}$ is the ideal
defined in  Section \ref{q-hered}.
Suppose that $ \rep_\Delta(\tilde{Q}, {\mathcal I}, \dv d)$ is not empty. Then by Proposition \ref{HVTheorem2},
for any vertex $i_0\in Q_0$, we have $d_{i_0}\geq\sum_{ \alpha\in Q_1, t(\alpha)=i_0}d_{s(\alpha)}$.
We let $d'_{i_0}=d_{i_0}-\sum_{ \alpha\in Q_1, t(\alpha)=i_0}d_{s(\alpha)} $ and
 decompose

$$k^{d_{i_0}}= \left(\bigoplus_{\begin{smallmatrix}\alpha\in Q_1, &t(\alpha)=i_0\end{smallmatrix}}
k^{d_{s(\alpha)}}\right) \oplus k^{d'_{i_0}}. $$
Now define

$$R^\alpha=\{M\in  \rep_\Delta(\tilde{Q}, {\mathcal I}, \dv d)| \mbox{ each map }M_\alpha
\mbox{ is the standard embedding } k^{d_{s(\alpha)}}\subseteq
k^{d_{i_0}} \}.$$

\begin{proposition} \label{irreducibility}
Suppose that the set $\rep_\Delta(\tilde{Q}, {\mathcal I}, \dv d)$ is non-empty.
\begin{itemize}
\item[(1)] The subset $R^\alpha$ is an affine space.

\item[(2)] The subset  $\rep_\Delta(\tilde{Q}, {\mathcal I}, \dv d)$ is open and irreducible in
$\rep(\tilde{Q}, {\mathcal I}, \dv d)$.

\end{itemize}
\end{proposition}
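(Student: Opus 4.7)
The plan is to exploit the explicit generators of $\mathcal{I}$ to show that once the forward maps $M_\alpha$ ($\alpha\in Q_1$) are fixed as standard embeddings, the reverse maps $M_{\alpha^*}$ are uniquely determined on their ``summand'' blocks and freely parametrized on their ``complement'' blocks; this will yield (1). For (2), openness will follow directly from Proposition \ref{HVTheorem2}(iv), while irreducibility will follow from the observation that every $\Delta$-filtered module is $\gl(\dv d)$-conjugate to a point of $R^\alpha$.

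For (1), I would fix a point in $R^\alpha$ and analyze which tuples of reverse maps are allowed. For each arrow $\alpha : i\to j$ in $Q_1$, split $M_{\alpha^*} : k^{d_j}\to k^{d_i}$ into blocks according to the decomposition $k^{d_j} = \bigoplus_{\beta : t(\beta)=j} k^{d_{s(\beta)}} \oplus k^{d'_j}$. Relation (2), $\beta^*\alpha = 0$ for $\beta\neq\alpha$ with $t(\beta)=t(\alpha)=j$, kills the $\alpha$-block of $M_{\beta^*}$ whenever $\beta\neq\alpha$. Relation (1), $\alpha^*\alpha = \sum_{\gamma : t(\gamma)=i}\gamma\gamma^*$, pins the $\alpha$-block of $M_{\alpha^*}$ to $\sum_\gamma M_\gamma M_{\gamma^*}$, a polynomial in the $M_{\gamma^*}$ attached to arrows $\gamma^*$ starting at $i$, which is strictly earlier in the quiver than $j$. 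Since $Q$ has no oriented cycles, an induction along the partial order on $Q_0$ shows that each choice of the free blocks $M_{\alpha^*}|_{k^{d'_j}} \in \hom(k^{d'_j}, k^{d_i})$ extends uniquely to a tuple satisfying all the defining relations, and Proposition \ref{HVTheorem2}(iv) guarantees membership in $\rep_\Delta$. Hence $R^\alpha$ is isomorphic as a variety to the affine space $\bigoplus_{\alpha : i\to j}\hom(k^{d'_j}, k^{d_i})$.

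For (2), openness is immediate: the conditions of Proposition \ref{HVTheorem2}(iv) (injectivity of each $M_\alpha$ and transversality of the images at a common target) are cut out by non-vanishing of certain minors. For irreducibility, I would show that the action map $\mu : \gl(\dv d)\times R^\alpha \to \rep_\Delta(\tilde{Q},\mathcal{I},\dv d)$, $(g,M)\mapsto g\cdot M$, is surjective. Given $N\in\rep_\Delta$, Proposition \ref{HVTheorem2}(iv) ensures that at each vertex $j\in Q_0$ the subspaces $\im(N_\alpha)$ (for arrows $\alpha$ ending at $j$) are linearly independent; selecting a complement $C_j\subseteq k^{d_j}$ of dimension $d'_j$ and bases that identify each $\im(N_\alpha)$ with the standard $k^{d_{s(\alpha)}}$-summand produces an element $g\in\gl(\dv d)$ with $g\cdot N\in R^\alpha$. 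Since $\gl(\dv d)$ is connected and $R^\alpha$ is affine (hence irreducible), the source of $\mu$ is irreducible, and therefore so is its image.

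The main point requiring care is the consistency of the linear system in (1); the acyclicity of $Q$ is precisely what prevents circular dependencies among the relations and allows the inductive resolution to terminate. Once (1) is in hand, (2) reduces to standard arguments combining openness of rank conditions with the connectedness of $\gl(\dv d)$.
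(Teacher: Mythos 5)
Your proposal is correct and takes essentially the same route as the paper: openness from Proposition \ref{HVTheorem2}(iv), irreducibility from the surjection $\gl(\dv d)\times R^\alpha \to \rep_\Delta(\tilde{Q},\mathcal{I},\dv d)$, and affineness of $R^\alpha$ from the observation that, with the forward maps fixed as standard embeddings, the generators of $\mathcal{I}$ become a linear system in the reverse maps. The paper stops at ``solution space of a linear system, hence affine,'' whereas you go further and resolve the block structure by induction along the partial order to exhibit $R^\alpha\cong\prod_{\alpha:i\to j}\hom(k^{d'_j},k^{d_i})$; this is harmless extra detail, not a different method.
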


\begin{proof}
By the definition of $\mathcal{I}$, we see that any element in
$\mathcal{I}$ is a linear combination of arrows in
$\tilde{Q}_1\minus Q_1$ when we view arrows in $Q_1$ as constants.
Thus $R^\alpha$ is the solution space of a linear system, and so it
is an affine space. The openness of $\rep_\Delta(\tilde{Q}, {\mathcal
I}, \dv d)$ follows from Proposition \ref{HVTheorem2} (iv).  By
Proposition \ref{HVTheorem2} (iii), we know that the $\gl(\dv
d)$-orbit of any $\Delta$-filtered representation $M$ meets with
$R^\alpha$. So we have a surjective map $\gl(\dv d)\times
R^\alpha\rightarrow \rep_\Delta(\tilde{Q}, {\mathcal I}, \dv d)$,
and thus $\rep_\Delta(\tilde{Q}, {\mathcal I}, \dv d)$ is
irreducible.
\end{proof}

\begin{remark}
Note that $\Delta$-filtered modules are characterized by vanishing of
some extension groups (see \cite{DX2}). Therefore, in general, for any quasi-hereditary algebra $\Lambda$, we have
that the module variety $\mathrm{mod}_\Delta(\Lambda, \dv d)$ of $\Delta$-filtered modules with dimension vector
$\dv d$ is open in the whole module variety $\mathrm{mod}(\Lambda, \dv d)$.
\end{remark}

\section{The double of a quiver of type $A_n$}
\label{doublequiverA}

From now on, we shall concentrate on the study of $k\tilde{Q}/\mathcal{I}$, where
$Q$ is a quiver of type $A_n$, and we denote by ${\bf D}$ the algebra $k\tilde{Q}/\mathcal{I}$.
In this section, we first fix some notation concerning
$Q$ of type $A_n$. Then we give an example of ${\bf D}$ with $Q$ of
type $A_5$, illustrating the structure of ${\bf D}$.

Let $Q_0=\{1,\dots,n\}$ be the set of vertices of $Q$.
For $1\leq i\leq n-1$, there is a unique arrow connecting vertices $i$ and $i+1$ and
we denote it by $\alpha_i$.
We denote by $\beta_i$ the reverse arrow of $\alpha_i$ in
$\tilde{Q}_1$, that is, $s(\beta_i)=t(\alpha_i)$ and $t(\beta_i)=s(\alpha_i)$. For the sake
of convenience, we set $\alpha_0, \beta_0, \alpha_n$ and $\beta_n$ to be zero arrows. Now the relations
in $\mathcal{I}$ defined in Section \ref{q-hered} are as follows.


\begin{itemize}
\item[(1)] $\beta_{j-1}\alpha_{j-1}$ and $\beta_j\alpha_j$ for a
source $j$;
\item[(2)]  $\beta_{j}\alpha_{j-1}$ and
$\beta_{j-1}\alpha_j$ for a sink $j$;
\item[(3)]  $\beta_{j-1}\alpha_{j-1}-\alpha_j\beta_j$ for $j$
 non-admissible and $t(\alpha_j)=j$;
\item[(4)] $\alpha_{j-1}\beta_{j-1}-\beta_j\alpha_j$ for $j$
non-admissible and $s(\alpha_j)=j$.
\end{itemize}


We say that $j$ is a successor of $i$, if $i\succ j$ and $i\succeq l\succeq j$
implies $j=l$ or $l=i$. In this case, $i$ is called a predecessor of
$j$. Note that if $i$ is non-admissible then it has a unique
successor and a unique predecessor. If $i$ is sink it has no
successor, and at most two predecessors, and exactly two if and only
if $i$ is an interior vertex of $Q$. Similarly, if $i$ is a source,
then it has no predecessor, and at most two successors, and exactly
two if and only if $i$ is an interior vertex of $Q$. Another
description of $\succeq$ is given by $i\succeq j$ if and only if there is
a path in $Q$ from $i$ to $j$.

\begin{example}\label{example1}
Let $Q$ be the quiver:
$$ \xymatrix{1 &\ar[l] ^{\alpha_1}2\ar[r]_{\alpha_2}&
3\ar[r]_{\alpha_3}&4
&\ar[l]^{\alpha_4}5}
$$

\noindent Then $\tilde{Q}$ is
$$\xymatrix{1\ar@/^/[r]^{\beta_1}&\ar@/^/[l]^{\alpha_1}2\ar@/_/[r]_{\alpha_2}&
3\ar@/_/[l]_{\beta_2}\ar@/_/[r]_{\alpha_3}&4\ar@/_/[l]_{\beta_3}\ar@/^/[r]^{\beta_4}
&\ar@/^/[l]^{\alpha_4}5}$$

\noindent and ${\bf D}=\field\tilde{Q}/\mathcal{I}$ with
$\mathcal{I}$ generated by $\{ \beta_1\alpha_1, \beta_2\alpha_2,
\alpha_2\beta_2-\beta_3\alpha_3, \beta_3\alpha_4, \beta_4\alpha_3,
\beta_4\alpha_4\}$. We denote by $e_i$ the trivial path in
$\tilde{Q}$ corresponding to vertex $i$.

(1) Each $\Delta(i)$ is an indecomposable projective $\field
Q$-module and so it has a $\field$-basis consisting of paths in $Q$
which start from vertex $i$. So $\Delta(1), \dots, \Delta(5)$ are,
respectively, as follows.

$$\xymatrix{ 1  && 2 \ar[dl]_{\alpha_1} \ar[dr]^{\alpha_2} && 3 \ar[dr]^{\alpha_3} && 4 && 5 \ar[dr]^{\alpha_4}
  \\ &1& &3\ar[dr]^{\alpha_3} & &4& && & 4  \\ && && 4 &&
  && && }$$

More precisely, the modules $\Delta(1), \dots, \Delta(5)$
have $\field$-bases $\{e_1\}$, $\{e_2, \alpha_1,
\alpha_2,\alpha_3\alpha_2\}$, $\{e_3, \alpha_3\}$, $\{e_4\}$ and
$\{e_5, \alpha_4\}$, respectively.

(2) Now the partial order defined in Section \ref{q-hered} is: $2\succ 1$, $2\succ 3\succ 4$ and $5\succ 4$.


(3) We now compute indecomposable projective ${\bf D}$-modules $P(1), \dots, P(5)$. Each of them has a
$\field$-basis consisting of residue classes of paths
$\overline{p}$, such that $p=ab$, where $a$ consists entirely of
$\alpha_j$'s and $b$ consists entirely of $\beta_j$'s.  More precisely, the modules $P(1), \dots, P(5)$ have
a $k$-basis consisting of $\{e_1, \beta_1,\alpha_1\beta_1, \alpha_2\beta_1, \alpha_3\alpha_2\beta_1 \}$,
$\{e_2, \alpha_1, \alpha_2, \alpha_3\alpha_2\}$, $\{e_3, \beta_2, \alpha_3, \alpha_1\beta_2,$\\$ \alpha_2\beta_2,
\alpha_3\alpha_2\beta_2 \}$,
$\{e_4, \beta_3, \beta_2\beta_3, \alpha_3\beta_3, \alpha_1\beta_2\beta_3, \alpha_2\beta_2\beta_3,
\alpha_3\alpha_2\beta_2\beta_3,  \beta_4, \alpha_4\beta_4 \}$ and $\{e_5, \alpha_4\}$, respectively.

(4) We illustrate the structure of modules $P(1), \dots, P(5)$,
respectively, as follows, where the action by the $\beta_i$'s is
represented by double arrows.

$$\xymatrix{ &&1\ar@{=>}[dl]^{\beta_1}
  &&2\ar[dr]^{\alpha_2} \ar[dl]_{\alpha_1} &&&&&3\ar[dr]^{\alpha_3}
  \ar@{=>}[dl]_{\beta_2} \ar[dr]^{\alpha_3} \\ &2\ar[dr]^{\alpha_2}
  \ar[dl]_{\alpha_1} &&1&&3\ar[dr]^{\alpha_3}&&&2\ar[dr]^{\alpha_2}
  \ar[dl]_{\alpha_1}&&4 \ar@{=>}[dl]_{\beta_3} \\ 1 &&3\ar[dr]^{\alpha_3}&&&&4&1&&3\ar[dr]^{\alpha_3} \\
  &&&4&&&&&&&4 }$$

$$\xymatrix{ &&& 4\ar@{=>}[dl]_{\beta_3}\ar@{=>}[dr]^{\beta_4} &&& 5\ar[dr]^{\alpha_4} \\ && 3\ar[dr]^{\alpha_3}
  \ar@{=>}[dl]_{\beta_2} &&5\ar[dr]^{\alpha_4} &&& 4 \\ &
  2\ar[dr]^{\alpha_2} \ar[dl]_{\alpha_1} && 4 \ar@{=>}[dl]_{\beta_3} & & 4\\ 1&&3\ar[dr]^{\alpha_3} \\ &&& 4 }$$


(5) For any vertex $i$, the indecomposable projective  ${\bf
D}$-module $P(i)$ has a simple top $L(i)$ and has $\bigoplus_{j}
P(j) $ as its maximal submodule, where $j$'s are predecessors of
$i$. That is, we have a short exact sequence
$$
\begin{CD}
0 @>>> \bigoplus_{j} P(j) @>>> P(i)@>>>
L(i)@>>> 0.
\end{CD}
$$

\end{example}

\begin{remark}\label{basisremark} The properties of ${\bf D}$ in Example \ref{example1}
 can be generalized to $k\tilde{Q}/\mathcal{I}$
for $Q$ of other quivers without oriented cycles, see \cite{HV} for
details.
\end{remark}

\section{Gluing of $\Delta$-filtered modules}\label{gluing:section}

In this section, we prove some further properties of
$\Delta$-filtered ${\bf D}$-modules. We first explain how to glue
two $\Delta$-filtered modules at an admissible vertex and obtain a
new $\Delta$-filtered module, which usually has higher dimension. We
prove that any $\Delta$-filtered module can be obtained by gluing
$\Delta$-filtered modules. Recall that a ${\bf D}$-module is said to
be exceptional if $\ext_{\bf D}^1(M, M)=0$. Suppose that a
$\Delta$-filtered module $M$  is exceptional and is glued from $M'$
and $M''$ at an admissible vertex. Then  we show that both $M'$ and
$M''$ are exceptional.

Let $i$ be a sink or a source of $Q$. Let $M'$ and $M''$ be two
$\Delta$-filtered modules with  $\supp_{\Delta}(M')\subseteq$
$\{1,\dots,i\}$ and  $\supp_{\Delta}(M'')\subseteq$ $\{i,\dots,n\}$,
and $(\underline{\dim}_\Delta(M'))_i=(\underline{\dim}_\Delta(M''))_i=d_i>0$.

Assume that $i$ is a sink in $Q$. By Lemma \ref{factorsink}, we have
short exact sequences 
$$
\begin{CD}
0 @>>> \ker (f') @>>> M' @>{f'}>> \Delta (i)^{d_i} @>>> 0
\end{CD}
$$ 
and 
$$
\begin{CD}
0 @>>> \ker (f'') @>>> M'' @>{f''}>> \Delta (i)^{d_i} @>>> 0.
\end{CD}
$$
Let $M$ be given by
the pullback of $f'$ and $f''$, that is, we have a short exact sequence
$$
\begin{CD} 
0 @>>> M @>>> M'\oplus M'' @>{(\begin{smallmatrix} f'& -f''\end{smallmatrix})}>>
\Delta(i)^{d_i} @>>> 0.
\end{CD}
$$
Similarly, if $i$ is a source, we have short exact sequences
$$
\begin{CD}
0 @>>> \Delta(i)^{d_i} @>{f'}>>  M' @>>> \cok(f') @>>> 0
\end{CD}
$$ 
and 
$$
\begin{CD}
0 @>>> \Delta(i)^{d_i} @>{f''}>>  M'' @>>> \cok(f'') @>>> 0
\end{CD}
$$ 
Let $M$ be the pushout of $f'$ and $f''$, that
is, we have a short exact sequence 
$$
\begin{CD}
0 @>>> \Delta(i)^{d_i}
@>{\bigl(\begin{smallmatrix}f' \\  -f''
\end{smallmatrix}\bigr)}>> M'\oplus M'' @>>> M @>>> 0.
\end{CD}
$$

In both of these cases, we say that $M$ is obtained by gluing $M'$ and
$M''$ at $i$.

\begin{lemma} Let $M'$ and $M''$ be as above.
If $M$ is glued from $M'$ and $M''$ at $i$, then $M$ is $\Delta$-filtered.
\end{lemma}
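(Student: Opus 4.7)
The idea is to show in each of the two cases (sink and source) that $M$ is an extension of two modules already known to be $\Delta$-filtered, and to conclude using the fact that $\mathcal{F}(\Delta)$ is closed under extensions. This last fact is a standard property of quasi-hereditary algebras: given $0\to A\to B\to C\to 0$ with $A,C\in\mathcal{F}(\Delta)$, concatenating a $\Delta$-filtration of $A$ with the preimage in $B$ of a $\Delta$-filtration of $C$ yields a $\Delta$-filtration of $B$; equivalently, it follows from Proposition \ref{HVTheorem2}, since the property of having projective dimension at most one is closed under extensions.

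Case 1 ($i$ a sink). From the defining pullback of $f'$ and $f''$, the second projection $M\to M''$ is surjective (because $f'$ is surjective) and its kernel is isomorphic to $\ker(f')$. This yields the short exact sequence
\[
0\to \ker(f')\to M\to M''\to 0.
\]
By Lemma \ref{factorsink} applied to $M'$, the submodule $\ker(f')$ is $\Delta$-filtered, and $M''$ is $\Delta$-filtered by hypothesis. Hence $M\in\mathcal{F}(\Delta)$.

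Case 2 ($i$ a source). Dually, from the defining pushout, the composite $M''\hookrightarrow M'\oplus M''\twoheadrightarrow M$ is injective (if $(0,m'')$ lies in the image of $(f',-f'')$, then $f'(x)=0$ for some $x$, and injectivity of $f'$ forces $x=0$, hence $m''=0$), and a direct identification of its cokernel gives $M'/f'(\Delta(i)^{d_i})=\cok(f')$. Thus we obtain
\[
0\to M''\to M\to \cok(f')\to 0.
\]
By Lemma \ref{subsource} applied to $M'$, the cokernel $\cok(f')$ is $\Delta$-filtered, and $M''$ is $\Delta$-filtered by hypothesis. Hence $M\in\mathcal{F}(\Delta)$.

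There is no serious obstacle: the proof reduces to a short diagram chase together with the known $\Delta$-filtered structure of $\ker(f')$ and $\cok(f')$ supplied by Lemmas \ref{factorsink} and \ref{subsource}. The symmetry between the two cases reflects the duality between pullback and pushout.
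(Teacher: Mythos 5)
Your proof is correct, but it takes a genuinely different route from the paper. The paper's proof applies the long exact sequence in $\ext$ directly to the gluing short exact sequence and shows that $\ext^2_{\bf D}(M,-)=0$, using two structural facts: that $gldim\,{\bf D}\leq 2$ (to kill $\ext^3(\Delta(i)^{d_i},-)$ in the sink case) and that $\Delta(i)$ is projective when $i$ is a source (to kill $\ext^1(\Delta(i)^{d_i},-)$). It then concludes via the equivalence of (i) and (ii) in Proposition \ref{HVTheorem2}. You instead perform a small diagram chase on the pullback (resp.\ pushout) to extract the sequences $0\to\ker(f')\to M\to M''\to 0$ and $0\to M''\to M\to\cok(f')\to 0$, whose outer terms are already known to be $\Delta$-filtered by Lemmas \ref{factorsink} and \ref{subsource}, and then invoke closure of $\mathcal{F}(\Delta)$ under extensions. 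Your argument avoids any reference to the global dimension bound and to projectivity of source Vermas, relying instead on the explicit $\Delta$-filtered kernels and cokernels supplied by the earlier structural lemmas; it is perhaps slightly more elementary in character, while the paper's argument is shorter once Proposition \ref{HVTheorem1} (the $gldim\leq 2$ bound) is in hand. Both are sound.
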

\begin{proof}
If $i$ is a sink, then we have an exact sequence
$$
\begin{CD}
\ext^2_{\bf D}(M'\oplus M'',-) @>>> \ext^2_{\bf D}(M,-)
@>>> \ext^3_{\bf D}(\Delta(i)^{d_i},-),
\end{CD}
$$
showing that $M$ has projective dimension at most one. Therefore $M$
is $\Delta$-filtered by Proposition \ref{HVTheorem2}.

Now suppose $i$ is a source. Then we have an exact sequence
$$
\begin{CD}
\ext^1_{\bf D}(\Delta(i)^{d_i},-) @>>> \ext^2_{\bf D}(M,-) @>>> \ext^2_{\bf D}(M'\oplus M'',-).
\end{CD}
$$
This shows again that $M$ is $\Delta$-filtered.
\end{proof}

\begin{proposition} \label{allmodulesareglued}
Let $i$ be a sink or a source in $Q$. Then for any $\Delta$-filtered
module $M$ with $(\underline{\dim}_\Delta(M))_i>0$, there exists $\Delta$-filtered modules $M'$ and $M''$ with
$(\underline{\dim}_\Delta(M'))_i=(\underline{\dim}_\Delta(M''))_i=(\underline{\dim}_\Delta(M))_i$ such that $M$
is isomorphic to a module glued from  $M'$ and $M''$ at $i$.
\end{proposition}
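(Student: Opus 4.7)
I will construct $M'$ and $M''$ explicitly when $i$ is a sink; the source case is handled by a dual construction (or equivalently by applying $k$-duality, which exchanges sinks with sources and pullbacks with pushouts while preserving $\Delta$-filteredness and the ideal $\mathcal{I}$).

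Suppose $i$ is a sink. By Proposition \ref{HVTheorem2}(iv), the subspaces $V_L := \im M_{\alpha_{i-1}}$ and $V_R := \im M_{\alpha_i}$ of $M_i$ are independent, so I fix a complement $V_0$ of dimension $d_i := (\underline{\dim}_\Delta(M))_i$, yielding $M_i = V_L \oplus V_R \oplus V_0$. The sink relations $\beta_i\alpha_{i-1} = 0$ and $\beta_{i-1}\alpha_i = 0$ imply that $M_{\beta_i}(V_L) = 0$ and $M_{\beta_{i-1}}(V_R) = 0$; this is what allows me to define subspaces $N_L, N_R \subseteq M$ that are actually submodules:
\[
(N_L)_j = \begin{cases} M_j & j < i, \\ V_L & j = i, \\ 0 & j > i, \end{cases}
\qquad
(N_R)_j = \begin{cases} 0 & j < i, \\ V_R & j = i, \\ M_j & j > i. \end{cases}
\]
I then set $M' := M/N_R$ and $M'' := M/N_L$; by construction $\supp_\Delta(M') \subseteq \{1,\dots,i\}$ and $\supp_\Delta(M'') \subseteq \{i,\dots,n\}$.

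To conclude the gluing description, I would verify: (a) $M'$ and $M''$ are $\Delta$-filtered via Proposition \ref{HVTheorem2}(iv), since every remaining arrow map is still injective (the cut occurs only at $i$) and the independence of images at each interior vertex is inherited from $M$; (b) $(\underline{\dim}_\Delta(M'))_i = (\underline{\dim}_\Delta(M''))_i = d_i$, by tracking which $\Delta$-composition factors survive the quotient --- the $\Delta(j)$'s with $j > i$ that contribute to $M_i$ through $V_R$ are exactly those killed in $M'$, and symmetrically for $M''$; (c) $N_L \cap N_R = 0$ (clear away from $i$, and at $i$ since $V_L \cap V_R = 0$), while $M/(N_L + N_R)$ is concentrated at $i$ with dimension $d_i$, hence isomorphic to $L(i)^{d_i} = \Delta(i)^{d_i}$ (since $\Delta(i)$ is simple at the sink $i$). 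Assembling these yields the exact sequence
\[
0 \to M \to M' \oplus M'' \to \Delta(i)^{d_i} \to 0
\]
with first map the pair of quotients $(p', p'')$ and second map $(f', -f'')$ the canonical surjections of Lemma \ref{factorsink}; this is exactly the pullback description of $M$ being glued from $M'$ and $M''$ at the sink $i$.

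The main obstacle is ensuring that $N_L$ and $N_R$ are genuine submodules of $M$, which rests delicately on the sink relations in $\mathcal{I}$. Once that is in place, the remaining verifications are dimension counting and routine diagram chasing.
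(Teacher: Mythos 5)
Your sink-case argument is correct and essentially the same as the paper's: the paper defines $N'$, $N''$ as the submodules generated by $\bigoplus_{j>i}M_j$ and $\bigoplus_{j<i}M_j$, which, by the same sink relations you invoke, are exactly your $N_R$ and $N_L$; the rest (trivial intersection, $\Delta$-filteredness of the quotients via Proposition \ref{HVTheorem2}, cokernel $\cong L(i)^{d_i}=\Delta(i)^{d_i}$) follows the same line.

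The source case, however, is not as automatic as your proposal suggests, and this is where the content is. If $i$ is a source, both arrows $\alpha_{i-1},\alpha_i$ leave $i$, so there is no analogue of your subspace decomposition $M_i=V_L\oplus V_R\oplus V_0$, and the submodules $M',M''$ cannot simply be zero on the ``wrong'' side: since $M_{\alpha_i}$ is injective, any submodule containing $M_i$ has nonzero components on both sides of $i$. What works (and is what the paper does) is to take the unique copy of $\Delta(i)^{d_i}\subset M$ generated by $M_i$ (Lemma \ref{subsource}), observe that $\cok(f)$ vanishes at $i$ and so splits as $Y'\oplus Y''$ supported on $\{1,\dots,i-1\}$ and $\{i+1,\dots,n\}$, and set $M'=g^{-1}(Y')$, $M''=g^{-1}(Y'')$; for $j$ on the ``wrong'' side, $(M')_j=\Delta(i)^{d_i}_j$ rather than $0$. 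Your parenthetical appeal to $k$-duality is also not quite right: $D=\hom_k(-,k)$ sends projective dimension $\leq 1$ to injective dimension $\leq 1$, and under the natural identification $\mathbf{D}^{op}\cong\mathbf{D}$ (which must swap the roles of the $\alpha$- and $\beta$-arrows to preserve $\mathcal{I}$), the condition ``$M_\alpha$ injective for $\alpha\in Q_1$'' becomes ``$M_{\alpha^*}$ surjective,'' which is not the $\Delta$-filtered condition of Proposition \ref{HVTheorem2}(iv). So the source case needs its own short argument rather than a duality black box.
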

\begin{proof}
Let $i$ be a source in $Q$. By Lemma \ref{subsource}, there is a
short exact sequence
$$
\begin{CD}
0 @>>> \Delta(i)^{d_i} @>{f}>> M @>{g}>>  \cok (f) @>>> 0,
\end{CD}
$$
where $\cok (f)$ is
$\Delta$-filtered. Here $\Delta(i)^{d_i}$ is the submodule of $M$
generated by $M_i$. Then $(\underline{\dim}(\cok (f)))_i=0$ and
therefore $\cok (f)= Y'\oplus Y''$, where $\supp(Y')$ is contained
in $\{1,\dots,i-1\}$ and $\supp(Y'')$ is contained in
$\{i+1,\dots,n\}$.
We have short exact sequences
$$
\begin{CD}
0 @>>>
\Delta(i)^{d_i} @>>> M' @>>> Y' @>>> 0
\end{CD}
$$
and
$$
\begin{CD}
0 @>>> \Delta(i)^{d_i} @>>> M'' @>>> Y''
@>>> 0,
\end{CD}
$$
where $M'=g^{-1}(Y')$ and $M''=g^{-1}(Y'')$. By Proposition
\ref{HVTheorem2}, we see that $M'$ and $M''$ are $\Delta$-filtered
with their $\Delta$-supports contained in $\{1,\dots,i\}$ and
$\{i,\dots,n\}$, respectively. Now there is a short exact sequence
$$
\begin{CD}
0 @>>> X @>>> M' \oplus M'' @>{(\begin{smallmatrix} g'& g''\end{smallmatrix})}>>
 M @>>> 0,
\end{CD}
$$
where $g'$ and $g''$ are the inclusion maps. We have
$\underline{\dim}(X)=\underline{\dim}(\Delta(i)^{d_i})$  and $X$ is projective as an $\field Q$-module.
Therefore $X\cong \Delta(i)^{d_i}$. Hence $M$ is glued from $M'$
and $M''$ at $i$.

Now suppose that $i$ is a sink. We let $N'$ and $N''$ be the
submodules of $M$ generated by $V''=\bigoplus^{i-1}_{j=1}M_j$ and
$V'=\bigoplus^n_{j=i+1}M_j$. By the relations in $\mathcal{I}$, we
know that $N''$ is supported on the vertices $\{1,\dots,i\}$ and
$N'$ is supported on the vertices $\{i,\dots,n\}$. Now let $M'=M/N'$
and $M''=M/N''$. We have $(N'\cap N'')_i=0$, since $M$ is
$\Delta$-filtered and so $\im (M_{\alpha_{i-1}})\cap \im
(M_{\alpha_{i+1}})=0 $. Moreover, for all $j\neq i$ we have $N'_j=0$
or $N''_j=0$. Hence $N'\cap N''=0$. So we have an embedding
$$
\begin{CD}
M @>{\bigl(\begin{smallmatrix}f' \\
    f''\end{smallmatrix}\bigr)}>> M'\oplus M'',
\end{CD}
$$
where $f'$ and $f''$ are the quotient maps. Now by comparing
dimension vectors, we see that the cokernel of $M \lra M'\oplus M''$ is
$L(i)^{d_i}\cong \Delta(i)^{d_i}$. Hence $M'\oplus M''$ is
$\Delta$-filtered and $M$ is obtained by gluing $M'$ and $M''$ at $i$.
\end{proof}

From the  proof of Proposition \ref{allmodulesareglued}, for a module $M$  obtained by gluing two
$\Delta$-filtered modules $M'$ and $M''$, we see that if $i$ is a source, then $M'$ and $M''$ can be
viewed as submodules of $M$,
and if $i$ is a sink, then $M'$ and $M''$
can be viewed as quotients of $M$.

\begin{example}
We consider the quiver $Q$ in Example \ref{example1}. The projective module $P(4)$ has two submodules
isomorphic to $P(3)$ and $P(5)$, respectively. We have a short exact sequence
$$
\begin{CD}
0 @>>> P(4)@>>> P(4)/P(3)\oplus P(4)/P(5)@>>> \Delta(4)@>>> 0, 
\end{CD}
$$
that is, the module $P(4)$ can be glued from its quotient modules $ P(4)/P(3)$ and $ P(4)/P(5)$ at vertex $4$.
\end{example}

\begin{lemma}\label{gluingmaps}
Let $h:M\lra N$ be a morphism of two $\Delta$-filtered modules which are
obtained by gluing $M'$, $M''$ and $N'$, $N''$, respectively, at $i$.
Then
\begin{itemize}
\item[(1)] if $i$ is a source, then $f$ restricts to maps $h': M'\lra N'$ and
$h'': M''\lra N''$ of submodules.
\item[(2)] if $i$ is a sink, then $f$
induces maps $h': M'\lra N'$ and $h'': M''\lra N''$ of quotient modules.
\end{itemize}
\end{lemma}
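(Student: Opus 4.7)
The plan is to extract from the proof of Proposition \ref{allmodulesareglued} an intrinsic description of $M'$ and $M''$ inside $M$ (as submodules in the source case, as quotients in the sink case), and similarly of $N'$ and $N''$ inside $N$; once these descriptions are in hand, the compatibility of $h$ with the decomposition will be forced by $h$ being a morphism of representations combined with support considerations.

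Source case. Lemma \ref{subsource} pins down a unique submodule $\Delta_{M} := \Delta(i)^{d_{i}} \subseteq M$. Moreover $(\Delta_{M})_{i} = M_{i}$, since the hypothesis that $i$ is a source forces $\Delta(j)_{i} = 0$ for every $j \neq i$ (no non-trivial path in $Q$ terminates at a source), so the whole of $M_{i}$ comes from $\Delta(i)$-composition factors. Consequently $h(M_{i}) \subseteq N_{i} = (\Delta_{N})_{i}$ and, since $\Delta_{N}$ is a submodule of $N$, we obtain $h(\Delta_{M}) \subseteq \Delta_{N}$; thus $h$ descends to a morphism $\bar h : M/\Delta_{M} \to N/\Delta_{N}$. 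From the pushout description one identifies $M/\Delta_{M} \cong \cok(f'_{M}) \oplus \cok(f''_{M})$, with the two summands supported on the disjoint vertex sets $\{1,\dots,i-1\}$ and $\{i+1,\dots,n\}$, and identically for $N$; any morphism of representations between such pairs of disjoint-support summands vanishes, so $\bar h$ is block-diagonal with respect to the decompositions. Since, via the pushout, $M'$ is exactly the preimage of the first summand $\cok(f'_{M})$ under $M \to M/\Delta_{M}$, this gives $h(M') \subseteq N'$, yielding the desired restriction $h' : M' \to N'$; the argument for $h''$ is symmetric.

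Sink case. The proof of Proposition \ref{allmodulesareglued} realises $M'$ as the quotient $M/K^{M}$, where $K^{M}$ is the submodule of $M$ generated by $\bigoplus_{j > i} M_{j}$; the same recipe produces $N' = N/K^{N}$. Since $h$ is ${\bf D}$-linear, $h(M_{j}) \subseteq N_{j}$ at each vertex, hence $h\bigl(\bigoplus_{j > i} M_{j}\bigr) \subseteq \bigoplus_{j > i} N_{j} \subseteq K^{N}$; as $K^{N}$ is a submodule of $N$, this forces $h(K^{M}) \subseteq K^{N}$, and $h$ passes to the quotient to yield $h' : M' \to N'$. The map $h''$ is constructed identically, using the submodule generated by $\bigoplus_{j < i} M_{j}$.

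No serious obstacle is expected. The only slightly delicate point is identifying $\Delta_{M}$ in the source case and the submodule $K^{M}$ in the sink case intrinsically in terms of $M$; however, this is exactly what Lemma \ref{subsource} and the proof of Proposition \ref{allmodulesareglued} already provide, so the compatibility with $h$ is immediate.
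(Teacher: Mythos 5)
Your proof is correct and takes essentially the same route as the paper: in the sink case you use the exact same intrinsic description of $M'$, $M''$ as quotients of $M$ by the submodules generated by $\bigoplus_{j>i}M_j$ and $\bigoplus_{j<i}M_j$, and the compatibility of $h$ with these generators. In the source case the paper is terser (just noting $h(M_i)\subseteq N_i$, which forces $h$ to respect the submodules $M'$, $M''$ since each is generated by $M_i$ together with the $M_j$ on one side of $i$), whereas you take a slightly more roundabout path through the descent to the cokernels $M/\Delta_M\to N/\Delta_N$ and a disjoint-support block-diagonality argument; both arguments are sound and rest on the same underlying facts from Lemma \ref{subsource} and the proof of Proposition \ref{allmodulesareglued}.
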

\begin{proof}
Assume that $i$ is a source. We have $h(M')\subseteq N'$ and $h(M'')\subseteq
N''$,  since $h(M_i)\subseteq N_i$. The lemma follows in this case.

Now suppose that $i$ is a sink. We may assume that $M'=M/X$ and
$M''=M/Y$, where $X$ and $Y$ are the submodules of $M$ generated
by $\bigoplus_{j=i+1}^nM_{j}$ and $\bigoplus_{j=1}^{i-1}M_{j}$, respectively. We have $h(M_{i-1})\subseteq N_{i-1}$ and
$h(M_{i+1})\subseteq N_{i+1}$. The lemma follows.
\end{proof}

Let $M$ and $N$ be modules glued from $M'$, $M''$ and $N'$, $N''$, respectively.
If $i$ is a sink, we  construct a morphism $h:M\lra N$ from maps $h': M'\lra N'$ and
$h'': M''\lra N''$ provided there exists a map $w: \Delta(i)^{d_i}\lra
\Delta(i)^{d_i}$, where $d_i=\underline{\dim}_\Delta(M)_i=\underline{\dim}_\Delta(N)_i$, such that the second square of
the diagram of gluing sequences
$$\xymatrix{0\ar[r] & M \ar[r]\ar@{.>}[d] & M'\oplus M''\ar[d]^{h'\oplus h''} \ar[r] &
  \Delta(i)^{d_i} \ar[d]^{w} \ar[r] & 0 \\ 0 \ar[r] & N \ar[r] & N'\oplus N'' \ar[r] &
  \Delta(i)^{d_i} \ar[r] & 0}$$
commutes. Now $h$ is a morphism $M\lra N $, which makes the first square of the above
diagram commute. In this case, we say that $h$ is obtained by gluing $h'$ and
$h''$ at $i$. We similarly define gluing of morphisms at a source $i$.

\begin{lemma} \label{gluingexceptional}
Let $M$ be obtained by gluing $M'$ and $M''$ at $i$. If $M$ is
exceptional, then $M'$ and $M''$ are exceptional.
\end{lemma}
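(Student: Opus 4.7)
The plan is to establish $\ext^1_{\bf D}(M', M') = 0$ (and symmetrically $\ext^1_{\bf D}(M'', M'') = 0$) by applying $\hom(-,-)$ in both arguments to the gluing short exact sequence and tracing through the resulting long exact sequences. The two cases (sink vs.\ source) are treated separately because they rely on different vanishings coming from the admissibility of $i$.

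For the sink case, the gluing sequence is $0 \to M \to M'\oplus M'' \to \Delta(i)^{d_i}\to 0$. The key inputs are: (i) the hypothesis $\ext^1_{\bf D}(M,M)=0$; (ii) Lemma~\ref{extinjective}, which gives $\ext^1_{\bf D}(X,\Delta(i))=0$ for any $\Delta$-filtered $X$ and in particular for $X=M$, $X=M'$, $X=M''$, and $X=M'\oplus M''$. Applying $\hom(M,-)$ to the gluing sequence, the two vanishings squeeze $\ext^1(M,M'\oplus M'')$ between them and force it to be $0$. Next, apply $\hom(M',-)$ to the same sequence; because $\ext^1(M',\Delta(i)^{d_i})=0$, the LES tells us that $\ext^1(M',M')\oplus \ext^1(M',M'')$ is a quotient of $\ext^1(M',M)$. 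To control $\ext^1(M',M)$, I would introduce the secondary short exact sequence $0\to N' \to M \to M' \to 0$ coming from the construction in Proposition~\ref{allmodulesareglued} (where $N'$ is the submodule generated by $\bigoplus_{j>i}M_j$, which is $\Delta$-filtered since $M$ and $M'$ are and projective dimension is additive in short exact sequences of pd-$\le 1$ modules). Applying $\hom(-, M')$ to this sequence together with $\text{pd}(M')\le 1$ (so $\ext^2(M', N')=0$) exhibits $\ext^1(M', M')$ as a quotient of $\ext^1(M', M)$, and tracing the connecting map back through the gluing sequence using $\ext^1(M,M)=0$ eliminates the remaining class.

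For the source case, the gluing sequence is $0 \to \Delta(i)^{d_i} \to M'\oplus M'' \to M \to 0$, and the analog of the key vanishing is the projectivity of $\Delta(i)$ for a source, i.e.\ $\ext^1_{\bf D}(\Delta(i),X)=0$ for every $X$. Applying $\hom(-, M)$ to the gluing sequence and invoking $\ext^1(M,M)=0$ together with $\ext^1(\Delta(i)^{d_i}, M)=0$ gives $\ext^1(M'\oplus M'', M)=0$. Then, using the submodule sequence $0\to \Delta(i)^{d_i}\to M\to M^*\to 0$ of Lemma~\ref{subsource} (where $M^*=Y'\oplus Y''$ with $Y'$ and $Y''$ having disjoint supports), the quotient $M^*$ allows one to invert roles and, combined with $\hom(M',-)$ applied to the gluing sequence, produces the desired conclusion $\ext^1(M',M')=0$.

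The main obstacle in both cases is the bookkeeping of connecting homomorphisms: the naive quotient chains coming from the two long exact sequences only give that $\ext^1(M',M')$ is a subquotient of some $\ext^1(M, \Delta(i)^{d_i})$ or $\ext^1(\Delta(i)^{d_i}, M)$, which need not vanish in general. What makes the argument go through is that the class of the gluing extension $\eta$ generates the relevant Ext group as a module over $\text{End}(M)$, because $M$ is exceptional and $\text{id}_M \mapsto \eta$ under the connecting map; the only nontrivial step is to verify that this generation indeed fills up the obstruction space, and this is where the precise structure of $N'$ (respectively $M^*$) coming from Proposition~\ref{allmodulesareglued} and Lemmas~\ref{subsource}/\ref{factorsink} is crucial. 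Once the obstruction is killed, exceptionality of $M'$ and $M''$ follows immediately since they are direct summands of $M'\oplus M''$.
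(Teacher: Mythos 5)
Your opening steps are correct and match the paper: in the sink case the gluing sequence $0\to M\to M'\oplus M''\to\Delta(i)^{d_i}\to 0$ together with $\ext^1_{\bf D}(M,M)=0$ and Lemma~\ref{extinjective} does give $\ext^1_{\bf D}(M,M'\oplus M'')=0$, and the source case analogue gives $\ext^1_{\bf D}(M'\oplus M'',M)=0$. After that point, however, your proposal diverges from the paper and the argument has a genuine gap.

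You switch to the covariant functor $\hom_{\bf D}(M',-)$ applied to the gluing sequence and to the auxiliary sequence $0\to N'\to M\to M'\to 0$. Both long exact sequences yield surjections out of $\ext^1_{\bf D}(M',M)$ (onto $\ext^1_{\bf D}(M',M'\oplus M'')$ and onto $\ext^1_{\bf D}(M',M')$, respectively), and these two surjections are even compatible, since the projection of the gluing map $M\to M'\oplus M''$ onto the $M'$ factor is exactly the quotient map $M\to M'$. But two surjections with a common source do not force the target to vanish, and you never prove $\ext^1_{\bf D}(M',M)=0$ (which is in fact not something the paper claims). The final paragraph concedes this and substitutes an appeal to ``the class of the gluing extension generating the relevant Ext group as an $\smorphism(M)$-module,'' which is an assertion, not a proof; the verification you say is ``the only nontrivial step'' is precisely the missing content.

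What actually closes the argument in the paper is a \emph{contravariant} LES plus a support-disjointness vanishing that your write-up never invokes. In the sink case one applies $\hom_{\bf D}(-,M')$ to the gluing sequence, getting a surjection $\ext^1_{\bf D}(\Delta(i)^{d_i},M')\twoheadrightarrow\ext^1_{\bf D}(M'\oplus M'',M')$ (using $\ext^1_{\bf D}(M,M')=0$, already in hand), and then applies $\hom_{\bf D}(-,M')$ to the sequence $0\to X\to M''\to\Delta(i)^{d_i}\to 0$ from Lemma~\ref{factorsink}. The decisive point is that $\supp(\ttop(X))\cap\supp(M')=\emptyset$, so $\hom_{\bf D}(X,M')=0$, giving an \emph{injection} $\ext^1_{\bf D}(\Delta(i)^{d_i},M')\hookrightarrow\ext^1_{\bf D}(M'',M')$. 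Having a surjection onto $\ext^1(M',M')\oplus\ext^1(M'',M')$ and an injection into the second factor (induced by the same quotient map $M''\to\Delta(i)^{d_i}$, up to sign) forces $\ext^1_{\bf D}(M',M')=0$. The source case is dual, using $0\to\Delta(i)^{d_i}\to M''\to Y\to 0$ from Lemma~\ref{subsource} and $\hom_{\bf D}(M',Y)=0$. Your $N'$-sequence cannot play this role: $N'$ and $M'$ overlap at the vertex $i$, so $\hom_{\bf D}(M',N')$ need not vanish and no injection is forthcoming. Finally, note that the lemma asserts only that $M'$ and $M''$ are each exceptional; your closing sentence jumps to $M'\oplus M''$ being exceptional, which is a strictly stronger statement (it would require $\ext^1_{\bf D}(M',M'')=\ext^1_{\bf D}(M'',M')=0$ as well) and is not part of the claim.
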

\begin{proof}
Assume that $M$ is exceptional. We first consider the case where $i$
is a sink. We have a short exact sequence 
$$
\begin{CD}
0 @>>> M @>>> M'\oplus M'' @>>> \Delta(i)^c @>>> 0, 
\end{CD}
$$  
where
$c=(\underline{\dim}_\Delta(M'))_i=(\underline{\dim}_\Delta(M''))_i$. By
applying the functor $\hom_{\bf D}(M,-)$ to this sequence and using
Lemma \ref{extinjective}, we see that $\ext^1_{\bf D}(M,M'\oplus
M'')=0$. Then by applying $\hom_{\bf D}(-,M')$, we get a surjection
$\ext^1_{\bf D}(\Delta(i)^c,M')\lra \ext^1_{\bf D}(M'\oplus
M'',M')$. From Lemma \ref{factorsink}, we have a short exact
sequence 
$$
\begin{CD}
0 @>>> X @>>> M'' @>>> \Delta(i)^c
@>>> 0.
\end{CD}
$$ 
Note that $\supp(\mathrm{top}(X))\cap
\supp(M')=\emptyset$, where $\mathrm{top}(X)$ is the top of $X$.
Thus $\hom_{\bf D}(X, M')$\\$=0$. We get an injection $\ext^1_{\bf
D}(\Delta(i)^c,M')\lra \ext^1_{\bf D}(M'',M')$ by  applying
$\hom_{\bf D}(-,M')$. Hence $\ext^1_{\bf D}(M',M')=0$. By symmetry,
$\ext^1_{\bf D}(M'',M'')=0$.


Now assume that $i$ is a source. We have a short exact sequence
$$
\begin{CD}
0 @>>> \Delta(i)^c @>>> M'\oplus M'' @>>> M @>>> 0.
\end{CD}
$$ 
Applying
$\hom_{\bf D}(-,M)$, we get $\ext^1_{\bf D}(M'\oplus M'',M)=0$. Now
by applying $\hom_{\bf D}(M',-)$, we get a surjection $\ext^1_{\bf
D}(M',\Delta(i)^c)\lra \ext^1_{\bf D}(M',M'\oplus M'')$. From Lemma
\ref{subsource}, we have a short exact sequence 
$$
\begin{CD}
0
@>>> \Delta(i)^c @>>> M'' @>>> Y @>>> 0.
\end{CD}
$$ 
Since 
$\supp(M')\cap \supp(Y)=\emptyset$, we have $\hom_{\bf
D}(M', Y)=0$. Using $\hom_{\bf D}(M',-)$, we get an injection
$\ext^1_{\bf D}(M',\Delta(i)^c)\lra \ext^1_{\bf D}(M',M'')$. Hence
$\ext^1_{\bf D}(M',M')=0$. By symmetry, $\ext^1_{\bf D}(M'',M'')=0$.
This finishes the proof.
\end{proof}

\section{Exceptional $\Delta$-filtered modules with a linear $\Delta$-support}
\label{exceptionallinear:section}


In this section, we first recall from \cite{BHRR} the construction
of exceptional $\Delta$-filtered modules with $\Delta$-dimension
vector a non-zero vector in $\mathbb{N}^n$ where $Q$ is linearly
oriented. We modify this construction to the case where the
orientation of $Q$ is arbitrary and the given dimension vector has a
linear support, that is, as a subquiver of $Q$ the support is
linearly oriented. In particular, we say that $M$ is a module with
linear $\Delta$-support if the subquiver $\supp_\Delta(M)$ is
linearly oriented. We will define an order on $\Delta$-filtered
modules.  We also prove some properties of the exceptional modules
with a linear $\Delta$-support.

\subsection{Exceptional $\Delta$-filtered modules for $Q$ of linear orientation}\label{linear}

Following \cite{BHRR}, where $Q$ is of linear orientation with vertex $1$ a sink
and vertex $n$ a source, a module $M$ is isomorphic to a nonzero submodule of $P(1)$
if and only if the socle of $M$ is $L(1)$. Such a module $M$ is indecomposable,
exceptional and $\Delta$-filtered. Moreover, the map sending a submodule $M$ of
$P(1)$ to the set of vertices of its $\Delta$-support affords
a bijection between the set of submodules of
$P(1)$ and the set of the subsets of $\{1, \dots, n\}$. We denote by $\Delta(I)$ the
submodule of $P(1)$ with $I$ as the set of vertices of its $\Delta$-support.
Given $\dv d$ a non-zero vector in $\mathbb{N}^n$ , define $I(\dv d)=\supp(\dv d)_0$, the set
of vertices of $\supp(\dv d)$. We denote
by $\dv d_{I(\dv d)}$ the dimension vector with $$(\dv d_{I(\dv d)})_i=
\left\{\begin{array}{ll} 1& \mbox{ if } i\in I(\dv d), \\ 0 & \mbox{ otherwise}.  \end{array}\right.$$
Now define inductively a module $\Delta(\dv d)$ as follows:
$\Delta(\dv d)=\Delta(I(\dv d))\oplus \Delta(\dv d-\dv d_{I(\dv d)})$. In this way we also obtain a descending sequence of
subsets  $I(\dv d)\supseteq I(\dv d-\dv d_{I(\dv d)})\supseteq \cdots$.

\begin{theorem} \cite{BHRR}\label{bhrrlinear}
Given $\dv d$ a non-zero vector in $\mathbb{N}^n$. The module $\Delta(\dv d)$ constructed above is
the unique (up to isomorphism) exceptional $\Delta$-filtered module  with
$\Delta$-dimension vector $\dv d$.
\end{theorem}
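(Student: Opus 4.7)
The plan is to proceed by induction on $|\dv d| = \sum_i d_i$, proving existence and uniqueness in parallel. The base case $|\dv d| = 1$ is immediate, since then $\dv d = \dv d_{\{i\}}$ for a single vertex $i$ and $\Delta(\dv d) = \Delta(i)$ is manifestly the only option.

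Before running the induction, I would establish the basic properties of each building block $\Delta(I)$ for $\emptyset\neq I\subseteq\{1,\dots,n\}$. As a submodule of the indecomposable ${\bf D}$-module $P(1)$, the module $\Delta(I)$ has a simple socle and is therefore indecomposable; an explicit $\Delta$-filtration — obtained by iterating Lemma \ref{subsource} or Lemma \ref{factorsink} at the admissible vertices $1$ and $n$, or by peeling off minimum-index vertices — exhibits $\Delta(I)$ as $\Delta$-filtered with $\underline{\dim}_\Delta\Delta(I) = \dv d_I$. The self-Ext vanishing $\ext^1_{\bf D}(\Delta(I),\Delta(I)) = 0$ is the first substantive computation: since $\mathrm{gldim}\,{\bf D}\leq 2$ by Proposition \ref{HVTheorem1}, a minimal projective resolution has length at most two, and one reduces to showing that no nonzero map from the first syzygy $\Omega\Delta(I)$ to $\Delta(I)$ exists.

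For the existence step, assume by induction that $\Delta(\dv d')$ with $\dv d' := \dv d - \dv d_{I(\dv d)}$ is exceptional $\Delta$-filtered and decomposes as $\bigoplus_\ell \Delta(J_\ell)$ with every $J_\ell\subseteq I(\dv d-\dv d_{I(\dv d)})\subseteq I(\dv d)$. The decomposition $\Delta(\dv d) = \Delta(I(\dv d))\oplus \Delta(\dv d')$ manifestly has the correct $\Delta$-dimension vector, so the remaining task is the mutual Ext-vanishing
\[
\ext^1_{\bf D}(\Delta(I(\dv d)),\Delta(J_\ell)) = 0 = \ext^1_{\bf D}(\Delta(J_\ell),\Delta(I(\dv d)))
\]
for each $\ell$. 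The nesting $J_\ell\subseteq I(\dv d)$ is the key combinatorial hypothesis driving both vanishings, which I would obtain by direct computation in the projective resolutions of the pairs involved.

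For uniqueness, let $M$ be exceptional $\Delta$-filtered with $\underline{\dim}_\Delta M = \dv d$. The plan is first to classify the indecomposable $\Delta$-filtered modules in the linearly oriented case as exactly the $\Delta(J)$ — reducing via Proposition \ref{HVTheorem2} to a statement about projective $kQ$-modules compatible with the $\beta$-action — so that $M \cong \bigoplus_\ell \Delta(J_\ell)$; then to show that exceptionality forces the family $\{J_\ell\}$ to be totally ordered by inclusion, since any nonnested pair $J_\ell$ and $J_m$ would produce a nonzero extension contradicting $\ext^1_{\bf D}(M,M) = 0$. Once nestedness is proved, $\dv d$ pins down the chain uniquely as $I(\dv d)\supseteq I(\dv d - \dv d_{I(\dv d)})\supseteq\cdots$, yielding $M \cong \Delta(\dv d)$. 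The main obstacle throughout is the uniform Ext calculation characterizing exactly when $\ext^1_{\bf D}(\Delta(I),\Delta(J)) = 0$ in terms of the combinatorics of $I$ and $J$; I would handle this by giving an explicit description of $\Omega\Delta(I)$ as a direct sum of thin modules indexed by the connected components of $\{1,\dots,n\}\setminus I$, after which the Hom checks reduce to combinatorial statements about how intervals intersect.
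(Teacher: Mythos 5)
This theorem is quoted from \cite{BHRR}; the present paper gives no proof of it, only the statement and the surrounding notation, so there is no internal argument to compare yours against. Your overall shape (exhibit the $\Delta(I)$ as building blocks, check $\mathrm{Ext}^{1}$-vanishing when the supports are nested, and argue that exceptionality forces a nested decomposition) is in line with what \cite{BHRR} does, and the tools you reach for --- Lemma \ref{subsource}/\ref{factorsink}, $\mathrm{gldim}\,{\bf D}\leq 2$, the bijection between submodules of $P(1)$ and subsets of $\{1,\dots,n\}$ --- are the right ones.

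There is, however, a genuine flaw in the logical order of your uniqueness step. You propose first ``to classify the indecomposable $\Delta$-filtered modules in the linearly oriented case as exactly the $\Delta(J)$,'' so that any $\Delta$-filtered $M$ decomposes as $\bigoplus_\ell \Delta(J_\ell)$, and \emph{then} to invoke exceptionality to force nestedness of the $J_\ell$. The first claim is false. The $\Delta(J)$ are precisely the $\Delta$-filtered modules with simple socle $L(1)$ (equivalently the nonzero submodules of $P(1)$); a general indecomposable $\Delta$-filtered ${\bf D}$-module need not have simple socle. Indeed, $\Delta$-filtered modules with a fixed $\Delta$-dimension vector are in bijection with orbits of a parabolic on its nilradical in $\mathrm{gl}_{m}$, and these are known to be infinite in number once $n$ is not too small; since there are only $2^{n}-1$ modules $\Delta(J)$ and hence only finitely many direct sums of them with a given $\Delta$-dimension vector, the $\Delta(J)$'s cannot exhaust the indecomposable $\Delta$-filtered modules. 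The repair is to reverse the order: decompose $M$ into indecomposable summands, observe that each summand inherits \emph{both} the $\Delta$-filtration property (by Proposition \ref{HVTheorem2}) \emph{and} exceptionality (a direct summand of an exceptional module is exceptional), and only then prove that an indecomposable exceptional $\Delta$-filtered module has simple socle $L(1)$, hence is a submodule of $P(1)$ and therefore some $\Delta(J)$. That last implication --- exceptional $\Rightarrow$ simple socle --- is the substantive Ext computation, exactly where your sketch is silent: you would need to produce a non-split self-extension from a socle of dimension $\geq 2$. Your plan is repairable, but as written the decomposition $M\cong\bigoplus_\ell\Delta(J_\ell)$ is asserted before exceptionality has been used, and that step does not hold.
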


\subsection{Exceptional $\Delta$-filtered modules with a linear $\Delta$-support}\label{sectlinearsupp}
Let $\dv d$ be a non-zero vector in $\mathbb{N}^n$ with a linear support.
Suppose that the set of vertices of $\supp_\Delta(\dv d)$ is  $\{i, i+1, \cdots, j\}$,
where any vertex in $\{i+1, \cdots, j-1\}$ is a non-admissible vertex of $Q$. We
may suppose that $i$ is a source in $\supp(\dv d)$ and $j$ is a sink in $\supp(\dv d)$. We consider
the following subquiver of $Q$,
$$
\xymatrix{l &\ar[l]\dots & i'\ar[l] \ar[r]&\cdots \ar[r]& i
\ar[r] &i+1 \ar[r]&\dots \ar[r] &j \ar[r]&\cdots \ar[r]&j' }, 
$$
where $i'$ is a source vertex, $l$ and $j'$ are sink vertices and
all other vertices are non-admissible vertices of $Q$. We denote by
$Q'$ the subquiver with the set of vertices $Q'_0=\{i', i'+1. \dots,
j, \cdots, j'\}$. Let $\tilde{Q'}$ be the double of $Q'$ and
$\mathcal{I}'=k\tilde{Q}'\cap \mathcal{I}$. Then
$k\tilde{Q'}/\mathcal{I}'$ is quasi-hereditary as well and its Verma
modules are given by the indecomposable projective $kQ'$-modules.
Note that $Q'$ is linearly oriented of type $A$. By Theorem
\ref{bhrrlinear}, there is a unique $\Delta$-filtered
$k\tilde{Q'}/\mathcal{I}'$-module with $\Delta$-dimension vector
$\dv d$. We denote this $ k\tilde{Q'}/\mathcal{I}'$-module again by
$\Delta(\dv d)$. Note that we can consider $\Delta(\dv d)$ as a
${\bf D}$-module which is not necessarily $\Delta$-filtered. Note
also that all indecomposable projective $kQ'$-modules, except at
$i'$ when $i'$ is interior, coincide respectively to the
indecomposable projective $kQ$-modules with the same simple top.
We construct a ${\bf D}$-module $M(\dv d)$ as follows. If $i$ is not
an interior source of $Q$, we let $M(\dv d)=\Delta(\dv d)$. We now
consider the case where $i$ is an interior source  of $Q$. We write

$$ \Delta(\dv d)=\bigoplus_{s\geq 1}\Delta(I_s), $$
where $I(\dv d)=I_1\supseteq I_2=I(\dv d-\dv d_{I(\dv d)})\supseteq \cdots$ and each $\Delta(I_s)$ is an
indecomposable exceptional $\Delta$-filtered $k\tilde{Q}'/\mathcal{I}'$-module.
For each $I_s$,
define $M_{I_s}=\Delta(I_s)$ if $(\underline{\dim}_\Delta(\Delta(I_s)))_i=0$; otherwise
define $M_{I_s}$ as follows,

\begin{itemize}
\item[(1)] $(M_{I_s})_r=(\Delta(I_s))_r$ for $r\in \{i, \cdots, j'\}$ and
$(M_{I_s})_\gamma=(\Delta(I_s))_\gamma$ for $\gamma\in \{\alpha_s,
\beta_s\}_{s=i}^{j'-1}$,
\item[(2)] $(M_{I_s})_r=(\Delta(I_s))_i$ for $r\in \{l,\cdots, i-1\}$ and
$(M_{I_s})_\gamma=1$ for $\gamma\in \{\alpha_s\}_{s=l}^{i-1}$ and
$(M_{I_s})_\gamma=0$ for $\gamma\in \{ \beta_s\}_{s=l}^{i-1}$,
\end{itemize}
and zero elsewhere. By the construction, we see that $M_{I_s}$ is
$\Delta$-filtered and the set of vertices of $\supp_\Delta(M_{I_s})$
is $I_s$. Now let
$$M(\dv d)=\bigoplus _{s\geq 1} M_{I_s}.$$

\begin{example}
We consider the quivers and use the notation in Example \ref{example1}.
Let $\dv d=(0, 1,1,2,0)$. Then
\begin{itemize}
\item[(1)]  $\supp(\dv d)=\xymatrix{2\ar[r]&3\ar[r]&4}$, which is linearly
oriented.

\item[(2)] Denote $I(\dv d)$ by $I_1$ and so $I_1=\{2, 3, 4\}$ and $\dv d_{I_1}=(0, 1,1,1, 0)$.
Thus $\dv d-\dv d_{I_1}=(0, 0,0, 1,0)$ and $I(\dv d- \dv d_{I_1})=\{4\}$, we denote it by $I_2$. Now
$\Delta(\dv d)=\Delta(I_1)\oplus \Delta(I_2)$ with $\Delta(I_2)=L(4)$ and $\Delta(I_1)$ as follows.
$$\xymatrix{ &&& 4\ar@{=>}[dl]_{\beta_3}&&&  \\ && 3\ar[dr]^{\alpha_3}
  \ar@{=>}[dl]_{\beta_2} && \\ &
  2\ar[dr]^{\alpha_2}  && 4 \ar@{=>}[dl]_{\beta_3} & & \\ &&3\ar[dr]^{\alpha_3} \\ &&& 4 }$$
\end{itemize}
Note that $\Delta(I_1)$ is a $\Delta$-filtered
$k\tilde{Q'}/\mathcal{I}'$-module, but not a $\Delta$-filtered ${\bf
D}$-module, where $Q'=\supp(\dv d)$ and
$\mathcal{I'}=k\tilde{Q'}\cap \mathcal{I}$.

\item[(3)]By our construction
above we have $M_{I_1}$ as follows.
$$\xymatrix{ &&& 4\ar@{=>}[dl]_{\beta_3}&&&  \\ && 3\ar[dr]^{\alpha_3}
  \ar@{=>}[dl]_{\beta_2} && \\ &
  2\ar[dr]^{\alpha_2}\ar[dl]^{\alpha_1}  && 4 \ar@{=>}[dl]_{\beta_3} & & \\ 1&&3\ar[dr]^{\alpha_3} \\ &&& 4 }$$
Since $(\underline{\dim}_\Delta(\Delta(I_2)))_2=0$, we let $M_{I_2}=\Delta(I_2)$.
Now $M(\dv d)=M_{I_1}\oplus L(4)$.
\end{example}

\begin{proposition} \label{linearsupport}
Let $\dv d$ be a non-zero vector in $\mathbb{N}^n$ with a linear support as above. Then the
module $M(\dv d)$ constructed above is the unique (up to isomorphism)
exceptional $\Delta$-filtered module with $\Delta$-dimension vector
$\dv d$.
\end{proposition}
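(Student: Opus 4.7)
The plan is to verify directly that $M(\dv d)$ satisfies each of the three required properties, then establish uniqueness by reducing to Theorem~\ref{bhrrlinear}.

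First I would confirm that $M(\dv d)$ has the prescribed $\Delta$-dimension vector (which is immediate from the construction), and that $M(\dv d)$ is $\Delta$-filtered. For the latter, I would invoke Proposition~\ref{HVTheorem2}(iv): in the right part $\{i, \ldots, j'\}$ the arrow maps of each summand coincide with those of the exceptional module $\Delta(I_s)$ over $k\tilde{Q}'/\mathcal{I}'$, whose injectivity and image-independence are guaranteed by Theorem~\ref{bhrrlinear}; in the left branch $\{l, \ldots, i-1\}$ the $\alpha_s$'s are chosen as identity maps, hence injective, and since each vertex on this branch has only one incoming arrow from $Q$, the intersection condition is vacuous. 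The local relations in $\mathcal{I}$ at the non-admissible left-branch vertices are satisfied because all $\beta_s$'s on that branch are zero.

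For exceptionality, I would exploit the gluing language of Section~\ref{gluing:section}. When $i$ is not an interior source, $M(\dv d) = \Delta(\dv d)$ is supported in $Q'$ and one reduces $\ext^1_{\bf D}$-computations to $\ext^1_{k\tilde{Q}'/\mathcal{I}'}$, where Theorem~\ref{bhrrlinear} gives vanishing. When $i$ is an interior source, each summand $M_{I_s}$ with $i \in I_s$ is obtained as the pushout at the source vertex $i$ of $\Delta(I_s)$ (supported in $Q'$) with the left-branch strip on $\{l, \ldots, i\}$; the associated short exact sequences, together with applications of $\hom_{\bf D}(-, M_{I_t})$ and $\hom_{\bf D}(M_{I_t}, -)$, reduce the cross-$\ext^1$ computations to BHRR vanishings on $Q'$ and to elementary Ext vanishings for the left-branch strips (which have no non-trivial self-extensions since the $\beta_s$ maps are zero).

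For uniqueness, let $N$ be any exceptional $\Delta$-filtered ${\bf D}$-module with $\Delta$-dimension vector $\dv d$. Since $N$ is $\Delta$-filtered, it has the same $kQ$-dimension vector as $M(\dv d)$. If $i$ is not an interior source, the ${\bf D}$-structure of $N$ is supported in $Q'$, so $N$ is an exceptional $\Delta$-filtered $k\tilde{Q}'/\mathcal{I}'$-module and Theorem~\ref{bhrrlinear} gives $N \cong \Delta(\dv d) = M(\dv d)$. If $i$ is an interior source, I would iterate Lemma~\ref{subsource} at $i$ and combine Proposition~\ref{allmodulesareglued} with Lemma~\ref{gluingexceptional} to decompose $N$ as a gluing at the source $i$ of a ${\bf D}$-module supported in $Q'$ with a left-branch ${\bf D}$-module on $\{l, \ldots, i\}$; since each piece is again exceptional, the right piece is pinned down by Theorem~\ref{bhrrlinear} and the left piece is pinned down by a direct classification of exceptional $\Delta$-filtered modules whose only nonzero $\Delta$-multiplicity among sources is at $i$. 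This forces $N \cong M(\dv d)$.

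The main obstacle will be the case when $i$ is an interior source of $Q$, where neither exceptionality nor uniqueness follows directly from the BHRR result on $Q'$. In particular, the hard step of the uniqueness argument is to show that the left-branch extension data of an exceptional $\Delta$-filtered $N$ is forced by the exceptional property to coincide with the identity-maps prescription in the construction of $M(\dv d)$; this will rely on a careful analysis of the gluing morphisms via Lemma~\ref{gluingmaps} together with the $\ext^1$ vanishings supplied by Lemma~\ref{gluingexceptional}.
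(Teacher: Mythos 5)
Your overall strategy — verify $\Delta$-filteredness, prove exceptionality, prove uniqueness separately — diverges from the paper in a way that both misses a shortcut and introduces a genuine gap.

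The paper's proof is much shorter than yours because it does not prove uniqueness by hand at all. It begins with: "In view of Proposition~\ref{irreducibility}, we need only to show that the module $M(\dv d)$ is exceptional." The point is that $\rep_\Delta(\tilde{Q},\mathcal{I},\dv b)$ (with $\dv b=\sum_i d_i\underline{\dim}\Delta(i)$) is irreducible, and by Voigt's lemma an exceptional module has an open orbit there; in an irreducible variety any two open orbits coincide, so the exceptional module is automatically unique. You do not use this, and your uniqueness argument has a real flaw: you propose to decompose $N$ via Proposition~\ref{allmodulesareglued} at the source $i$ into a left-branch piece and a piece supported on $Q'$. But for any $\Delta$-filtered $N$ whose $\Delta$-support lies in $\{i,\dots,j\}$, the cokernel $\cok(\Delta(i)^{d_i}\hookrightarrow N)$ has ordinary support entirely in $\{i+1,\dots,n\}$, so in the gluing of Proposition~\ref{allmodulesareglued} one gets $Y'=0$, hence $N'=\Delta(i)^{d_i}$ and $N''\cong N$. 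The decomposition is trivial and carries no information about the "left-branch extension data"; the classification problem you flag as the hard step is not resolved by this reduction, it is exactly where the attempt stalls. (Note also that $\Delta(i)^{d_i}$, being projective over $kQ$, is supported on all of $\{l,\dots,j'\}$, not just on the left branch $\{l,\dots,i\}$, so it is not the "left-branch ${\bf D}$-module" you were hoping to isolate.)

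For exceptionality your sketch via gluing short exact sequences and Hom/Ext long exact sequences is a genuinely different route from the paper's. The paper argues concretely: given a self-extension $0\to M(\dv d)\to L\to M(\dv d)\to 0$, one observes that on the left branch $\{l,\dots,i-1\}$ the maps $L_{\alpha_r}$ are isomorphisms (because $\underline{\dim}(M(\dv d))_r=d_i$ there), quotients $L$ by the submodule $\Delta(i-1)^{2d_i}$ supported on the left branch to reduce to a self-extension of $\Delta(\dv d)$ over $k\tilde{Q}'/\mathcal{I}'$, invokes Theorem~\ref{bhrrlinear} to split that, and then lifts the splitting back to $L$ explicitly by setting $\eta_r=M_{\alpha_r}\eta_{r+1}L_{\alpha_r}^{-1}$ on the left branch. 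Your version may be salvageable, but as written it is too imprecise to check (e.g. the "left-branch strip" you glue with is not itself a ${\bf D}$-module in any of the categories under discussion, so "has no non-trivial self-extensions since the $\beta_s$ maps are zero" is not well-posed). Even if you fix the exceptionality step, you should replace your uniqueness argument with the irreducibility/open-orbit argument — that is what makes the whole proposition go through cleanly.
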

\begin{proof} 
In view of Proposition \ref{irreducibility},  we need only to show
that the module $M(\dv d)$ is exceptional.  Suppose that $L$ is a
self-extension of $M(\dv d)$. Then $L$ is $\Delta$-filtered. If $i$
is not an interior source of $Q$, then this proposition follows from
Theorem \ref{bhrrlinear}. We consider the case where $i=i'$ is
interior. Since $(\underline{\dim}_\Delta M(\dv
d))_{i}=\underline{\dim}( M(\dv d))_{i}=d_i$, we have $M(\dv
d)_{\alpha_{r}}$ is injective for $l\leq r\leq i-1$, and so is
 $L_{\alpha_r}$ for $l\leq r\leq i-1$.
We have a short exact sequence
$$
\begin{CD}
0 @>>> M(\dv d) @>{\lambda}>>
L @>{\mu}>> M(\dv d) @>>> 0,
\end{CD}
$$
which induces another short exact sequence:
$$
\begin{CD}
0 @>>> \Delta(\dv d) @>{\overline{\lambda}}>>
L/\Delta(i-1)^{2d_{i}} @>{\overline{\mu}}>> \Delta(\dv d) @>>> 0.
\end{CD}
$$
Since $\Delta(\dv d)$ is exceptional, there is a morphism
$\overline{\eta}: L/\Delta(i-1)^{2d_{i}}\lra \Delta(\dv d)$ such
that $\overline{\eta}\overline{\lambda}=Id_{\Delta(\dv d)}$. Now let
$\eta: L\lra M(\dv d)$ be defined by $\eta_r=(\overline{\eta}_r)$
for $i\leq r\leq j'$ and $\eta_{r}=M_{\alpha_r}{\eta}_{r+1}
L_{\alpha_{r}}^{-1}: L_{r}\lra M(\dv d)_{r}$ for $l\leq r\leq i-1$.
We can check that $\eta$ is a morphism from $L$ to $M(\dv d)$ and
$\eta\lambda=Id_{M(\dv d)}$. Thus $L$ is a trivial self-extension of
$M(\dv d)$, and so $M(\dv d)$ is exceptional. This finishes the
proof.
\end{proof}

As a corollary of Proposition \ref{linearsupport}, we have the following
result which is our version of Proposition 1 in \cite {BHRR} for an
exceptional $\Delta$-filtered module with linear $\Delta$-support.

\begin{corollary}\label{extensionlinear} We use the same notation as above.
Let $I$ be a subset of $Q_0$ which is contained in a subquiver  with
linear orientation. Then for any $s\in I$, we have
$\ext^1_{\bf D}(M_I, \Delta(s))=0=\ext^1_{\bf D}(\Delta(s), M_I)$.
\end{corollary}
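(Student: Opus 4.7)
The plan is to deduce the vanishing of the two extension groups from the exceptionality of a single larger module furnished by Proposition~\ref{linearsupport}. Specifically, I would set $\dv d := \underline{\dim}_\Delta(M_I) + \dv{e}_s$, where $\dv{e}_s$ denotes the standard basis vector at $s$. Since $s \in I$, the support of $\dv d$ equals $I$ and is thus linearly oriented by hypothesis, so Proposition~\ref{linearsupport} applies: $M(\dv d)$ is the unique (up to isomorphism) exceptional $\Delta$-filtered ${\bf D}$-module with $\Delta$-dimension vector $\dv d$.

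Next, I would trace through the inductive construction of Section~\ref{sectlinearsupp} for this particular $\dv d$. The first step yields $I_1 = I(\dv d) = I$ and $\dv d_{I_1} = \dv d_I$, so $M_{I_1} = M_I$. The residual vector $\dv d - \dv d_I = \dv{e}_s$ has support $\{s\}$, so $I_2 = \{s\}$ and the iteration terminates. Hence by definition $M(\dv d) = M_I \oplus M_{\{s\}}$. It remains only to identify $M_{\{s\}}$ with $\Delta(s)$: by construction $M_{\{s\}}$ is $\Delta$-filtered with $\Delta$-support $\{s\}$, so its $\Delta$-filtration has length one with sole composition factor $\Delta(s)$, forcing $M_{\{s\}} \cong \Delta(s)$ as ${\bf D}$-modules.

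Putting these together gives $M_I \oplus \Delta(s) \cong M(\dv d)$, which is exceptional. Decomposing $\ext^1_{\bf D}(M_I \oplus \Delta(s), M_I \oplus \Delta(s)) = 0$ into its four direct summands then yields $\ext^1_{\bf D}(M_I, \Delta(s)) = 0 = \ext^1_{\bf D}(\Delta(s), M_I)$, as desired.

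The argument is thus essentially bookkeeping within the machinery of Section~\ref{sectlinearsupp}. The only subtle point I anticipate is the identification $M_{\{s\}} \cong \Delta(s)$ in the case where $s$ coincides with the interior source $i$ of $I$: there the extended construction of $M_{\{s\}}$ produces a module that, on the nose, looks superficially different from the Verma module $\Delta(s)$, yet the $\Delta$-length-one observation resolves this uniformly without case analysis.
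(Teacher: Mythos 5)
Your proof is correct and fills in, in the natural way, the details behind the paper's one-line derivation from Proposition~\ref{linearsupport}. The key bookkeeping points you identify (that $I(\dv d)=I$, that the iteration terminates after $I_2=\{s\}$, and that $M_{\{s\}}\cong\Delta(s)$ because a $\Delta$-filtered module of $\Delta$-length one with sole $\Delta$-composition factor $\Delta(s)$ must be $\Delta(s)$ itself) are exactly what make the argument go through, and your uniform $\Delta$-length observation is a clean way to handle the interior-source case without re-tracing the explicit extension of $\Delta(\{s\})$ along $\{l,\dots,i-1\}$.
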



By the construction of exceptional $\Delta$-filtered modules with a linear $\Delta$-support and
Proposition \ref{linearsupport},
we have a bijection between the set of the indecomposable exceptional $\Delta$-filtered modules
with their $\Delta$-supports contained in $Q'$
and
\[
\mbox{the set of the submodules of}\left\{ \begin{array}{ll} P(j')/P(j'+1) & \mbox{if } j' \mbox{ is interior}, \\
P(j')& \mbox{if } j' \mbox{ is not interior}. \end{array} \right.
\]
 Moreover,
$M_J/S_J = \Delta(J)$, where $J\subseteq \{i', \dots, j'\}$ and
$S_J$ is the submodule of $M_J$, generated by $(M_J)_{i'-1}$. We
also note that, as in \cite{BHRR}, $J\mapsto M_J$ induces a
bijection between non-empty subsets $J\subseteq \{i',\dots,j'\}$ and
isomorphism classes of indecomposable exceptional $\Delta$-filtered
modules with their $\Delta$-supports contained in $Q'$. Note that
this bijection $J\mapsto M_J$ holds too, if $Q'$ has an opposite
orientation.

\subsection{An order on $\Delta$-filtered modules}

In the next section, we will  consider  gluing exceptional modules
with linear $\Delta$-support to form exceptional $\Delta$-filtered
modules with arbitrary $\Delta$-support. The construction will
depend on an order on $\Delta$-filtered modules. In this subsection,
we define this order and prove some  preliminary results.

Let $M$ and $N$ be two $\Delta$-filtered modules with
$(\underline{\dim}_\Delta(M))_i=1=(\underline{\dim}_\Delta(N))_i$ for an admissible vertex $i$. If $i$
is a source, there is an inclusion $\Delta(i) \lra M$ by Lemma
\ref{subsource}, and we define $M\geq_iN$ if the map $\hom(M,N) \lra
\hom(\Delta(i),N) $ is surjective. That is, $M\geq_i N$ if for each
map $f: \Delta(i)\lra N$ there is a map $h: M\lra N$ such that the
diagram
$$\xymatrix{0 \ar[r] & \Delta(i)
  \ar[r] \ar[dr]^f & M \ar[d]^h \\  & & N}$$
commutes. The order does not depend on any particular choice of
inclusion $\Delta(i)\lra M$.

Similarly, if $i$ is a sink, there is a quotient map
$N\lra \Delta(i)$ by Lemma \ref{factorsink}, and we define
$M\geq_i N$, if $\hom(M,N)\lra \hom(M,\Delta(i))$ is surjective.
By $M>_i N$ we mean that $M\geq_i N$ and $N\not\geq_i M$.

These orders are transitive and reflexive on the isomorphism classes
of $\Delta$-filtered modules $M$ with $(\underline{\dim}_\Delta(M))_i=1$. Here,
transitivity is trivial and reflexivity follows from Lemma
\ref{subsource} and Lemma \ref{factorsink}.
We compute these orders for the indecomposable exceptional $\Delta$-filtered
modules $M_J$ with $J$ contained in the linearly oriented subquiver $Q'$ as in
Section \ref{sectlinearsupp}. To simplify the notation we may assume that $i=i'$ and
$j=j'$ are a source vertex and a sink vertex of $Q$, respectively.
Similar to the proof of Proposition \ref{linearsupport}, we can prove the
following using Lemma 4 in \cite{BHRR}.

\begin{lemma} \label{lemma4}
Let $J=\{j_1\succ j_2\succ\dots\succ j_s\}$ and $J'=\{j'_1\succ j'_2\succ\dots\succ j'_t\}$ be
two subsets of $\{i,\dots,j\}$. Then the following conditions are
equivalent:
\begin{itemize}
\item[(i)] There is a monomorphism $M_J\lra M_{J'}$.
\item[(ii)] We have $s\leq t$ and $j_r\preceq j'_r$ for $r=1,\dots,s$.
\end{itemize}
\end{lemma}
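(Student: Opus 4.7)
The plan is to reduce this lemma to Lemma 4 of \cite{BHRR}, which proves the same equivalence for a quiver of type $A$ with linear orientation. The setup has already been arranged so that the hard work is done by the reference: one only needs to see that the ${\bf D}$-module theory of the $M_J$ coincides, in this special situation, with the $k\tilde{Q'}/\mathcal{I}'$-module theory of the corresponding submodules of $P(j)$.

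First, because $i = i'$ is a source of $Q$ (and therefore \emph{not} an interior source of $Q$) and $j = j'$ is a sink of $Q$, the construction of $M_J$ in Section \ref{sectlinearsupp} collapses to $M_J = \Delta(J)$ for every subset $J \subseteq \{i,\dots,j\}$. The full subquiver $Q'$ of $Q$ on $\{i,\dots,j\}$ is linearly oriented of type $A$, and as $k\tilde{Q'}/\mathcal{I}'$-modules the $M_J$ are exactly the submodules of $P(j)$ classified in Section \ref{linear}.

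Next, I would identify $\hom_{\bf D}(M_J, M_{J'})$ with $\hom_{k\tilde{Q'}/\mathcal{I}'}(M_J, M_{J'})$. Both modules vanish outside $Q'_0$, so every arrow of $\tilde{Q}$ touching a vertex outside $Q'_0$ acts as zero on them; moreover, the admissibility of $i$ and $j$ in $Q$ guarantees that the relations of $\mathcal{I}$ restrict at these two endpoints to precisely the relations of $\mathcal{I}'$ (and at the interior vertices of $Q'$, which are already interior vertices of $Q$, the relations agree in both algebras). Hence a ${\bf D}$-linear map between $M_J$ and $M_{J'}$ is the same as a $k\tilde{Q'}/\mathcal{I}'$-linear map, and injectivity, being a linear-algebraic property, transfers automatically.

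Once this identification is in place, the statement is precisely Lemma 4 of \cite{BHRR} applied to the linearly oriented quiver $Q'$ with source $i$ and sink $j$, so (i) $\Leftrightarrow$ (ii) follows. The main obstacle is the bookkeeping in the second step, namely to verify carefully that no hidden discrepancy is introduced by the relations of type (1)--(4) at the endpoints of $Q'$; this is where the admissibility of $i$ and $j$ in $Q$ is essential. Everything else is a direct translation from \cite{BHRR}.
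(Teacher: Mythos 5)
Your reduction to Lemma 4 of \cite{BHRR} is the same strategy the paper has in mind, but the very first step contains a factual error that leaves a genuine gap. You write that because $i = i'$ is a source of $Q$ it is "therefore not an interior source of $Q$", and conclude that $M_J = \Delta(J)$. Being a source does not preclude being interior: in Example \ref{example1} vertex $2$ is an interior source, and in the general situation of Section \ref{sectlinearsupp} the whole point of introducing $l$ and the stretch $\{l,\dots ,i-1\}$ is to handle exactly the case where $i=i'$ is an interior source. When $i$ is an interior source, $M_J$ is a strict extension of $\Delta(J)$: one adjoins copies of $(\Delta(J))_i$ at the vertices $l,\dots ,i-1$ with identity $\alpha$-maps and zero $\beta$-maps. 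In that case $M_J$ does not vanish outside $Q'_0$, so your claimed identification of $\hom_{\bf D}(M_J,M_{J'})$ with $\hom_{k\tilde{Q'}/\mathcal{I}'}(M_J,M_{J'})$ breaks down, and the reduction is incomplete.

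The gap is fixable, and the fix is what the paper's phrase "similar to the proof of Proposition \ref{linearsupport}" is pointing at. When $i$ is an interior source one shows that the existence of a monomorphism is insensitive to the extra tail: the restriction of $M_J$ to $Q'$ is exactly $\Delta(J)$, so any monomorphism $M_J\to M_{J'}$ of ${\bf D}$-modules restricts to a monomorphism $\Delta(J)\to\Delta(J')$ of $k\tilde{Q'}/\mathcal{I}'$-modules; conversely, given a monomorphism $\phi\colon\Delta(J)\to\Delta(J')$ one extends it by setting $\phi'_r=\phi_i$ for $l\le r<i$, which commutes with the identity $\alpha$-maps and the zero $\beta$-maps on that stretch and is still injective at every vertex. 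After this bridge your invocation of Lemma 4 of \cite{BHRR} for the linearly oriented quiver $Q'$ gives the statement. As written, however, the proof asserts a false equality $M_J=\Delta(J)$ and does not address the interior case at all.
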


\begin{lemma} \label{linearsourceorder}
Let $J=\{j_1\succ j_2>\dots \succ j_s\}$ and $J'=\{j'_1\succ j'_2 \succ\dots \succ j'_t\}$ be
two subsets of $\{i,\dots,j\}$, where $j_1=i=j'_1$ is a source in
$Q$. Then $M_{J}\geq_{i} M_{J'}$ if and only if $s\leq t$ and
$j_r\preceq j'_r$ for $r=1,\dots,s$.
\end{lemma}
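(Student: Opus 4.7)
Throughout, the key algebraic observation I exploit is that, since $i$ is a source of $Q$, the Verma module $\Delta(i)$ is $\mathbf{D}$-projective, so $\hom_{\mathbf{D}}(\Delta(i), N) \cong N_i$ for any $\mathbf{D}$-module $N$. Taking $N = M_{J'}$, this space is one-dimensional (spanned by the top of the $\Delta(j'_1) = \Delta(i)$-factor of $M_{J'}$), hence the surjectivity condition defining $M_J \geq_i M_{J'}$ is equivalent to the existence of some $h \in \hom_{\mathbf{D}}(M_J, M_{J'})$ whose component $h_i$ at vertex $i$ is non-zero. I use this reformulation in both implications and the overall strategy is to reduce to Lemma \ref{lemma4}.

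For the ``if'' direction, assume $s \leq t$ and $j_r \preceq j'_r$ for all $r$. Lemma \ref{lemma4} supplies a monomorphism $\phi \colon M_J \hookrightarrow M_{J'}$. Since $(M_J)_i$ is one-dimensional (as $j_1 = i$ occurs exactly once in $J$), injectivity of $\phi$ forces $\phi_i \neq 0$, and $\phi$ itself witnesses $M_J \geq_i M_{J'}$.

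For the ``only if'' direction, suppose $h \colon M_J \to M_{J'}$ has $h_i \neq 0$. I would show that $h$ is automatically a monomorphism and then invoke Lemma \ref{lemma4}. By Proposition \ref{HVTheorem2}(iii), as $kQ$-modules $M_J \cong \bigoplus_{r=1}^s \Delta(j_r)$ and $M_{J'} \cong \bigoplus_{r'=1}^t \Delta(j'_{r'})$, so $h$ corresponds to a scalar matrix $(c_{r, r'})$ with $c_{r, r'} = 0$ unless $j_r \preceq j'_{r'}$. The hypothesis $h_i \neq 0$ gives $c_{1,1} \neq 0$ (the only index pair satisfying $j_r = j'_{r'} = i$). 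Next I use the $\beta$-compatibility of $h$: in the indecomposable modules $M_J$ and $M_{J'}$ the $\beta$-action links consecutive $\Delta$-components non-trivially, as made explicit by the construction in Section \ref{sectlinearsupp}. Applying the relations $\alpha_{\ell-1}\beta_{\ell-1} = \beta_\ell\alpha_\ell$ at non-admissible vertices, one propagates the non-vanishing $c_{1,1} \neq 0$ inductively up the chain, showing $c_{r,r} \neq 0$ for $r = 1, \dots, s$; in particular the chain could not run to $r = s$ if $t < s$, forcing $s \leq t$, while the constraint that $c_{r,r}$ is admissible as a $kQ$-component forces $j_r \preceq j'_r$. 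Simultaneously, one checks that the resulting $h$ is injective at every vertex, so $h$ is a monomorphism.

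The main obstacle is making the $\beta$-propagation rigorous: one must fix bases of $M_J$ and $M_{J'}$ that realise simultaneously the $kQ$-projective decomposition and the indecomposable $\mathbf{D}$-module structure (for example, the bases inherited from the inclusions $M_J \subseteq P(j')$ when $j'$ is not interior), then carefully apply the relations in $\mathcal{I}$ to derive each inductive step. This is essentially the source-vertex analogue of the argument used to prove Lemma~4 in \cite{BHRR}.
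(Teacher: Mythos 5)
Your reformulation of $M_J \geq_i M_{J'}$ in terms of some $h$ with $h_i \neq 0$ is correct (indeed $\Delta(i)$ is $\mathbf{D}$-projective for a source $i$ and $(M_{J'})_i$ is one-dimensional since $i$ is a source), and the ``if'' direction -- produce the monomorphism from Lemma~\ref{lemma4} and note it is automatically non-zero at vertex $i$ -- matches the paper. The ``only if'' direction, however, has a genuine gap. You want to show that $h_i \neq 0$ forces $h$ to be a monomorphism, and then invoke Lemma~\ref{lemma4}; that is the right plan and is exactly what the paper does. But the argument you sketch for it -- fixing bases, writing $h$ as a scalar matrix $(c_{r,r'})$ over the $kQ$-decomposition, and ``propagating'' $c_{1,1}\neq 0$ up the chain via the relations $\alpha_{\ell-1}\beta_{\ell-1}=\beta_\ell\alpha_\ell$ -- is never carried out, and you yourself flag it as ``the main obstacle.'' That is precisely the part that needs a proof.

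What the paper uses instead, and what you are missing, is the socle structure of $M_J$ and $M_{J'}$. Because $J,J'$ lie in the linearly oriented segment $\{i,\dots,j\}$ with $j$ the sink, the modules $M_J$, $M_{J'}$ have simple socle $L(j')$ (they arise as submodules of $P(j')$ resp. $P(j')/P(j'+1)$), and this socle is contained in the bottom factor $\Delta(i)\subseteq M_J$. Consequently, if $h\colon M_J\to M_{J'}$ had $\ker(h)\neq 0$, then $\ker(h)\cap \soc(M_J)\neq 0$, so $\ker(h)\cap\Delta(i)\neq 0$; but then $h|_{\Delta(i)}$ would kill $\soc(\Delta(i))=L(j')$, forcing its image to have zero socle component at $j'$ -- impossible inside $M_{J'}$ whose socle is exactly $L(j')$ -- unless $h|_{\Delta(i)}=0$, contradicting $h_i\neq 0$. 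Hence $h$ is injective and Lemma~\ref{lemma4} finishes the proof. Your propagation scheme, even if it could be pushed through, also entangles the derivation of the combinatorial conditions $s\leq t$ and $j_r\preceq j'_r$ with the injectivity argument, which is unnecessary once you have the reduction to monomorphisms; those conditions come cleanly out of Lemma~\ref{lemma4} and should not be re-derived by hand. I'd replace the basis-propagation paragraph with the socle argument above.
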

\begin{proof}
We have $M_J\geq_i M_{J'}$ if and only if there exists a
monomorphism $M_J\lra M_{J'}$, since for any morphism $f$ from $M_J$
to $M_{J'}$,  $\soc(M_J)\subseteq \Delta(i)$ and $\ker(f)\cap
\soc(M_J)\neq 0$ if $\ker(f)\neq 0$. Now the lemma follows from our
construction of $M_J$ and $M_{J'}$ and Lemma \ref{lemma4}.
\end{proof}

\begin{lemma} \label{linearsinkorder}
Let $J=\{j_1\prec j_2\prec \dots\prec j_s\}$ and $J'=\{j'_1 \prec j'_2\prec \dots\prec j'_t\}$ be
two subsets of $\{i,\dots,j\}$, where $j_1=j=j'_1$ is a sink in $Q$.
Then $M_{J}\geq_j M_{J'}$ if and only if  $s\geq t$ and $j_{r}\preceq
j'_{r}$ for $r=1,\dots,t$.
\end{lemma}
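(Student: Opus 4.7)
The proof is a direct dualization of Lemma \ref{linearsourceorder}. First I would show that $M_J\geq_j M_{J'}$ is equivalent to the existence of a surjection $M_J\lra M_{J'}$. In the source case of Lemma \ref{linearsourceorder}, the argument rests on $\soc(M_J)\subseteq\Delta(i)=L(i)$ being simple; dually, for a sink $j$ with $(\underline{\dim}_\Delta(M_{J'}))_j=1$, Lemma \ref{factorsink} yields a canonical quotient $\pi_{J'}\colon M_{J'}\lra\Delta(j)=L(j)$, and one verifies from the construction in Section \ref{sectlinearsupp} that in fact $\ttop(M_{J'})=L(j)$. Since $\hom_{\bf D}(M_J,\Delta(j))$ is one-dimensional, spanned by the canonical quotient $\pi_J\colon M_J\lra\Delta(j)$, the condition $M_J\geq_j M_{J'}$ becomes: there is $h\colon M_J\lra M_{J'}$ with $\pi_{J'}\circ h=\pi_J$ up to scalar. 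If $h$ were not surjective, then $\coker(h)\neq 0$ would have top $L(j)$ (since $\ttop(M_{J'})=L(j)$), so $\pi_{J'}$ would factor through $\coker(h)$, forcing $\pi_{J'}\circ h=0$. Hence $h$ must be surjective.

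Next I would translate the existence of a surjection $M_J\lra M_{J'}$ into the combinatorial condition. This is the epimorphism analogue of Lemma \ref{lemma4}, which can be proved by repeating its argument with quotient modules in place of submodules, or by invoking the standard anti-involution of ${\bf D}$ that swaps each arrow with its reverse. The anti-involution induces a contravariant self-equivalence on ${\bf D}$-modules exchanging $\Delta$-filtered with $\nabla$-filtered modules and epimorphisms with monomorphisms; applying the analogue of Lemma \ref{lemma4} to the $\nabla$-filtered duals and transporting back yields the claim. Either way, after accounting for the swap of $J$ and $J'$ and the reversal of indexing, the outcome is exactly $s\geq t$ and $j_r\preceq j'_r$ for $r=1,\ldots,t$.

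The main obstacle is the careful bookkeeping of indexing conventions: Lemma \ref{lemma4} writes subsets in $\succ$-decreasing order from a source at the top, whereas Lemma \ref{linearsinkorder} writes them in $\prec$-increasing order from a sink at the bottom, and moreover the dualization swaps the roles of $J$ and $J'$. A secondary subtlety is verifying $\ttop(M_{J'})=L(j)$; this follows because $M_{J'}$ is cyclic at $j$, as can be seen by inspection of the construction in Section \ref{sectlinearsupp}, where $M_{J'}$ sits as a submodule (respectively quotient-extension) of the indecomposable projective $P(j)$ whose top is $L(j)$. Once these pieces are in place, the proof runs parallel to that of Lemma \ref{linearsourceorder}.
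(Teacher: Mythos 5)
Your first reduction---that $M_J\geq_j M_{J'}$ is equivalent to the existence of a \emph{surjection} $M_J\lra M_{J'}$---is false, and the supporting claim $\ttop(M_{J'})=L(j)$ does not hold in general. Being a submodule of the indecomposable projective $P(j)$ (which does have simple top $L(j)$) does not force $M_{J'}$ to have simple top or to be cyclic at $j$. For a small counterexample, take $Q=1\to 2\to 3$ with $3$ the sink, and let $J=\{3,2\}$ and $J'=\{3,1\}$, written in $\prec$-increasing order from the sink, so $s=t=2$, $j_1=j'_1=3$ and $j_2=2\preceq 1=j'_2$. The hypotheses of the lemma hold, so $M_J\geq_3 M_{J'}$ must be true. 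But $\underline{\dim}\,M_J=(0,1,2)$ while $\underline{\dim}\,M_{J'}=(1,1,2)$, so there is no surjection $M_J\lra M_{J'}$. A direct computation inside $P(3)$ shows $\ttop(M_{J'})\cong L(1)\oplus L(3)$ and $M_J\subset M_{J'}$; it is this \emph{inclusion}, not any surjection, that witnesses $M_J\geq_3 M_{J'}$, since it carries the top $\Delta(3)$ of $M_J$ onto the top $\Delta(3)$ of $M_{J'}$.

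The error is structural, not a matter of bookkeeping. If $h$ were forced to be surjective, then $M_{J'}\cong M_J/\ker(h)\cong M_{\{j_1,\dots,j_{s'}\}}$ for some $s'$, so $J'$ would have to equal $\{j_1,\dots,j_{s'}\}$ for some $s'$; that is strictly stronger than the stated condition $s\geq t$, $j_r\preceq j'_r$, so any ``epimorphism analogue of Lemma \ref{lemma4}'' applied to a genuine surjection cannot produce the stated answer. The paper's proof avoids this entirely: for an arbitrary $h\colon M_J\lra M_{J'}$ through which $\pi_J$ factors, it uses that ${\bf D}$ has global dimension at most two to see that both $\ker(h)$ and $\im(h)$ are $\Delta$-filtered, identifies $\im(h)\cong M_{\{j_1,\dots,j_{s'}\}}$ as a $\Delta$-filtered \emph{submodule} of $M_{J'}$, applies the monomorphism criterion of Lemma \ref{lemma4} to $\im(h)\hookrightarrow M_{J'}$, and only then uses the factorization through $\Delta(j)$ to conclude $s'=t$. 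Your dualization and cyclicity arguments are built on the faulty reduction and would need to be replaced by something along these lines.
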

\begin{proof}
From Lemma \ref{factorsink}, we have a surjection $M_J\lra
\Delta(j)$. Assume that this surjection factors through a map $f:M_J\lra
M_{J'}$. By Proposition \ref{HVTheorem1},
we have $\im(f)$ is a $\Delta$-filtered submodule of $M_{J'}$.
Thus $\ker(f)$ is $\Delta$-filtered, following again from Proposition \ref{HVTheorem1}.
Since $\ker(f)$ is
$\Delta$-filtered, by Lemma \ref{linearsourceorder},  $\ker(f)=M_{\{j_{s'+1},\dots,j_s\}}$
for some $s'\in \{1,\dots,s\}$. Hence
$\im(f)=M_{\{j_1,\dots,j_{s'}\}}$. Using  Lemma \ref{lemma4}, we have
$s'\leq t$ and $j_{s'-i}\preceq j'_{t-i}$ for $i=0,\dots,s'-1$. But
since $f$ maps the top $\Delta(j)$ in $M_J$ to the top $\Delta(j)$ in
$M_{J'}$, we see that $s'=t$. Therefore $s\geq t$ and $j_i\preceq j'_i$
for $i=1,\dots,t$.

The converse follows from the construction of $M_J$ and $M_{J'}$.
\end{proof}

As a corollary of Proposition \ref{linearsupport} and Lemmas \ref{linearsourceorder}
 and \ref{linearsinkorder}, we have the following:

\begin{corollary}\label{lineartotalorder}
Let $\dv d$ be a non-zero vector in $\mathbb{N}^n$ with $\supp(\dv d)_0\subseteq \{i, \cdots, j\}$.
Then the indecomposable direct summands $X$ of the exceptional $\Delta$-filtered
module $M(\dv d)$ with $(\underline{\dim}_\Delta(X))_i=1$ are totally ordered using
$\geq_i$. Similarly, we have a total order using  $\geq_j$.
\end{corollary}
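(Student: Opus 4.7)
Proof plan: I would deduce the corollary directly from the explicit construction of $M(\dv d)$ in Section \ref{sectlinearsupp} together with the already-established Lemmas \ref{linearsourceorder} and \ref{linearsinkorder}.

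First I would unpack the indecomposable summands. By construction, $M(\dv d) = \bigoplus_{s \geq 1} M_{I_s}$, where the sets $I_s$ are produced by the descending recursion $I_1 = \supp(\dv d)_0$ and $I_{s+1} = \supp(\dv d - \dv d_{I_1} - \cdots - \dv d_{I_s})_0$, so that $I_1 \supseteq I_2 \supseteq \cdots$. An elementary count gives $i \in I_s$ if and only if $d_i \geq s$; hence the indecomposable summands of $M(\dv d)$ having $\Delta$-multiplicity one at $i$ are exactly $M_{I_1}, \ldots, M_{I_{d_i}}$, with the corresponding index sets forming a nested chain $I_1 \supseteq \cdots \supseteq I_{d_i}$ inside $\{i, \ldots, j\}$, each containing the source $i$.

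Next I would apply Lemma \ref{linearsourceorder} to an arbitrary pair of such subsets $I_s, I_t$ with $s \leq t \leq d_i$. Both have common $\succ$-maximum $i$, and since $I_t \subseteq I_s$, enumerating them in $\succ$-descending order yields $|I_t| \leq |I_s|$ together with the entry-wise inequality $j_r \preceq j'_r$ required by the lemma: deleting elements from $I_s$ can only shift the $r$-th enumerated element to a position weakly further from $i$, i.e., weakly down in the $\succ$ order. The lemma then gives $M_{I_t} \geq_i M_{I_s}$, and chaining these inequalities across $s = 1, \ldots, d_i$ yields the total ordering $M_{I_{d_i}} \geq_i \cdots \geq_i M_{I_1}$.

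The argument for $\geq_j$ is formally identical, using Lemma \ref{linearsinkorder} in place of Lemma \ref{linearsourceorder}. The relevant summands are indexed by the descending chain $I_1 \supseteq \cdots \supseteq I_{d_j}$, each containing the sink $j$; enumerating in $\prec$-ascending order from $j$, the combinatorial criterion of Lemma \ref{linearsinkorder} is again immediate from the nesting, but this time giving $M_{I_s} \geq_j M_{I_t}$ for $s \leq t$ (the larger set is now on the left of ``$\geq_j$''). I do not foresee any genuine obstacle: both lemmas have already isolated the nontrivial content into a purely combinatorial criterion which the nesting of the $I_s$ manifestly satisfies; the only subtlety is keeping track of which direction of nesting corresponds to which direction of the order.
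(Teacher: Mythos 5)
Your proposal is correct and is essentially the argument the paper intends when it states the corollary as an immediate consequence of Proposition \ref{linearsupport} and Lemmas \ref{linearsourceorder} and \ref{linearsinkorder}: the index sets $I_s$ of the indecomposable summands form a nested chain, which is exactly the combinatorial condition in those lemmas, so any two relevant summands are $\geq_i$-comparable (resp. $\geq_j$-comparable). The sign-bookkeeping you carry out (nesting reverses the $\geq_i$ direction but preserves the $\geq_j$ direction) is the only point of care, and you have it right.
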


In the following we give some examples on the order defined above.
\begin{example} We consider the quiver and use the notation in Example
\ref{example1}.
Let $M$ and $N$, respectively,  be the modules as follows.

$$\xymatrix{ &&& 4\ar@{=>}[dl]_{\beta_3} &&&
2\ar[dl]_{\alpha_1}\ar[dr]^{\alpha_2}
&& 4\ar@{=>}[dl]_{\beta_3}
\\ && 3\ar[dr]^{\alpha_3}
  \ar@{=>}[dl]_{\beta_2} && & 1&&3\ar[dr]^{\alpha_3} \\ &
  2\ar[dr]^{\alpha_2} \ar[dl]_{\alpha_1} && 4 \ar@{=>}[dl]_{\beta_3} & & & &&4 &&\\ 1&&3\ar[dr]^{\alpha_3} \\ &&& 4 }$$
It is clear that we have an embedding $N$ into $M$, which sends the submodule $\Delta(2)$ of $N$ to
the submodule $\Delta(2)$ of $M$. On the other hand, we have a morphism from $M$ to $N$, which sends the top $\Delta(4)$ of $M$ to
the top $\Delta(4)$ of $N$. Therefore $M\geq_4 N$ and $N\geq_2 M$.
\end{example}

\section{Exceptional $\Delta$-filtered modules}
\label{exceptional:section}

This section is devoted to proving the following main result.

\begin{theorem}\label{maintheorem1}
Given a non-zero vector $\dv d\in \mathbb{N}^n$, there exists a unique (up to
isomorphism) exceptional $\Delta$-filtered  ${\bf D}$-module $M(\dv d)$ with $\Delta$-dimension
vector $\dv d$.
\end{theorem}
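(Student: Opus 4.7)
For uniqueness, I would invoke irreducibility of the representation variety. By Proposition \ref{irreducibility}(2), $\rep_\Delta(\tilde{Q}, \mathcal{I}, \dv d)$ is irreducible. A standard Voigt-type inequality
$$\dim \rep_\Delta(\tilde{Q}, \mathcal{I}, \dv d) - \dim \mathrm{orbit}(M) \leq \dim \ext^1_{\bf D}(M, M)$$
shows that an exceptional module has open $\gl(\dv d)$-orbit, and an irreducible variety contains at most one open orbit; hence any two exceptional $\Delta$-filtered modules with $\Delta$-dimension vector $\dv d$ are isomorphic.

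For existence, I would argue by induction on the number of interior admissible vertices of $Q$ lying in $\supp(\dv d)$. In the base case this number is zero, so $\supp(\dv d)$ is linearly oriented inside $Q$ and Proposition \ref{linearsupport} directly supplies $M(\dv d)$. For the inductive step, fix such an interior admissible vertex $i$ with positive $\Delta$-dimensions on both sides of $i$, and decompose $\dv d = \dv d' + \dv d''$ with $\supp(\dv d') \subseteq \{1,\dots,i\}$, $\supp(\dv d'') \subseteq \{i,\dots,n\}$ and $d'_i = d''_i = d_i$. By the inductive hypothesis, exceptional modules $M' = M(\dv d')$ and $M'' = M(\dv d'')$ exist.

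I would then construct $M(\dv d)$ by gluing $M'$ and $M''$ at $i$ according to a $\geq_i$-compatible matching. Decompose $M'$ and $M''$ into indecomposable direct summands, and retain those with positive $\Delta$-dimension at $i$. By an extension of Corollary \ref{lineartotalorder} — the inductive exceptionality of $M'$ and $M''$ plus the structure coming from the gluing in the previous inductive step forces the retained summands of each to carry $\Delta$-dimension exactly one at $i$ and to form a $\geq_i$-totally ordered list of length $d_i$ — we obtain chains $X'_1 >_i \cdots >_i X'_{d_i}$ and $X''_1 >_i \cdots >_i X''_{d_i}$. Pair them in matching order and glue each pair $(X'_k, X''_k)$ at $i$ using the canonical maps from Lemma \ref{subsource} or Lemma \ref{factorsink}; take the direct sum with the summands of $M'$ and $M''$ that do not touch $i$ to define $M(\dv d)$. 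This $M(\dv d)$ is $\Delta$-filtered by the gluing lemma of Section \ref{gluing:section}.

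The main obstacle is verifying that this carefully glued $M(\dv d)$ is exceptional. The strategy is to apply $\hom_{\bf D}(M(\dv d), -)$ and $\hom_{\bf D}(-, M(\dv d))$ to the defining short exact sequence of the gluing, use Lemma \ref{extinjective} to dispose of the extension terms involving $\Delta(i)$, and reduce the vanishing of $\ext^1_{\bf D}(M(\dv d), M(\dv d))$ to two ingredients: first, a vanishing statement $\ext^1_{\bf D}(M', M'') = 0 = \ext^1_{\bf D}(M'', M')$, which uses the inductive exceptionality together with the essentially disjoint supports of $M'$ and $M''$ away from $i$; and second, surjectivity of natural maps on morphism spaces of the form $\hom_{\bf D}(M', M'') \lra \hom_{\bf D}(\Delta(i)^{d_i}, \Delta(i)^{d_i})$ and its mirror. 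This second surjectivity is precisely what the $\geq_i$-compatible pairing of the summands is engineered to guarantee, and its verification — powered by the explicit description of morphisms between linear-support indecomposable exceptionals furnished by Lemmas \ref{linearsourceorder} and \ref{linearsinkorder} — is the technical heart of the proof.
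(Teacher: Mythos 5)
Your overall strategy is the same as the paper's: prove uniqueness via Voigt's lemma plus irreducibility of $\rep_\Delta(\tilde{Q},\mathcal{I},\dv d)$, and prove existence by inductively gluing exceptional pieces at an interior admissible vertex $i$, pairing the indecomposable summands that touch $i$ compatibly with the order $\geq_i$, and then checking $\ext^1=0$ by applying Hom to the gluing sequence and invoking Lemma \ref{extinjective} together with Lemmas \ref{linearsourceorder}, \ref{linearsinkorder}. Two points need attention.

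First, and most importantly, your pairing is in the wrong direction. With both chains ordered decreasingly, $X'_1 >_i \cdots >_i X'_{d_i}$ and $X''_1 >_i \cdots >_i X''_{d_i}$, you propose to glue $X'_k$ with $X''_k$. But the exceptionality criterion (the paper's Lemma \ref{constructionexceptional}) demands that if the left parts satisfy $M' \geq_i N'$, then the right parts must satisfy $N'' \geq_i M''$ — the orders must be reversed on the two sides. So the correct matching glues $X'_k$ with $X''_{d_i+1-k}$, the largest with the smallest. Indeed this is exactly what the paper does: $K^{s+1}_l = K^s_l \cup J^{s+1}_{d_{i_s}+1-l}$. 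With your same-index pairing, for $k<m$ you would have the left parts ordered $X'_k \geq_i X'_m$ and the right parts also ordered $X''_k \geq_i X''_m$, which violates the criterion unless the relevant pieces are already isomorphic; the vanishing of $\ext^1$ you intend to prove in the final paragraph would fail. A simple example on $A_3$ with the middle vertex a source and $\dv d=(1,2,1)$ already exhibits this: the reverse pairing produces $M_{\{1,2\}}\oplus M_{\{2,3\}}$ (the exceptional module), whereas your pairing produces $\Delta(2)\oplus M_{\{1,2,3\}}$, which is not exceptional.

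Second, you assert that the indecomposable summands of the inductively obtained $M'$ that touch $i$ all carry $\Delta$-dimension exactly one at $i$ and form a $\geq_i$-total chain, by ``an extension of Corollary \ref{lineartotalorder}.'' That corollary is stated only for modules of linear $\Delta$-support, and the extension to the non-linear $M'$ is not automatic: it needs a dedicated argument describing when glued modules are comparable in $\geq_i$. This is the content of the paper's Lemma \ref{generalorder}, which requires its own inductive proof and the hypothesis that the constituent linear pieces are pairwise comparable. Without some version of that lemma, the existence and well-definedness of the chain $X'_1 >_i \cdots >_i X'_{d_i}$ is a genuine gap in your inductive step, not a routine extension of the linear case.
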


Let $\dv d$ be a non-zero vector in $\mathbb{N}^n$. We construct an
exceptional $\Delta$-filtered representation $M(\dv d)$, with
$\Delta$-dimension vector $\dv d$. Following Lemma
\ref{gluingexceptional}, we see that any indecomposable exceptional
module is obtained by gluing exceptional modules with linear
$\Delta$-support. In the following we show how to glue the
exceptional $\Delta$-filtered modules with linear $\Delta$-support
to obtain exceptional $\Delta$-filtered modules with arbitrary
$\Delta$-support.


Let $i_1<i_2\cdots <i_t$ be a complete list of interior admissible
vertices in $Q$. Let $i_0=1$ and
$i_{t+1}=n$ be the end vertices of $Q$. Let $\dv d^s$, for $s=1,\dots,t+1$, be the
vector given by $(\dv d^s)_j=d_j$ if
$j\in\{i_{s-1},\dots,i_s\}$ and zero elsewhere. Note that each support $\supp(\dv d^s)$
is a linearly oriented subquiver of $Q$.

Let $M(\dv d^s)=\bigoplus_lM_{J^s_l}$ be the exceptional
$\Delta$-filtered module from Proposition \ref{linearsupport}. We
fix an ordering on the indecomposable direct summands of $M(\dv
d^s)$ as follows. For $s=1$, we assume that
$(\underline{\dim}_\Delta(M_{J^1_l}))_{i_1}=1$ for $l=1,\dots,d_{i_1}$
and that $M_{J^1_l}\geq_{i_1} M_{J^1_{l+1}}$ for
$l=1,\dots,d_{i_1}-1$. For each $s>1$, we assume that
$(\underline{\dim}_\Delta(M_{J^{s}_l}))_{i_{s-1}}=1$ for
$l=1,\dots,d_{i_{s-1}}$ and that
$M_{J^{s}_l}\geq_{i_{s-1}}M_{J^{s}_{l+1}}$ for
$l=1,\dots,d_{i_{s-1}}$. This is possible by Lemmas
\ref{linearsourceorder} and \ref{linearsinkorder}, and the fact that
for each $s$ the subsets $J^{s}_l$ are totally ordered by inclusion.
For $l>d_{i_s}$, $s\geq 1$, we fix an arbitrary order.

Let $\dv c^s$ be the $\Delta$-dimension vector given by $\dv
(c^s)_j=d_j$ for $j\in \{1,\dots,i_s\}$ and zero elsewhere. Here $\dv
c^{t+1}=\dv d$ and $\dv c^1=\dv d^1$. We will inductively construct
an exceptional $\Delta$-filtered module $M(\dv c^s)$ with
$\Delta$-dimension vector $\dv c^s$ for all $s=1,\dots,t+1$. If
$t=0$, then $Q$ is linearly oriented, and we let $M(\dv
d)=\Delta(\dv d)$ as in \cite{BHRR}. Now suppose that $t>0$. For
$s=1$, we let $M(\dv c^1)=M(\dv d^1)$. Suppose that we have $M(\dv
c^s)$. We construct $M(\dv c^{s+1})$ by gluing $M(\dv c^s)$ and
$M(\dv d^{s+1})$ at vertex $i_s$ as follows.

First, we decompose $M(\dv c^s)$ into indecomposable direct summands
$$M(\dv c^s)=\bigoplus_l M_{K^{s}_l},$$ where  $K^{
s}_l\subseteq\{1,\dots,i_s\}$ and
$(\underline{\dim}_\Delta(M_{K^{s}_l}) )_j=1$ if $j\in K^{ s}_l$ and
zero elsewhere. We will show that $M_{K^{ s}_l}$ is the unique, up
to isomorphism, indecomposable exceptional $\Delta$-filtered module
with  ${K^{s}_l}$ as the set of vertices of its $\Delta$-support.
Moreover, we reorder the indecomposable direct summands of $M(\dv
c^s)$ such that
\begin{itemize}
\item[(1)] $(\underline{\dim}_\Delta( M_{K^s_l} ) )_{i_s}=1$ for $l=1,\dots,d_{i_s}$;
\item[(2)] $M_{K^s_l} \geq_{i_s} M_{K^s_{l+1}}$ for $l=1,\dots,d_{i_s}-1$.
\end{itemize}
Unlike the case of linear support, not all subsets of $\{1,\dots,n\}$
will occur as the support of an indecomposable exceptional
$\Delta$-filtered module.


If $d_{i_s}=0$, by induction we have $M(\dv d)=M(\dv d')\oplus M(\dv d'')$, where 
$({\dv d}')_i=d_i$ and $({\dv d}'')_j=d_j$ for
$1\leq i< i_s$, $i_s<j\leq n$  and zero elsewhere. Now suppose that $d_{i_s}\not= 0$ and define
$M(\dv c^{s+1})$ as
$$M(\dv c^{s+1})=\bigoplus_l M_{K^{ {s+1}}_l},$$ where $K^{{s+1}}_l= K_l^s\cup J_{{d_{i_s}+1-l}}^{s+1}$ and
$M_{K^{{s+1}}_l}$ is obtained by gluing $M_{K^s_l}$ and
$M_{J_{d_{i_s+1-l}}^{s+1}}$ at $i_s$ for $l=1, \dots, d_{i_s}$, and
$\bigoplus_{l>d_{i_s}} M_{K^{{s+1}}_l}$ is the direct sum of all the
terms $M_{K^s_l}$ and $M_{J^{s+1}_l}$ with $l>d_{i_s}$. For
$l>d_{i_s}$, we have that $K^{s+1}_l$ is either $K^s_{l'}$ or
$J^{s+1}_{l''}$ for some $l', \; l''>d_{i_s}$.
By the construction and the properties of gluing, we get the
following proposition.

\begin{proposition}\label{basicproperties}
The representation $M(\dv c^{s})$ constructed above is
$\Delta$-filtered and each of its direct summand $M_{K^{ s}_l}$ is
indecomposable.
\end{proposition}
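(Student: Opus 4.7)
The plan is to prove both claims by induction on $s$. The base case $s=1$ is immediate from Proposition \ref{linearsupport}: $M(\dv c^1)=M(\dv d^1)$ is $\Delta$-filtered, and its direct summands $M_{J^1_l}$ are indecomposable by the bijection recorded at the end of Section \ref{sectlinearsupp}.

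For the inductive step, assume that $M(\dv c^s)=\bigoplus_l M_{K^s_l}$ is $\Delta$-filtered with each $M_{K^s_l}$ indecomposable. By construction $M(\dv c^{s+1})$ decomposes into two kinds of summands: for $l\leq d_{i_s}$ the summand $M_{K^{s+1}_l}$ is obtained by gluing $M_{K^s_l}$ and $M_{J^{s+1}_{d_{i_s}+1-l}}$ at the admissible vertex $i_s$; for $l>d_{i_s}$ the summand is one of the leftover indecomposable $\Delta$-filtered summands of $M(\dv c^s)$ or of $M(\dv d^{s+1})$. That $M(\dv c^{s+1})$ is $\Delta$-filtered follows at once: each glued piece is $\Delta$-filtered by the gluing lemma of Section \ref{gluing:section}, the leftover summands are $\Delta$-filtered by the inductive hypothesis and Proposition \ref{linearsupport}, and $\Delta$-filteredness is closed under direct sums. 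The leftover summands are visibly indecomposable, so the remaining task is to prove indecomposability of each $M_{K^{s+1}_l}$ with $l\leq d_{i_s}$.

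Set $M=M_{K^{s+1}_l}$, $M'=M_{K^s_l}$ and $M''=M_{J^{s+1}_{d_{i_s}+1-l}}$, both indecomposable with local endomorphism ring. I would show that $\smorphism_{\bf D}(M)$ is local. Given $f\in\smorphism_{\bf D}(M)$, Lemma \ref{gluingmaps} produces induced endomorphisms $f'\in\smorphism_{\bf D}(M')$ and $f''\in\smorphism_{\bf D}(M'')$. Since $(\underline{\dim}_\Delta(M'))_{i_s}=(\underline{\dim}_\Delta(M''))_{i_s}=1$, Lemmas \ref{subsource} and \ref{factorsink} furnish canonical copies of $\Delta(i_s)$ inside (respectively as a quotient of) $M'$ and $M''$ that are identified in the gluing; using the quasi-hereditary property $\smorphism_{\bf D}(\Delta(i_s))=k$, the restrictions of $f'$ and $f''$ to this shared $\Delta(i_s)$ are multiplication by one and the same scalar $c\in k$. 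Because $\smorphism_{\bf D}(M')$ is finite-dimensional and local, the surjective composition $\smorphism_{\bf D}(M')\to \smorphism_{\bf D}(\Delta(i_s))=k$ identifies the residue of $f'$ modulo the Jacobson radical with $c$, so $f'$ is invertible iff $c\neq 0$, and likewise for $f''$. In the case $c\neq 0$ the five-lemma applied to the gluing short exact sequence shows $f$ is an isomorphism; in the case $c=0$ a common power $f^N$ vanishes on $M'$ and $M''$, hence on $M$ (for $i_s$ a source because $M=M'+M''$, for $i_s$ a sink because $M\hookrightarrow M'\oplus M''$). Thus $\smorphism_{\bf D}(M)$ is local, and $M$ is indecomposable. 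The main technical point is the coincidence of the two scalar actions on the shared $\Delta(i_s)$ and the fact that this single scalar detects invertibility on each side; both reduce to the uniqueness statements of Lemmas \ref{subsource} and \ref{factorsink} and the fact that $\smorphism_{\bf D}(\Delta(i_s))=k$.
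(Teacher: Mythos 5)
Your argument is correct and fills in a step the paper disposes of with a single sentence (``By the construction and the properties of gluing, we get the following proposition''). The $\Delta$-filteredness claim is indeed immediate from the gluing lemma in Section~\ref{gluing:section} together with the induction hypothesis and Proposition~\ref{linearsupport}. For indecomposability, your local-endomorphism-ring argument via Lemma~\ref{gluingmaps} is the natural elaboration of what the authors must have had in mind: the two induced endomorphisms agree on the shared copy of $\Delta(i_s)$ because $\Delta(i_s)$ is the \emph{unique} submodule (source case, Lemma~\ref{subsource}) or has the \emph{unique} kernel $Y$ (sink case, Lemma~\ref{factorsink}), which makes the kernel characteristic and the induced map on the quotient well-defined; the common scalar $c$ then detects invertibility on both sides because $\smorphism_{\bf D}(\Delta(i_s))=\field$ forces the surjection $\smorphism_{\bf D}(M')\to\field$ to have kernel exactly the radical, and the dichotomy $c\neq 0$ (five lemma) versus $c=0$ (common nilpotency, transported via $M=M'+M''$ or $M\hookrightarrow M'\oplus M''$ depending on whether $i_s$ is a source or a sink) shows $\smorphism_{\bf D}(M)$ has no idempotents other than $0,1$. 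Since the paper gives no detailed proof to compare against, there is no genuine divergence of method to report; your proof is a sound and complete justification of the proposition.
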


\begin{example}\label{arbitraygluing}
We consider the quiver and use the notation  in Example
\ref{example1} where $i_1=2$ and $i_2=4$. Let $\dv d=(1, 2, 1, 3,
2)$.
 We decompose $\dv d$ into its  subvectors,
 $\dv d^1=(1, 2, 0, 0, 0)$, $\dv d^2=(0, 2, 1, 3, 0)$ and $\dv d^3=(0,  0, 0,  3, 2)$.
 Each of these subvectors has a linear support.

(1) Then we have $M(\dv d^1)=M_{J^1_1}\oplus M_{J^1_2}$, where $J^1_1=\{ 2\}\subset J^1_2=\{1, 2\}$.
Thus $M_{J^1_1}>_2M_{J^1_2}$ and they are as follows, respectively.
$$\xymatrix{
&2 \ar[dl]_{\alpha_1}\ar[dr]^{\alpha_2} &&&& 1 \ar@{=>}[dr]^{\beta_1}&&&&\\
1&& 3\ar[dr]^{\alpha_3} &&&& 2 \ar[dl]_{\alpha_1}\ar[dr]^{\alpha_2}\\
&&&4 && 1&&3\ar[dr]^{\alpha_3} \\&&&&&&&&4
}$$

The module $M(\dv d^2)=M_{J^2_1}\oplus M_{J^2_2}\oplus M_{J^2_3}$, where $J^2_1=\{2, 4\}$, $J^2_2=\{2, 3, 4 \}$ and $J^2_3=\{4\}$.
Thus $M_{J^2_1}>_2M_{J^2_2}$ and $M_{J^2_1}$, $M_{J^2_2}$ and $ M_{J^2_3}$ are
as follows, respectively.
$$\xymatrix{
& 2\ar[dl]_{\alpha_1}\ar[dr]^{\alpha_2} &&4\ar@{=>}[dl]_{\beta_3}&&&&& 4\ar@{=>}[dl]_{\beta_3} &&4\\
1&& 3\ar[dr]^{\alpha_3} &&&&& 3\ar[dr]^{\alpha_3} \ar@{=>}[dl]_{\beta_2}&\\
&&&4 &&& 2\ar[dr]^{\alpha_2} \ar[dl]_{\alpha_1} && 4 \ar@{=>}[dl]_{\beta_3}\\
&&&&& 1&&3 \ar[dr]^{\alpha_3} \\&&&&&&&& 4
}$$

The module $M(\dv d^3)=M_{J^3_1}\oplus M_{J^3_2}\oplus M_{J^3_3}$, where $J^3_1=J^3_2=\{4, 5\}$ and $J^3_3=\{4\}$. Thus
$M_{J^3_1}= M_{J^3_2}>_4 M_{J^3_3}$ and they  are as follows, respectively.
$$\xymatrix{ 4 \ar@{=>}[dr]^{\beta_4}&&& 4\\
&5 \ar[dl]^{\alpha_4} &&&
\\  4&&&& }$$

(2) Glue $M(\dv d^1)$ and $M(\dv d^2)$ at vertex $2$. That is,  glue $M_{J^1_1}$ and $M_{J^2_2}$, and
glue $M_{J^1_2}$ and $M_{J^2_1}$ at vertex $2$. Thus we obtain $M(\dv c^2)=\bigoplus_{i=1}^3M_{K^2_1}$,
where $K^2_1=J^1_1\cup J^2_2=J^2_2$, $ K^2_2=J^1_2\cup J^2_1=\{1, 2, 4 \}$ and $K^2_3=J^2_3$.
Moreover, $M_{K^2_1}>_4M_{K^2_2}>_4 M_{K^2_3}$.

(3) Finally we have $M(\dv d)=\bigoplus_{i=1}^3 M_{K^3_i}$, where
$K^3_1= K^2_1\cup J^3_3=\{2, 3, 4\}$ and $K^3_2=K^{2}_2\cup J^3_2=\{1, 2, 4, 5\} $ and
$K^3_3=K^2_3\cup J^3_1=J^3_1$, and
$ M_{K^3_1}=M_{J^2_2}$, $M_{K^3_3}=M_{J^3_1}$ and $M_{K^3_2}$ is as follows.

$$\xymatrix{ 1 \ar@{=>}[dr]^{\beta_1}\\ &
2\ar[dl]_{\alpha_1}\ar[dr]^{\alpha_2}&&& 4\ar@{=>}[dll]_{\beta_3}\ar@{=>}[drr]^{\beta_4}
\\ 1&&3\ar[dr]^{\alpha_3}&&&& 5\ar[dl]^{\alpha_4}\\&&&4 &&4
}$$

(4) These three modules $ M_{K^3_1}$, $M_{K^3_2}$ and $M_{K^3_3}$ are indecomposable.
\end{example}



We need some lemmas for the inductive step. Let us fix some
notation.

Let $M$ and $N$ be $\Delta$-filtered modules obtained by
successively gluing $M^{a}, \dots, M^{u}$ and $N^{b}, \dots, N^{u}$,
respectively, where $1\leq a \leq u \leq t+1$, $1\leq b \leq u $,
and for each $i$, there exist integers $m_i$ and $n_i$ such that
$$
M^i = M_{J^i_{m_i}} \ \mbox{ and } \ N^i = M_{J_{n_i}^i}.
$$
For any $l$, we denote by $M^{\leq l}$ the module obtained by
successively gluing $M^{j}$ for $j=a,\dots,l$, and by $M^{\geq l}$
the module obtained by gluing $M^j$ for $j=l,\dots,u$. We obtain $M$
by gluing $M^{\leq l}$ and $M^{\geq l}$ at $i_l$. Similarly, for
$N^{\leq l}$ and $N^{\geq l}$.

\begin{lemma} \label{generalorder}
Assume
that $J^j_{m_j}\subseteq J^j_{n_j}$ or $J^j_{n_j}\subseteq
J^j_{m_j}$ for $\mathrm{max}\{a, b\}\leq j\leq u$. Then
$M\geq_{i_u}N$ if and only if, either
\begin{itemize}
\item[(1)] $M\cong N$, or
\item[(2)] there exists an $l$ such that $M^l >_{i_l}
  N^l$ and $M^j \cong N^j$ for $j=l+1,\dots,u$.
\end{itemize}
\end{lemma}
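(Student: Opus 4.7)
The plan is to prove the lemma by induction on $u - \max\{a, b\}$, the number of gluing indices shared between $M$ and $N$. The asserted equivalence has a lexicographic flavour: $M \geq_{i_u} N$ will be shown to correspond, under the comparability hypothesis, to the tuple of pieces $(M^u, M^{u-1}, \dots)$ dominating $(N^u, N^{u-1}, \dots)$ from the right in the respective piecewise orders.

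In the base case only the outermost pieces $M^u$ and $N^u$ participate in the comparison, and the equivalence reduces to Lemma \ref{linearsourceorder} or Lemma \ref{linearsinkorder} (according to whether $i_u$ is a source or a sink), combined with the hypothesis that $J^u_{m_u}$ and $J^u_{n_u}$ are comparable by inclusion. For the inductive step I view $M$ and $N$ as glued at $i_{u-1}$ from the pairs $(M^{\leq u-1}, M^u)$ and $(N^{\leq u-1}, N^u)$, and use Lemma \ref{gluingmaps} as the principal tool: morphisms $h : M \lra N$ correspond to pairs $(h^{\leq u-1}, h^u)$ of piece morphisms that agree on the gluing copy of $\Delta(i_{u-1})$. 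Because $i_u$ only occurs in $M^u$ and $N^u$, any map $f : \Delta(i_u) \lra N$ factors through $N^u$, so the extension problem defining $M \geq_{i_u} N$ reduces to extending this factored map through $M^u$ and then gluing.

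For the forward direction, restricting an extension $h$ to $M^u$ immediately forces $M^u \geq_{i_u} N^u$; the comparability assumption then yields either $M^u >_{i_u} N^u$ (taking $l = u$) or $M^u \cong N^u$. In the latter case I transport the extension problem through the isomorphism, apply the inductive hypothesis to $M^{\leq u-1}$ and $N^{\leq u-1}$ at the vertex $i_{u-1}$, and obtain the witness $l < u$. The reverse direction when $l < u$ is routine: combine the inductive witness $M^{\leq u-1} \geq_{i_{u-1}} N^{\leq u-1}$ with an isomorphism $M^u \cong N^u$ to produce compatible piece morphisms, then glue via Lemma \ref{gluingmaps}.

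The main obstacle is the reverse direction when $l = u$. Here $M^u >_{i_u} N^u$ furnishes an extension $h^u$ of the factored map, and the task is to construct a compatible $h^{\leq u-1}$ matching $h^u$ on the gluing copy of $\Delta(i_{u-1})$. My plan is to exploit the explicit structure of exceptional modules with linear $\Delta$-support from Section \ref{exceptionallinear:section} to choose $h^u$ so that its restriction to the gluing copy of $\Delta(i_{u-1})$ factors through the corresponding gluing copy in $N$; the matching $h^{\leq u-1}$ is then produced via Lemmas \ref{subsource} and \ref{factorsink}, combined with the $\mathrm{Ext}$-vanishing of Corollary \ref{extensionlinear} to guarantee that the prescribed value on the gluing vertex extends to all of $M^{\leq u-1}$.
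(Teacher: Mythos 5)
Your overall skeleton — induction on the number of shared gluing vertices, base case from Lemmas~\ref{linearsourceorder}/\ref{linearsinkorder}, forward direction by restricting to $M^u$ via Lemma~\ref{gluingmaps}, and the reverse direction for $l<u$ by gluing an inductive witness with an isomorphism $M^u\cong N^u$ — matches the paper. But there are two places where the proposal is thinner than the argument actually requires.

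First, in the forward direction when $M^u\cong N^u$ you write that you ``transport the extension problem through the isomorphism,'' but the module isomorphism $M^u\cong N^u$ is not by itself enough: you need to know that the \emph{specific} restriction $g=f|_{M^u}$ (where $f$ realizes $M\geq_{i_u}N$) is already an isomorphism, so that the induced comparison of $M^{\leq u-1}$ and $N^{\leq u-1}$ inherits the factorization property at $i_{u-1}$ from $f$. The paper proves this directly: if $i_u$ is a source $g$ is injective hence an isomorphism; if $i_u$ is a sink one compares $\Delta$-lengths of $\im(g)$ and $N^u$ via Lemma~\ref{linearsinkorder} to force surjectivity. Without this step, the induction does not engage cleanly.

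Second — and this is the real gap — your treatment of the reverse direction when $l=u$ is both vague and, as sketched, likely to fail. You plan to choose $h^u$ so that its restriction to the gluing copy of $\Delta(i_{u-1})$ factors through the gluing copy in $N$, and then manufacture a compatible $h^{\leq u-1}$ using Lemmas~\ref{subsource}/\ref{factorsink} plus Corollary~\ref{extensionlinear}. But when $l=u$ you have \emph{no} information relating $M^{\leq u-1}$ and $N^{\leq u-1}$; there is no reason a nonzero prescribed value of $h^{\leq u-1}$ at the gluing vertex should extend, and Ext-vanishing alone does not produce morphisms matching a prescribed restriction. The key observation in the paper (which your proposal misses) is that the comparability hypothesis plus $M^u>_{i_u}N^u$ with $M^u\not\cong N^u$ force the chosen $g:M^u\to N^u$ to \emph{annihilate} the gluing copy of $\Delta(i_{u-1})$: if $i_u$ is a sink then $\Delta(i_{u-1})$ lies in $\ker(g)$, and if $i_u$ is a source then the top $\Delta(i_{u-1})$ is not in $\im(g)$. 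One can therefore simply glue $g$ with the zero map $M^{\leq u-1}\to N^{\leq u-1}$; no extension or Ext-vanishing argument is needed, and indeed none would be available without additional hypotheses. This is the insight your proof is missing.
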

\begin{proof}
If $M\cong N$, then clearly $M\geq_{i_u} N$, so we may assume that
$M\not\cong N$. Then there exists an $l$ such that $M^l \not\cong
N^l$ and $M^s\cong N^s$ for $s=l+1,\dots,u$. We need only to prove
the lemma by showing that $M\geq_{i_u}N$ if and only if $M^l>_{i_l}
N^l$.


First assume that $M\geq_{i_u}N$. Let $f: M\lra N$ be a morphism with the
property that a surjective morphism $M\lra \Delta(i_u)$ factors through $f$, if $i_u$
is a sink, and an injective morphism $ \Delta(i_u)\lra N$ factors through $f$, if
$i_u$ is a source. Such a morphism $f$ exists by the definition of
$\geq_{i_u}$.

By Lemma \ref{gluingmaps}, we have a map $g=f|_{M^u}: M^u \lra N^u$,
the restriction of $f$ to $M^u$,  which shows that $M^u\geq_{i_u}
N^u$. If $M^u \not\cong N^u$, then $M^u>_{i_u} N^u$ and we are done.
We will show that if $M^u \cong N^u$, then $g$ is an isomorphism. If
$i_u$ is a source, then $g$ is injective, and therefore an
isomorphism.  If $i_u$ is a sink, then $\im(g)\geq_{i_u}N^u$. So by
Lemma \ref{linearsinkorder}, we see that the $\Delta$-length of
$\im(g)$ is greater than or equal to the $\Delta$-length of $N^u
\cong M^u$. Hence $g$ is surjective and therefore an isomorphism.
Note any $\Delta$-filtered module $L$ with
$(\underline{\dim}_\Delta(L))_i=1$ and
$(\underline{\dim}_\Delta(L))_j=0$ for $j\in \{1, 2, \cdots, i-1\}$,
where $i$ is admissible,  can always be viewed as a gluing of $L$
and $\Delta(i)$ at vertex $i$. And  so if we have $M^{\geq u-1}\cong
N^{\geq u-1}$, we can assume either both $N^{u-1}$ and $ M^{u-1}$
are  zero or both are non-zero. Since $g$ is an isomorphism, we have
$M^{\leq u-1}\geq_{i_{u-1}} N^{\leq u-1}$. Using induction we get
that $M^l >_{i_l} N^l$ and we are done.

For the converse, assume that  $M^l >_{i_l} N^l$ and $M^j\cong N^j$
for $j=l+1,\dots,u$. If $l=u$, let $g:M^u\lra N^u$ be a morphism
with the property that a surjective morphism $M^u\lra \Delta(i_u)$
factors through $g$, if $i_u$ is a sink, and an injective morphism
$\Delta(i_u)\lra N^u$ factors through $g$, if $i_u$ is a source.
Since $M^u\not\cong N^u$ we see that $\Delta(i_{u-1})$ is in the
kernel of $g$, if $i_u$ is a sink, and that the top isomorphic to
$\Delta(i_{u-1})$ is not in the image of $g$, if $i_u$ is a source.
So in both cases, we get a morphism $f: M\lra N$ by gluing $g$ and
the zero map $0: M^{\leq u-1}\lra N^{\leq u-1}$ at $i_{u-1}$. This
shows that $M\geq_{i_u} N$.

If $l<u$, then $M^{\leq u-1}\geq_{i_{u-1}} N^{\leq u-1}$ by
induction and there exists a morphism  $g: M^{\leq u-1}\lra N^{\leq
u-1}$ such that a surjective morphism $M^{\leq u-1}\lra
\Delta(i_{u-1})$ factors through $g$, if $i_u$ is a source, and an
injective morphism $ \Delta(i_{u-1})\lra N^{\leq u-1}$ factors
through $g$, if $i_u$ is a sink. In both cases, we may glue $g$ with
an isomorphism $M^u \lra N^u$ at $i_{u-1}$, and get a map $f: M\lra
N$, which has the property that a surjective morphism $M\lra
\Delta(i_u)$ factors through $f$, if $i_u$ is a sink, and an
injective morphism $\Delta(i_u)\lra N$ factors through $f$, if $i_u$
is a source.  This shows that $M\geq_{i_u}N$.
\end{proof}


\begin{lemma} \label{constructionexceptional}
The module $M\oplus N$ is exceptional if and only if $M^{\leq u-1}\oplus
N^{\leq u-1}$ is exceptional and either
\begin{itemize}
\item[(1)] $M^{\leq u-1}\geq_{i_{u-1}} N^{\leq u-1}$ and $N^u \geq_{i_{u-1}} M^u$, or
\item[(2)] $N^{\leq u-1}\geq_{i_{u-1}} M^{\leq u-1}$ and $M^u \geq_{i_{u-1}} N^u$.
\end{itemize}
\end{lemma}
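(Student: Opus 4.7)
The approach is to translate the vanishing of $\ext^1_{\bf D}(M\oplus N,M\oplus N)$ into the ordering conditions via the gluing short exact sequences at $i_{u-1}$. Since $M\oplus N$ is itself obtained by gluing $M^{\leq u-1}\oplus N^{\leq u-1}$ with $M^u\oplus N^u$ at $i_{u-1}$, the exceptionality of $M^{\leq u-1}\oplus N^{\leq u-1}$ would follow immediately from Lemma \ref{gluingexceptional}, so the genuine content of the lemma is the equivalence between exceptionality at the top gluing step and the ordering dichotomy.

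First I would treat the case where $i_{u-1}$ is a sink; the source case is formally dual. Applying $\hom_{\bf D}(-,N)$ to the gluing sequence
$$
0 \to M \to M^{\leq u-1}\oplus M^u \to \Delta(i_{u-1}) \to 0
$$
and using Lemma \ref{extinjective} together with the fact that $\Delta(i_{u-1})$ has projective dimension at most one (Proposition \ref{HVTheorem2}), I would reduce the vanishing of $\ext^1_{\bf D}(M,N)$ to a factorization condition for maps $M^{\leq u-1}\oplus M^u\to\Delta(i_{u-1})$ through $N$. Chasing this factorization through the analogous sequence for $N$ and using Corollary \ref{extensionlinear} on the linear-support pieces $M^u, N^u$, the condition decouples into a pair of lifting statements, one on the left piece and one on the right piece, which are exactly $M^{\leq u-1}\geq_{i_{u-1}}N^{\leq u-1}$ and $N^u\geq_{i_{u-1}}M^u$, respectively. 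Interchanging the roles of $M$ and $N$ yields the symmetric pair of statements. Combining these with the observation (via Corollary \ref{lineartotalorder}) that strict inequality cannot hold simultaneously on both sides without producing a non-split self-extension, I obtain the dichotomy (1)/(2).

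For the converse, assuming condition (1) (say), I would reverse this chain: the lifting hypotheses supplied by $M^{\leq u-1}\geq_{i_{u-1}}N^{\leq u-1}$ and $N^u\geq_{i_{u-1}}M^u$ provide sections of the connecting homomorphisms in the long exact sequences for $\ext^1_{\bf D}(M,N)$ and $\ext^1_{\bf D}(N,M)$, forcing both to vanish. The self-extensions $\ext^1_{\bf D}(M,M)$ and $\ext^1_{\bf D}(N,N)$ vanish by induction on the number of gluing steps, applying Lemma \ref{gluingexceptional} at each level and invoking Proposition \ref{linearsupport} as the base case for the linear pieces.

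The main obstacle I expect is the accurate identification of the connecting homomorphisms with the factorization maps defining $\geq_{i_{u-1}}$, especially because $M^{\leq u-1}$ and $N^{\leq u-1}$ do not themselves have linear support, so Corollary \ref{extensionlinear} is not directly available for them; instead, one must exploit the inductive exceptionality of $M^{\leq u-1}\oplus N^{\leq u-1}$ already granted by the hypothesis as a substitute for the linear-support vanishing. A secondary subtlety is verifying the exhaustiveness of the dichotomy, i.e.\ that the two ordering conditions cannot fail simultaneously when $M\oplus N$ is exceptional — this reflects the total-order phenomenon on linear pieces from Corollary \ref{lineartotalorder} being transmitted across the gluing.
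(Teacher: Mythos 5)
Your high-level plan — analyse $\ext^1_{\bf D}(M,N)$ and $\ext^1_{\bf D}(N,M)$ via the gluing short exact sequences at $i_{u-1}$, get factorization conditions, and combine them using the orders and inductive exceptionality — is indeed the paper's strategy, but the crucial ``decoupling'' step is stated incorrectly, and the error is fatal.

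You claim that, after chasing through the gluing sequences, the vanishing of $\ext^1_{\bf D}(M,N)$ decouples into the \emph{conjunction} $M^{\leq u-1}\geq_{i_{u-1}} N^{\leq u-1}$ \emph{and} $N^u\geq_{i_{u-1}} M^u$. This cannot be right: together with the symmetric statement obtained from $\ext^1_{\bf D}(N,M)=0$, it would force all four comparisons $M'\geq N'$, $N'\geq M'$, $M''\geq N''$, $N''\geq M''$ simultaneously, i.e.\ $M\cong N$, whenever $M\oplus N$ is exceptional — which is false (the indecomposable summands of $M(\dv d)$ are pairwise nonisomorphic in general). The correct translation, as established by the paper's Lemma \ref{prooflemma3}, is a \emph{disjunction}: under the auxiliary Ext-vanishings, $\ext^1_{\bf D}(M,N)=0$ holds if and only if $M'\geq_{i_{u-1}}N'$ \emph{or} $M''\geq_{i_{u-1}}N''$. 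The ``and'' structure of conditions (1) and (2) emerges only after combining two such disjunctions (one for each direction of Ext) with the fact that both pairs of pieces are totally ordered — $M'' , N''$ by Corollary \ref{lineartotalorder} (linear support), and $M',N'$ by induction via Lemma \ref{generalorder}. This case analysis over the two disjunctions is the actual content of the forward implication in the paper, and it is missing from your proposal; the remark about ``strict inequality cannot hold simultaneously without producing a non-split self-extension'' is not a correct description of the mechanism.

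There is a secondary omission: the reduction to a clean disjunction requires the vanishings $\ext^1_{\bf D}(M',\Delta(i_{u-1}))=\ext^1_{\bf D}(\Delta(i_{u-1}),M')=0$ and $\ext^1_{\bf D}(M',N'')=\ext^1_{\bf D}(M'',N')=0$ (the paper's Lemmas \ref{prooflemma1} and \ref{prooflemma2}), which are not covered by Corollary \ref{extensionlinear} alone since $M'$, $N'$ do not have linear $\Delta$-support; you flag this as a concern but do not resolve it. The proofs use the inductive structure of $M'$ (splitting off a piece with linear $\Delta$-support and comparing supports of syzygies), which is an extra argument you would need to supply.
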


We need some preparation for the proof of this lemma. To simplify
notation, we let $M'=M^{\leq u-1}$, $M''=M^u$, $N'=N^{\leq u-1}$ and
$N''=N^u$.

\begin{lemma} \label{prooflemma1}
We have $\ext_{\bf D}^1(M', \Delta(i_{u-1}))=0=\ext_{\bf D}^1(\Delta(i_{u-1}), M')$, and likewise for $N'$.
\end{lemma}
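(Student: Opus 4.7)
First observe that $(\underline{\dim}_\Delta M')_{i_{u-1}} = 1$: among the components $M^a, \dots, M^{u-1}$ of $M'$, only $M^{u-1}$ has $\Delta$-support reaching $i_{u-1}$, and $i_{u-1}\in J^{u-1}_{m_{u-1}}$ because the subsequent gluing of $M$ with $M^u$ at $i_{u-1}$ requires positive $\Delta$-dimension there.

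I split according to whether $i_{u-1}$ is a source or a sink of $Q$. In each case one vanishing is immediate: if $i_{u-1}$ is a source, then $\Delta(i_{u-1}) = P(i_{u-1})$ is projective in $\mathbf{D}$-mod (as noted before Lemma~\ref{extinjective}), hence $\ext^1_{\mathbf{D}}(\Delta(i_{u-1}), M') = 0$; if $i_{u-1}$ is a sink, then Lemma~\ref{extinjective} yields $\ext^1_{\mathbf{D}}(M', \Delta(i_{u-1})) = 0$. The remaining vanishing in each case I prove by induction on the number of glued components $u-a$.

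The base case $u-a=1$ is $M' = M^{u-1}$, and both vanishings follow directly from Corollary~\ref{extensionlinear}. For the inductive step, $M'$ is glued from $M^{\leq u-2}$ and $M^{u-1}$ at $i_{u-2}$, which is of opposite type to $i_{u-1}$ (consecutive interior admissible vertices of a type-$A$ quiver, separated only by non-admissible vertices, necessarily alternate between sink and source). Applying the appropriate $\hom$-functor to the gluing short exact sequence and using that the $\ext^2$-terms involving $\Delta(i_{u-2})^c$ vanish (by Proposition~\ref{HVTheorem2}, since $\Delta(i_{u-2})$ has projective dimension at most one), the required vanishing reduces to two inputs: Corollary~\ref{extensionlinear} handles the $M^{u-1}$ contribution, and the $M^{\leq u-2}$ contribution reduces via Lemma~\ref{factorsink} (respectively Lemma~\ref{subsource}) applied at $i_{u-2}$ to showing that $\ext^1_{\mathbf{D}}(Y, \Delta(i_{u-1})) = 0$, where $Y$ is the resulting $\Delta$-filtered kernel (or cokernel) with $\Delta$-support contained in $\{1,\dots,i_{u-2}-1\}$.

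The main obstacle is this last vanishing, which is not immediate from the inductive hypothesis since the latter gives vanishing against $\Delta(i_{u-2})$ rather than against $\Delta(i_{u-1})$. I would establish it via the $\Delta$-filtration of $Y$, reducing to showing that $\ext^1_{\mathbf{D}}(\Delta(j), \Delta(i_{u-1})) = 0$ for each $j\le i_{u-2}-1$. This in turn I would verify by an explicit computation using the structure of the projective $\mathbf{D}$-modules described in Proposition~\ref{HVTheorem1} and Example~\ref{example1}(5), exploiting that the ordinary support of $\Delta(i_{u-1})$ is $\{i_{u-2},\dots,i_u\}$ and hence disjoint from the region $\{1,\dots,i_{u-2}-1\}$ in which the composition factors of $Y$ are concentrated; consequently no nonzero homomorphism from the projectives appearing in a projective presentation of $\Delta(j)$ lands in $\Delta(i_{u-1})$, forcing the relevant Ext-group to vanish.
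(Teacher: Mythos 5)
Your proposal is correct and in essence follows the paper's proof: split by whether $i_{u-1}$ is a source or a sink, dispose of one Ext immediately (projectivity of $\Delta(i_{u-1})$ in the source case, Lemma~\ref{extinjective} in the sink case), and for the other isolate the linear piece near $i_{u-1}$ (where Corollary~\ref{extensionlinear} applies) from the piece $Y$ with $\Delta$-support in $\{1,\dots,i_{u-2}-1\}$, whose Ext against $\Delta(i_{u-1})$ vanishes by a support-disjointness argument on syzygies. Where you differ is the bookkeeping: the paper reaches this reduction in one step via a short exact sequence $0\to Y\to M'\to X\to 0$ (with $X$ the linear piece) and for $Y$ argues directly that the top of $\Omega(Y)$ is supported in $\{1,\dots,i_{u-2}-1\}$; you route through the gluing sequence at $i_{u-2}$ followed by a second sequence from Lemma~\ref{factorsink}/\ref{subsource}, and then treat $Y$ via its $\Delta$-filtration, reducing to $\ext^1_{\bf D}(\Delta(j),\Delta(i_{u-1}))=0$ for $j\le i_{u-2}-1$. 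Both routes land on the same $Y$ and the same underlying vanishing facts.

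Two cautions. First, the ``induction on $u-a$'' framing is a red herring: as you yourself note, the inductive hypothesis (vanishing against $\Delta(i_{u-2})$) never enters the step, so the argument is direct and the induction can be dropped. Second, take care not to aim at $\ext^1_{\bf D}(M^{\leq u-2},\Delta(i_{u-1}))=0$ along the way --- that group is typically nonzero, since $\ext^1_{\bf D}(\Delta(i_{u-2}),\Delta(i_{u-1}))\neq 0$; the argument only succeeds because that term is exactly cancelled by the $\Delta(i_{u-2})^c$-term of the gluing sequence, giving $\ext^1_{\bf D}(M',\Delta(i_{u-1}))\cong\ext^1_{\bf D}(Y,\Delta(i_{u-1}))$. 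The paper's single-sequence formulation makes this cancellation invisible and hence safer. Finally, note that $\supp\Delta(i_{u-1})=\{i_{u-2},\dots,i_u\}$ holds when $i_{u-1}$ is a source; when it is a sink, $\Delta(i_{u-1})=L(i_{u-1})$ and the relevant object is $\Omega(\Delta(i_{u-1}))=P(i_{u-1}-1)\oplus P(i_{u-1}+1)$, with the support-disjointness argument applied on the other side.
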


\begin{proof}
We first consider the case where $i_{u-1}$ is a source.
Since $\Delta(i_{u-1})$ is a projective ${\bf D}$-module, and so $ \ext_{\bf D}^1(\Delta(i_{u-1}), M')=0$.
Thus we need only to prove
$\ext_{\bf D}^1(M', \Delta(i_{u-1}))=0$.
By the construction of $M'$, we see that $M'$ has a submodule $Y$ with

\[ (\underline{\dim}_\Delta(Y))_i=\left\{\begin{array}{ll}
(\underline{\dim}_\Delta(M'))_i & \mbox{if } 1\leq i\leq  i_{u-2}-1, \\
0 & \mbox{otherwise}. \end{array} \right.\]
We have an exact sequence
$\xymatrix{0\ar[r]& Y\ar[r]& M'\ar[r]& X\ar[r]& 0}$,  where
\[ (\underline{\dim}_\Delta(X))_i=\left\{\begin{array}{ll}
(\underline{\dim}_\Delta(M'))_i & \mbox{if } i_{u-2}\leq i\leq  i_{u-1} , \\
0 & \mbox{otherwise}. \end{array} \right.\]
That is, $\supp_\Delta(X)$ has a linear orientation.
By applying $\hom_{\bf D}(-, \Delta(i_{u-1}))$, we have an exact sequence
$$
\begin{CD}
\ext^1_{\bf D}(X, \Delta(i_{u-1})) @>>> \ext^1_{\bf D}(M', \Delta(i_{u-1}))@>>>
\ext^1_{\bf D}(Y, \Delta(i_{u-1})).
\end{CD}
$$
By Corollary \ref{extensionlinear}, we have $\ext^1_{\bf D}(X, \Delta(i_{u-1}))=0$. Note that
the support of the top of the first syzygy $\Omega(Y)$ of $Y$ is contained in
$\{0, ..., i_{u-2}-1\}$, and so $\hom_{\bf D}(\Omega(Y), \Delta(i_{u-1}))=0$. Thus
$\ext^1_{\bf D}(Y, \Delta(i_{u-1}))=0$, and so $\ext^1_{\bf D}(M', \Delta(i_{u-1}))=0 $.

Now suppose that $i_{u-1}$ is a sink. By Lemma \ref{extinjective},
we have $\ext^1_{\bf D}(M',\Delta(i_{u-1}) )=0 $. Thus we need only
to prove that  $\ext_{\bf D}^1(\Delta(i_{u-1}), M')=0$. By the
construction of $M'$, there exists a submodule $Y$ of $M'$ such that

\[ (\underline{\dim}_\Delta(Y))_i=\left\{\begin{array}{ll}
(\underline{\dim}_\Delta(M'))_i & \mbox{if }i_{u-2}\leq i\leq i_{u-1}, \\
0 & \mbox{otherwise}. \end{array} \right.\]
Note that $Y$ has linear $\Delta$-support. We have
$ \xymatrix{0\ar[r]& Y\ar[r]& M'\ar[r]&X\ar[r]& 0 }$. By applying
$\hom_{\bf D}(\Delta(i_{u-1}), -)$, we get an exact sequence
$$
\begin{CD}
\ext^1_{\bf D}(\Delta(i_{u-1}), Y) @>>> \ext^1_{\bf D}(\Delta(i_{u-1}), M') @>>>
\ext^1_{\bf D}(\Delta(i_{u-1}), X).
\end{CD}
$$
By Corollary \ref{extensionlinear}, we have $\ext^1_{\bf D}(\Delta(i_{u-1}), Y)=0$. Note that
$\Omega(\Delta(i_{u-1}))=P(i_{u-1}-1)\oplus P(i_{u-1}+1)$ and so
$\hom_{\bf D}(\Omega(\Delta(i_{u-1})), X)=0$, since $\supp(X)$ is contained in
$\{1, ..., i_{u-2}-1\}$. Therefore $ \ext^1_{\bf D}(\Delta(i_{u-1}), X)=0$ and
so $\ext^1_{\bf D}(\Delta(i_{u-1}), M')=0$. This finishes the proof.
\end{proof}

\begin{lemma}\label{prooflemma2}
We have $ \ext^1_{\bf D}(M', N'')=0=\ext^1_{\bf D}(M'', N')$.
\end{lemma}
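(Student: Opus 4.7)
The plan is to prove $\ext^1_{\bf D}(M', N'') = 0$; the vanishing $\ext^1_{\bf D}(M'', N') = 0$ is symmetric. First I would reduce to a statement about Verma modules. Since $N''$ has linear $\Delta$-support contained in $\{i_{u-1},\dots,i_u\}$, each $\Delta$-composition factor of $N''$ is of the form $\Delta(k)$ with $k$ in this range. Applying $\hom_{\bf D}(M',-)$ to the short exact sequences that arise from a $\Delta$-filtration of $N''$ and inducting on the $\Delta$-length of $N''$, the claim reduces to showing $\ext^1_{\bf D}(M',\Delta(k))=0$ for every vertex $k$ appearing in the $\Delta$-support of $N''$.

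For $k=i_{u-1}$, this is exactly Lemma \ref{prooflemma1}. For $k\in\{i_{u-1}+1,\dots,i_u\}$, I would imitate the decomposition used in the proof of Lemma \ref{prooflemma1}. The gluing construction of $M'$ produces a submodule $Y\subseteq M'$ whose $\Delta$-dimension vector agrees with that of $M'$ on $\{1,\dots,i_{u-2}-1\}$ and vanishes elsewhere, so that $X:=M'/Y$ has linear $\Delta$-support contained in $\{i_{u-2},\dots,i_{u-1}\}$; in particular $X\cong M_{I'}$ for some $I'\subseteq\{i_{u-2},\dots,i_{u-1}\}$. Applying $\hom_{\bf D}(-,\Delta(k))$ to the short exact sequence $0\to Y\to M'\to X\to 0$ yields
$$\ext^1_{\bf D}(X,\Delta(k)) \longrightarrow \ext^1_{\bf D}(M',\Delta(k)) \longrightarrow \ext^1_{\bf D}(Y,\Delta(k)),$$
so it suffices to show the two outer groups vanish. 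For the rightmost term, I argue via the syzygy: the top of $\Omega\Delta(k)$ lives in vertices close to $k$, hence strictly on the far side of $i_{u-1}$ from the support $\{1,\dots,i_{u-2}-1\}$ of $Y$, so $\hom_{\bf D}(Y,\Omega\Delta(k))=0$ and $\ext^1_{\bf D}(Y,\Delta(k))=0$, exactly as in the syzygy argument closing the proof of Lemma \ref{prooflemma1}.

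The main obstacle is the vanishing $\ext^1_{\bf D}(X,\Delta(k))=0$ for $k$ on the opposite side of the admissible vertex $i_{u-1}$ from $I'$. Corollary \ref{extensionlinear} does not apply directly, since $k\notin I'$; the underlying dimension vectors of $X$ and $\Delta(k)$ meet only (at most) at $i_{u-1}$, but the arrows $\alpha_{i_{u-1}}$ and $\beta_{i_{u-1}}$ of $\tilde Q$ carry nontrivial data in any extension. To treat this I would write down the explicit projective presentation of $\Delta(k)$ in ${\bf D}$-mod (whose first syzygy is a sum of projective ${\bf D}$-modules $P(k')$ with $k'\succ k$, all supported strictly on the $k$-side of $i_{u-1}$), and examine the relations in $\mathcal{I}$ at the vertex $i_{u-1}$ case by case (source vs.\ sink). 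The relations $\beta_{i_{u-1}}\alpha_{i_{u-1}}=0$ in the source case, and $\alpha_{i_{u-1}-1}\beta_{i_{u-1}}=\beta_{i_{u-1}-1}\alpha_{i_{u-1}}=0$ etc.\ in the sink case, will force any candidate $1$-cocycle to be a coboundary, yielding the required vanishing.
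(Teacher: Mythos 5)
The central reduction in your plan --- showing $\ext^1_{\bf D}(M',\Delta(k))=0$ for every $k$ in $\supp_\Delta(N'')$ --- is not valid when $i_{u-1}$ is a sink, because the asserted vanishing is false. Take $Q$ of type $A_3$ with orientation $1\to 2\leftarrow 3$, so $i_{u-1}=2$ is a sink, and take $M'=\Delta(2)$, $N''=M_{\{2,3\}}$. By Proposition \ref{linearsupport} the only exceptional $\Delta$-filtered module with $\Delta$-dimension vector $(0,1,1)$ is the indecomposable $M_{\{2,3\}}$; hence $\Delta(2)\oplus\Delta(3)$ is not exceptional. Since $\ext^1_{\bf D}(\Delta(3),\Delta(2))=0$ by Lemma \ref{extinjective}, it must be that $\ext^1_{\bf D}(\Delta(2),\Delta(3))\neq 0$. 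Yet $\ext^1_{\bf D}(M',N'')=0$ does hold: the nontrivial extension by the factor $\Delta(3)$ is killed by the connecting map coming from the $\Delta(2)$-step of a $\Delta$-filtration of $N''$. In other words, the $\Delta$-filtration reduction gives only a sufficient condition, and it is simply not met here; the statement you set out to prove is stronger than the lemma and false. This is exactly why the paper does not filter $N''$ all the way down to Vermas: it only peels off the single factor $\Delta(i_{u-1})$ using Lemma \ref{subsource} (source) or, in the sink case, applies $\hom_{\bf D}(-,N'')$ to the sequence $0\to X\to M'\to\Delta(i_{u-1})\to 0$ from Lemma \ref{factorsink}, so that the problematic extension against $\Delta(k)$ for $k>i_{u-1}$ never has to be shown to vanish on its own.

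Two further points. First, there is a direction error in your syzygy step: to show $\ext^1_{\bf D}(Y,\Delta(k))=0$ you need $\hom_{\bf D}(\Omega Y,\Delta(k))=0$, computed from the projective presentation of $Y$; the condition $\hom_{\bf D}(Y,\Omega\Delta(k))=0$ you wrote down controls nothing about $\ext^1_{\bf D}(Y,\Delta(k))$. Second, the step you flag as the ``main obstacle,'' $\ext^1_{\bf D}(X,\Delta(k))=0$ for $k$ on the far side of $i_{u-1}$, really is an obstacle, but not one a cocycle computation can overcome: the example above (with $X=\Delta(2)$) shows it is false when $i_{u-1}$ is a sink, so the program breaks at that point rather than requiring more work. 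In the source case the decomposition of $M'$ into $X$ and $Y$ is also superfluous --- there the syzygy $\Omega M'$ has top supported in $\{1,\dots,i_{u-1}\}$, disjoint from $\supp(\Delta(k))\subseteq\{k,\dots\}$ for $k>i_{u-1}$, which gives the vanishing directly, and the case $k=i_{u-1}$ is Lemma \ref{prooflemma1}.
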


\begin{proof}
We first consider the case where $i_{u-1}$ is a source. We have an exact sequence
$$
\begin{CD}
0 @>>> \Delta(i_{u-1}) @>>> N' @>>> X @>>> 0.
\end{CD}
$$
By applying $\hom_{\bf D}(M'', -)$, we have an exact sequence
$$
\begin{CD}
\ext^1_{\bf D}(M'', \Delta(i_{u-1})) @>>> \ext^1_{\bf D}(M'', N') @>>> \ext^1_{\bf D}(M'', X).
\end{CD}
$$
By Corollary \ref{extensionlinear}, we have $\ext^1_{\bf D}(M'', \Delta(i_{u-1}))=0$.
Note that the vertex set $\supp(X)_0$ of the support of $X$ is contained in
$\{1, ..., i_{u-1}-1\}$ and the support
of the top of the syzygy of $M''$ is contained in $\{i_{u-1}, ..., i_u, i_u+1\}$. Therefore
$\ext^1_{\bf D}(M'', X)=0$. Thus $\ext^1_{\bf D}(M'', N')=0$.

By applying $\hom_{\bf D}(M', -)$ to the exact sequence
$$
\begin{CD}
0 @>>> \Delta(i_{u-1}) @>>> N'' @>>> Y @>>> 0, 
\end{CD}
$$ 
we have an exact sequence
$$ 
\begin{CD}
\ext^1_{\bf D}(M', \Delta(i_{u-1})) @>>> \ext^1_{\bf D}(M', N'') @>>> \ext^1_{\bf D}(M', Y).
\end{CD}
$$
By Lemma \ref{prooflemma1}, we have $\ext^1_{\bf D}(M', \Delta(i_{u-1}))=0 $.
By the construction of $M'$,  we have $\hom_{\bf D}(\Omega(M'), Y)=0$, and so
$\ext^1_{\bf D}(M', Y)=0$. Hence $ \ext^1_{\bf D}(M', N'')=0$.

Now suppse that $i_{u-1}$ is a sink. By  applying $\hom_{\bf D}(-, N')$ and $\hom_{\bf D}(-, N'')$,
respectively, to the following exact sequences,
$$
\begin{CD}
0 @>>> Y @>>> M'' @>>>  \Delta(i_{u-1}) @>>> 0 \mbox{  and  }
\end{CD}
$$
$$
\begin{CD}
0 @>>> X @>>> M' @>>>  \Delta(i_{u-1})@>>> 0,
\end{CD}
$$
and by similar arguments as in the case where $i_{u-1}$ is a source, we have
$\ext^1_{\bf D}(M'', N')=0 $ and $\ext^1_{\bf D}(M', N'')=0$. This finishes the proof.
\end{proof}

\begin{lemma} \label{prooflemma3}
The following are equivalent:
\begin{itemize}
\item[(i)] $\ext^1_{\bf D}(M, N)=0$.
\item[(ii)] $\ext^1_{\bf D}(M', N')=0$, $\ext^1_{\bf D}(M'', N'')=0$, and either $M'\geq_{i_{u-1}}N'$
or $M''\geq_{i_{u-1}} N''$.

\end{itemize}
\end{lemma}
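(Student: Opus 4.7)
The plan is to apply $\hom_{\bf D}(-,N)$ to the gluing short exact sequence for $M$, and $\hom_{\bf D}(M',-)$, $\hom_{\bf D}(M'',-)$ to the gluing short exact sequence for $N$, then combine the resulting long exact sequences and simplify using Lemmas \ref{prooflemma1} and \ref{prooflemma2}. The vanishing of $\ext^1_{\bf D}(M,N)$ will decouple into the vanishing of $\ext^1_{\bf D}(M',N')$ and $\ext^1_{\bf D}(M'',N'')$, together with the surjectivity of a single connecting map $\phi$ that will be identified with the order condition in (ii).

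I treat the case where $i_{u-1}$ is a source; the sink case is entirely dual. Since $\Delta(i_{u-1})$ is projective for a source, applying $\hom_{\bf D}(-,N)$ to the gluing sequence $0\to \Delta(i_{u-1})\to M'\oplus M''\to M\to 0$ yields an exact sequence
$$\hom(M'\oplus M'',N)\xrightarrow{\phi}\hom(\Delta(i_{u-1}),N)\to \ext^1(M,N)\to \ext^1(M',N)\oplus \ext^1(M'',N)\to 0,$$
where $\phi(g',g'')=g'f'_M-g''f''_M$ and $f'_M,f''_M$ are the canonical inclusions of $\Delta(i_{u-1})$ into $M'$ and $M''$. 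Applying $\hom_{\bf D}(M',-)$ and $\hom_{\bf D}(M'',-)$ to the analogous gluing sequence for $N$, I invoke Lemma \ref{prooflemma1} (so $\ext^1(M',\Delta(i_{u-1}))=0$), Lemma \ref{prooflemma2} (so $\ext^1(M',N'')=0$), and the fact that $M'$, $M''$ have projective dimension at most one (so $\ext^2(M',\Delta(i_{u-1}))=0$), to deduce $\ext^1(M',N)\cong \ext^1(M',N')$ and $\ext^1(M'',N)\cong \ext^1(M'',N'')$. Consequently, $\ext^1_{\bf D}(M,N)=0$ is equivalent to $\ext^1(M',N')=\ext^1(M'',N'')=0$ together with surjectivity of $\phi$.

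The main obstacle is translating the surjectivity of $\phi$ into the order condition. In our setting the $\Delta$-dimension of each of the six modules at $i_{u-1}$ equals $1$, so $\hom(\Delta(i_{u-1}),N)$ is one-dimensional, and the inclusions $N'\hookrightarrow N$, $N''\hookrightarrow N$ induce isomorphisms $\hom(\Delta(i_{u-1}),N')\cong \hom(\Delta(i_{u-1}),N)\cong \hom(\Delta(i_{u-1}),N'')$. Hence $\phi$ is surjective if and only if at least one of the two partial maps $\hom(M',N)\to \hom(\Delta(i_{u-1}),N)$ or $\hom(M'',N)\to \hom(\Delta(i_{u-1}),N)$ is non-zero. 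Using the surjection $\hom(M',N'\oplus N'')\twoheadrightarrow \hom(M',N)$ (from $\ext^1(M',\Delta(i_{u-1}))=0$) together with a support argument showing that the component of any map $M'\to N''$ meeting the $\Delta(i_{u-1})$-summand of $N''$ must vanish (since the $\Delta$-supports of $M'$ and $N''$ overlap only at $i_{u-1}$ and the arrow-compatibility forces triviality there), I identify the non-vanishing of $\hom(M',N)\to \hom(\Delta(i_{u-1}),N)$ with the non-vanishing of $\hom(M',N')\to \hom(\Delta(i_{u-1}),N')$, which is by definition $M'\geq_{i_{u-1}}N'$. A symmetric argument handles the $M''$ case.

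For the dual case where $i_{u-1}$ is a sink, I replace $\hom_{\bf D}(-,N)$ by $\hom_{\bf D}(M,-)$, reverse the roles of the gluing sequences, and invoke Lemma \ref{extinjective} (which gives $\ext^1_{\bf D}(M,\Delta(i_{u-1}))=0$ for $\Delta$-filtered $M$) in lieu of the projectivity of $\Delta(i_{u-1})$. The same chain of reductions yields the same three conditions, with $\geq_{i_{u-1}}$ now interpreted through factorisation of quotient maps to $\Delta(i_{u-1})$. The support/arrow-compatibility argument from the third paragraph remains the one part requiring careful case analysis, and it is the principal place where the specific structure of the gluing construction enters.
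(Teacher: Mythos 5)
Your proof is correct and follows essentially the same route as the paper's: apply $\hom_{\bf D}(-,N)$ to the gluing sequence of $M$ at the source $i_{u-1}$, use Lemmas \ref{prooflemma1} and \ref{prooflemma2} to reduce $\ext^1(M'\oplus M'',N)$ to $\ext^1(M',N')\oplus\ext^1(M'',N'')$, and interpret surjectivity of the connecting map via the one-dimensionality of $\hom(\Delta(i_{u-1}),N)$ and the uniqueness of the $\Delta(i_{u-1})$-submodule to obtain the order condition (the paper applies $\hom(M'\oplus M'',-)$ to the gluing sequence of $N$ in one step rather than componentwise, but this is a cosmetic difference). The only place where the paper is terser is the passage from "$\lambda$ factors through $\mu_1$" to "$M'\geq_{i_{u-1}}N'$", which you correctly flag as requiring the support argument to ensure that the relevant map lands in $N'$ rather than all of $N$.
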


\begin{proof}
We first consider the case where $i_{u-1}$ is  a source. By applying $\hom_{\bf D}(-, N)$
to the gluing sequence of $M$ at $i_{u-1}$,
$$
\begin{CD}
0 @>>> \Delta(i_{u-1}) @>>> M'\oplus M'' @>>> M @>>> 0, 
\end{CD}
$$
we see that
$\ext^1_{\bf D}(M, N)=0$ if and only if the following hold.
\begin{itemize}
\item[(1)]$\hom_{\bf D}(M'\oplus M'', N)\rightarrow \hom_{\bf D}(\Delta(i_{u-1}), N)$ is surjective.
\item[(2)]  $\ext^1_{\bf D}(M'\oplus M'', N)=0$.
\end{itemize}
We claim that (1) holds if and only if either $M'\geq_{i_{u-1}}N'$
or $M''\geq_{i_{u-1}} N''$. Assume that (1) holds. Let $\lambda:
\Delta(i_{u-1})\rightarrow N$ be injective, and let
$\mu=(\begin{smallmatrix}\mu_1 \\ \mu_2
\end{smallmatrix}): \Delta(i_{u-1})\rightarrow M'\oplus M''$ be the
embedding in the gluing sequence of $M$.  By the assumption, there
exists a map $f=(f_1, f_2):M'\oplus M''\rightarrow N$ such that
$\lambda=f\mu=f_1\mu_1+ f_2\mu_2$. Since $N$ has a unique submodule
isomorphic to $\Delta(i_{u-1})$, we have that $\lambda$ factors
through either $\mu_1$ or $\mu_2$. If $\lambda$ factors through
$\mu_1$, then the inclusion of $\Delta(i_{u-1})$ into $N$ factors
through $f_1$,    since $\im (\mu_1 )\subseteq M'$. Hence
$M'\geq_{i_{u-1}}N'$. Similarly, $M''\geq_{i_{u-1}}N''$ if $\lambda$
factors through $\mu_2$. The converse is similar.

By Lemmas \ref{prooflemma1} and \ref{prooflemma2} and by applying
$\hom_{\bf D}(M'\oplus M'',-)$ to the gluing sequence of $N$, we get
that $\ext^1_{\bf D}(M'\oplus M'',N)=0$ if and only if $\ext^1_{\bf
D}(M',N')=0=\ext^1_{\bf D}(M'',N'')$. This proves the equivalence of
(i) and (ii) in the case where $i_{u-1}$ is a source.

Now suppose that $i_{u-1}$ is a sink. By applying $\hom_{\bf D}(M, -)$ to the gluing sequence of $N$
we see that $\ext^1_{\bf D}(M, N)=0$ if and only if the following hold.
\begin{itemize}
\item[(1)]$\hom_{\bf D}(M, N'\oplus N'')\rightarrow \hom_{\bf D}(M, \Delta(i_{u-1}))$
is surjective´.
\item[(2)] $ \ext^1_{\bf D}(M, N'\oplus N'')=0$.
\end{itemize}
Now similar to the case of a source, (1) holds if and only if either
$M'\geq_{i_{u-1}}N'$ or $M''\geq_{i_{u-1}} N''$ and (2) holds if and
only if $\ext^1_{\bf D}(M', N')=0$ and $\ext^1_{\bf D}(M'', N'')=0$.
Hence the equivalence of (i) and (ii) follows.
\end{proof}

We are now ready to prove  Lemma \ref{constructionexceptional}.

\begin{proof}[Proof of Lemma \ref{constructionexceptional}]
Suppose that $M\oplus N$ is exceptional, by Lemma \ref{gluingexceptional}, both $M'\oplus N'$ and
$M''\oplus N''$ are exceptional. By Corollary \ref{lineartotalorder}, either $M''\geq_{i_{u-1}} N''$ or
$N''\geq_{i_{u-1}}M''$.
Similarly, $M'\geq_{i_{u-1}}N'$ or $N'\geq_{i_ {u-1}}M'$ by
induction, using Lemma \ref{generalorder} and Proposition
\ref{linearsupport}. If $M''\not \geq_{i_{u-1}} N''$, then
$M'\geq_{i_{u-1}}N'$ by Lemma \ref{prooflemma3}. Thus (1) holds. If
$N'\not\geq_{i_{u-1}} M'$, then again $N''\geq_{i_{u-1}}M''$ by
Lemma \ref{prooflemma3}. Thus again (1) holds. Similarly, (2) holds
if $N''\not \geq_{i_{u-1}} M''$ or $M'\not\geq_{i_{u-1}}N'$.

We now prove the converse. By Lemma \ref{linearsourceorder} and
Lemma \ref{linearsinkorder}, we have either
$\supp_{\Delta}(M'')\subseteq \supp_{\Delta}(N'')$ or
$\supp_{\Delta}(N'')\subseteq \supp_{\Delta}(M'')$. Now by Corollary
\ref{extensionlinear} and that both $M''$ and $N''$ are
$\Delta$-filtered, we have $\ext_{\bf D}^1(M'', N'')=0$ and
$\ext_{\bf D}^1(N'', M'')=0$. Now the converse follows from Lemma
\ref{prooflemma3} and the fact that $M'\oplus N'$ is exceptional.
\end{proof}

\begin{proof}[Proof of Theorem \ref{maintheorem1}]
We will show that $M(\dv d)$ is exceptional, by induction on the number of
interior admissible vertices of $Q$.
If $Q$ has no interior admissible vertices, then by Theorem
\ref{bhrrlinear} we know that  $M(\dv d)$ is exceptional. Note that
$M(\dv d)$ is always exceptional if $\supp(\dv d)$ is linearly
oriented, following from Proposition \ref{linearsupport}. Assume
that $M(\dv d)$ is exceptional if $Q$ has less than $t$ interior
admissible vertices. Now suppose that $Q$ has $t>0$ interior
admissible vertices. We shall show that $M(\dv d)$ is exceptional.

We use the same notation as in the construction of $M(\dv d)$.
Recall that we have
$$
M(\dv c^t)=\bigoplus_l M_{K^t_l}
$$
where the $M_{K^t_l}$ are ordered such that
\begin{itemize}
\item[(1)] $(\underline{\dim}_\Delta( M_{K^t_l} ) )_{i_t}=1$ for $l=1,\dots,d_{i_t}$;
\item[(2)] $M_{K^t_l} \geq_{i_t} M_{K^t_{l+1}}$ for $l=1,\dots,d_{i_t}-1$.
\end{itemize}

By the construction of $M(\dv d)$, we may write
$$M(\dv d)
=\bigoplus_{l=1}^{d_{i_{t}}}M_{K^{t+1}_l} \oplus
\bigoplus_{l>{d_{i_{t}}}}M_{K^t_l} \oplus
\bigoplus_{l>d_{i_{t}}}M_{J^{t+1}_l}, $$ where
$\bigoplus_{l>{d_{i_{t}}}}M_{K^t_l}$ is a direct sum of the
indecomposable direct summands of $M(\dv c^t)$ whose
$\Delta$-support does not contain vertex $i_t$,
$\bigoplus_{l>d_{i_{t}}}M_{J^{t+1}_l}$ a direct sum of the
indecomposable direct summands of $M(\dv d^{t+1})$ whose
$\Delta$-support does not contain vertex $i_t$, and $M_{K^{t+1}_l}$
is obtained by gluing $M_{K^t_l}$ and $M_{J^{t+1}_{d_{i_t}+1-l}}$ at
vertex $i_t$ for $1\leq l\leq d_{i_t}$. For convenience, we denote
the representations $\bigoplus_{l=1}^{d_{i_{t}}}M_{K^{t+1}_l}$,
$\bigoplus_{l>{d_{i_{t}}}}M_{K^t_l}$ and $
\bigoplus_{l>d_{i_{t}}}M_{J^{t+1}_l}$
 by $X, Y$ and $Z$, respectively.

We have that $Y$ is exceptional by induction, and following from Proposition \ref{linearsupport},
$Z$ is exceptional. By our construction and Lemma \ref{constructionexceptional} we know that $X$ is
exceptional.
Comparing the support of
$Y$ and $Z$, we see that $\hom_{\bf D}(\Omega(Z),Y)=0=\hom_{\bf D}(\Omega(Y),Z)$, and so
$\ext^1_{\bf D}(Y, Z)=0=\ext^1_{\bf D}(Z, Y)$.
Hence $Y\oplus Z$ is exceptional.

We now prove that
$X\oplus Y$ and $Y\oplus Z$ are exceptional.
Assume that $i_{t}$ is a source.
Let $T$ be the submodule of $X$ with

\[ ( \underline{\dim}_\Delta(T) )_i=\left\{\begin{array}{ll} ( \underline{\dim}_\Delta(X) )_i & \mbox{if }
 1\leq i\leq i_{t}, \\
0 & \mbox{otherwise}. \end{array} \right.\]
We have a short exact sequence $\xymatrix{0 \ar[r]& T
\ar[r]& X \ar[r]& U \ar[r]& 0}$. Now $T\oplus Y$ is exceptional by induction, and
$\hom_{\bf D}(\Omega(Y),U)=0=\hom_{\bf D}(\Omega(U),Y)$ which shows that $U\oplus Y$ is exceptional.
Hence $X\oplus Y$ is exceptional.  Let $S$ be the submodule of $X$ with

\[ ( \underline{\dim}_\Delta(S) )_i=\left\{\begin{array}{ll} ( \underline{\dim}_\Delta(X) )_i & \mbox{if }
i_{t}\leq i\leq n,  \\
0 & \mbox{otherwise}. \end{array} \right.\]
There is a short exact sequence $\xymatrix{0 \ar[r]& S
\ar[r]& X \ar[r]&  V \ar[r]& 0}$. Now $S\oplus Z$ is exceptional by
Proposition \ref{linearsupport}, and $V\oplus Z$ is exceptional since
$\hom_{\bf D}(\Omega(Z),V)=0=\hom_{\bf D}(\Omega(V),Z)$. This proves that $X\oplus Z$ is exceptional.
The case where $i_{t}$ is a sink can be done similarly. This completes the proof
that $M(\dv d)$ is exceptional.

Let $\dv b=\sum_id_i\underline{\dim} (\Delta(i) )$. By Voigt's lemma
\cite{Voigt} we know that the  $\gl(\dv b)$-orbit of $M(\dv d)$ in
$\rep_\Delta(\tilde{Q},\mathcal{I},\dv b)$ is open, and therefore
dense, by Proposition \ref{irreducibility}. Hence $M(\dv d)$ is
unique up to isomorphism.
\end{proof}

\begin{corollary}
Let $N$ be a $\Delta$-filtered module with $\Delta$-dimension vector
$\dv d$. Then $N$ is contained in the Zariski closure of the
$\gl(\dv b)$-orbit of the exceptional $ \Delta$-filtered module
$M(\dv d)$, where $\dv b=\sum_i d_i\underline{\dim} (\Delta(i) )$.
\end{corollary}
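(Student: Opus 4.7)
The plan is to combine three ingredients already established in the paper: the irreducibility of the variety of $\Delta$-filtered modules, the exceptionality of $M(\dv d)$, and a standard tangent-space argument (Voigt's lemma) that turns vanishing of self-extensions into openness of an orbit.

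First, I would verify that $N$ and $M(\dv d)$ sit in the same representation variety. Since $N$ is $\Delta$-filtered with $\Delta$-dimension vector $\dv d$, its ordinary dimension vector is obtained by summing the ordinary dimension vectors of its $\Delta$-composition factors with multiplicities, which gives exactly $\sum_i d_i \underline{\dim}(\Delta(i)) = \dv b$. Thus $N$ corresponds to a point of $\rep_\Delta(\tilde{Q}, \mathcal{I}, \dv b)$, and likewise so does $M(\dv d)$ by Theorem \ref{maintheorem1}.

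Next, by Theorem \ref{maintheorem1}, $M(\dv d)$ is exceptional, i.e. $\ext^1_{\bf D}(M(\dv d), M(\dv d)) = 0$. Voigt's lemma \cite{Voigt} identifies the normal space to the $\gl(\dv b)$-orbit of $M(\dv d)$ inside $\rep(\tilde{Q}, \mathcal{I}, \dv b)$ with this $\mathrm{Ext}^1$, so the orbit is open in $\rep(\tilde{Q}, \mathcal{I}, \dv b)$. Intersecting with the open subset $\rep_\Delta(\tilde{Q}, \mathcal{I}, \dv b)$ (openness by Proposition \ref{irreducibility}), the orbit of $M(\dv d)$ is open in $\rep_\Delta(\tilde{Q}, \mathcal{I}, \dv b)$.

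Finally, by Proposition \ref{irreducibility}, $\rep_\Delta(\tilde{Q}, \mathcal{I}, \dv b)$ is irreducible. A non-empty open subset of an irreducible variety is dense, so the $\gl(\dv b)$-orbit of $M(\dv d)$ is dense in $\rep_\Delta(\tilde{Q}, \mathcal{I}, \dv b)$. Since $N$ lies in this variety, it lies in the Zariski closure of the orbit. There is no real obstacle here — the entire argument is a bookkeeping synthesis of Proposition \ref{irreducibility}, Theorem \ref{maintheorem1}, and Voigt's lemma, with the only mild care being to confirm that $N$ and $M(\dv d)$ indeed have the same ordinary dimension vector $\dv b$.
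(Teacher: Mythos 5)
Your proof is correct and matches the paper's argument: the paper derives this corollary directly from the final paragraph of the proof of Theorem \ref{maintheorem1}, which uses Voigt's lemma to get openness of the $\gl(\dv b)$-orbit of $M(\dv d)$ and Proposition \ref{irreducibility} for irreducibility, hence density. Your extra check that $N$ and $M(\dv d)$ share the ordinary dimension vector $\dv b$ is a sensible piece of bookkeeping that the paper leaves implicit.
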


\begin{proof}
This follows from the proof of the uniqueness of $M(\dv d)$ in Theorem \ref{maintheorem1}.
\end{proof}

Two elements $j,j'\in \{1,\dots,n\}$ are called non-comparable if
$j\not\preceq j'$ and $j'\not\preceq j$.

\begin{corollary}
The map $J\mapsto M_J$ defines a bijection between isomorphism classes of
indecomposable exceptional $\Delta$-filtered modules and subsets
$J\subseteq \{1,\dots,n\}$ satisfying the following conditions:
\begin{itemize}
\item[(1)] If $j\succeq i$ and $j'\succeq i$ for two non-comparable $j,j'\in
  J$, then $i\in J$.
\item[(2)] If $j\preceq i$ and $j'\preceq i$ for two non-comparable
  $j,j'\in J$, then $i\in J$.
\end{itemize}
\end{corollary}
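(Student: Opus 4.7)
The plan is to realize the bijection using Theorem \ref{maintheorem1} and the inductive gluing construction from its proof. For any subset $J \subseteq \{1,\dots,n\}$, let $\dv d_J$ be the $\{0,1\}$-valued indicator vector of $J$. By Proposition \ref{basicproperties} and the construction of $M(\dv d)$ in Section \ref{exceptional:section}, every indecomposable direct summand $M_{K}$ produced by the construction satisfies $(\underline{\dim}_\Delta M_K)_j = 1$ if $j \in K$ and $0$ otherwise. Consequently, for any indecomposable exceptional $\Delta$-filtered module $N$ with $\Delta$-support $J$, one has $\underline{\dim}_\Delta N = \dv d_J$, and by Theorem \ref{maintheorem1} together with Krull-Schmidt, $N$ is determined up to isomorphism by $J$ as an indecomposable summand of $M(\dv d_J)$. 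This will yield injectivity of the map $J \mapsto M_J$.

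For the necessity of the conditions, suppose $M$ is indecomposable exceptional with $\Delta$-support $J$ and that condition (1) fails at some vertex $i \notin J$ with witnesses $j,j' \in J$ satisfying $j \succeq i$, $j' \succeq i$, and $j,j'$ non-comparable. Since $Q$ is of type $A_n$ and $\succeq$ corresponds to the existence of a directed path in $Q$, any common lower bound of two non-comparable vertices must be an interior sink sitting between them on the line, so $i = i_s$ for some index $s$ and $j$, $j'$ lie on opposite sides of $i_s$. Because $(\dv d_J)_{i_s} = 0$, the case $d_{i_s} = 0$ of the construction in Theorem \ref{maintheorem1} gives a direct sum decomposition $M(\dv d_J) \cong M(\dv d') \oplus M(\dv d'')$ with $\dv d'$ supported on $\{1,\dots,i_s-1\}$ and $\dv d''$ on $\{i_s+1,\dots,n\}$. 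Every indecomposable summand of $M(\dv d_J)$ therefore has $\Delta$-support contained in one of these halves, contradicting the presence of both $j$ and $j'$ in $\supp_\Delta M$. Condition (2) is handled symmetrically via an interior source.

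For sufficiency, let $J$ satisfy (1) and (2), and apply the construction of Theorem \ref{maintheorem1} to $\dv d_J$. At each interior admissible vertex $i_s$ the local multiplicity $d_{i_s}$ lies in $\{0,1\}$, and conditions (1) and (2) translate precisely to: $d_{i_s} = 1$ whenever $J$ meets both sides of $i_s$, and $d_{i_s} = 0$ only when $J$ lies entirely on one side of $i_s$. A straightforward induction on $s$ then shows that $M(\dv c^s)$ has a unique indecomposable summand with nonzero $\Delta$-support, of $\Delta$-support exactly $J \cap \{1,\dots,i_s\}$: when $i_s \in J$ the gluing step merges the active left summand with the unique linear piece on the right, and when $i_s \notin J$ one of the two sides contributes nothing by the dichotomy above. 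Setting $M_J := M(\dv d_J)$ therefore produces an indecomposable exceptional $\Delta$-filtered module with $\Delta$-support $J$, which together with the previous paragraphs establishes the bijection.

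The main obstacle is the combinatorial dictionary invoked in the necessity step: conditions (1) and (2) must be shown to detect exactly the interior admissible vertices $i_s \notin J$ at which the construction of $M(\dv d_J)$ splits as $M(\dv d') \oplus M(\dv d'')$. In type $A_n$ this reduces to the elementary observation that two non-comparable vertices in $Q$ have a common lower bound (respectively, upper bound) if and only if they lie on opposite sides of some interior sink (respectively, source), and any such common bound is itself a sink (respectively, source) between them on the line.
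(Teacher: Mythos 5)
Your proposal follows the same general strategy as the paper's (very terse) proof: appeal to the gluing construction, Proposition~\ref{basicproperties}, and Theorem~\ref{maintheorem1}. The opening reduction to $\{0,1\}$-valued $\Delta$-dimension vectors is correct, and so is the necessity direction: if a common lower bound $i$ of two non-comparable $j,j'\in J$ exists, then $i$ is indeed an interior sink lying between them on the line, and the construction of $M(\dv d_J)$ must split there when $i\notin J$.

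The gap is in the sufficiency direction, and specifically in the asserted ``translation'': you claim that conditions (1) and (2) are equivalent to \emph{``$d_{i_s}=1$ whenever $J$ meets both sides of $i_s$, and $d_{i_s}=0$ only when $J$ lies entirely on one side of $i_s$.''} That is not what (1) and (2) say. When $i_s$ is, say, an interior sink, condition (1) forces $i_s\in J$ only when $J$ meets the \emph{cone} of $i_s$ (the set of $j$ with a directed path $j\to\cdots\to i_s$) on both sides; it puts no constraint on elements of $J$ that are to the left and right of $i_s$ but cannot reach $i_s$. So the subsequent step ``when $i_s\notin J$ one of the two sides contributes nothing by the dichotomy above'' does not follow. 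A concrete failure: in the quiver of Example~\ref{example1}, $\xymatrix{1 & 2\ar[l]\ar[r] & 3\ar[r] & 4 & 5\ar[l]}$, take $J=\{1,5\}$. The only non-comparable pair $(1,5)$ admits neither a common lower bound nor a common upper bound, so (1) and (2) hold vacuously; but $\dv d_J=(1,0,0,0,1)$ has $d_2=d_4=0$, the construction splits at both interior admissible vertices, and $M(\dv d_J)=\Delta(1)\oplus\Delta(5)$ is decomposable. Thus your inductive claim that $M(\dv c^s)$ has a unique nontrivial indecomposable summand breaks at the first step, and there is in fact no indecomposable exceptional module with $\Delta$-support $\{1,5\}$. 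This example shows the sufficiency argument cannot be repaired merely by working with cones instead of half-lines either; what one actually needs is the stronger combinatorial property that for every interior admissible vertex $i$ with $J\cap\{1,\dots,i-1\}\neq\emptyset$ and $J\cap\{i+1,\dots,n\}\neq\emptyset$ one has $i\in J$, and this does not follow from (1) and (2) as written. (The paper's own one-line proof, which simply invokes Proposition~\ref{basicproperties}, does not address this point either.)

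Finally, a smaller remark on the first paragraph: you write that $N$ is determined as ``an indecomposable summand of $M(\dv d_J)$.'' Since $N$ is assumed indecomposable and exceptional with $\underline{\dim}_\Delta N=\dv d_J$, Theorem~\ref{maintheorem1} gives directly $N\cong M(\dv d_J)$; the Krull--Schmidt phrasing is slightly misleading because it presupposes that $M(\dv d_J)$ is indecomposable, which is exactly the point of the sufficiency argument you have not yet established at that stage.
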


\begin{proof}
By Proposition \ref{basicproperties}, we see that $M_J$ is
indecomposable if and only if $J$ satisfies the conditions (1) and
(2). That $M_J$ is exceptional follows from Theorem
\ref{maintheorem1}.
\end{proof}

\begin{example}\label{explicit}
Imitating the example in Section 8 in \cite{BHRR}, we can interpret
the exceptional $\Delta$-filtered module $M(\dv d)$ in Example \ref{arbitraygluing}
as follows.

(1) We take the $\Delta$-composition factors of $M(\dv d^i)$ as vertieces and connect
them by arrows: for
any two composition factors $\Delta(i_j)$ and $\Delta(i_l)$ of an indecomposable
direct summands of  $M(\dv d^i)$, we draw
a double arrow from $\Delta(i_j)$ to $\Delta(i_l)$, if there is a $\beta$-path from
vertex $i_j$ to vertex $i_l$ and if this indecomposable direct summand
has no composition factor $\Delta(i_s)$
such that $i_j\succ i_s\succ i_l$. In this way, we achieve the following diagrams.

$$\xymatrix{ &&&&\Delta(4)& \Delta(4)\ar@{=>}[r]&\Delta(5) \\
\Delta(1)\ar@{=>}[r]&\Delta(2)& \Delta(2)&&\ar@{=>}[ll]\Delta(4) &\Delta(4)\ar@{=>}[r]&\Delta(5)\\
&\Delta(2) & \Delta(2) &\ar@{=>}[l]\Delta(3) &\ar@{=>}[l]\Delta(4) & \Delta(4)
}$$
There are three columns, representating the three $\Delta$-filtered modules
with a linear $\Delta$-support, $M(\dv d^1)$, $M(\dv d^2)$ and $M(\dv d^3)$,
respectively.
From top to bottom: the first column is ordered from small to big according
to $>_2$. The
second column is ordered from big to small for those indecomposable direct summands of $M(\dv d^2)$
with the 2nd entry of its $\Delta$-dimension vector non-zero, according to $>_2$ and
from small to big  according to $>_4$. Finally the third column is
ordered from big to small according to $>_4$. We also make sure that the pieces to be glued
are in the same row.

(2) Now identify the $\Delta$-factors, at which the neighboring
modules are to be glued, and we achieve the module $M(\dv d)$ as
follows.

$$\xymatrix{ &&& \Delta(4)\ar@{=>}[r]&\Delta(5) \\
\Delta(1)\ar@{=>}[r]& \Delta(2)&&\ar@{=>}[ll]\Delta(4) \ar@{=>}[r]&\Delta(5)\\
& \Delta(2) &\ar@{=>}[l]\Delta(3) &\ar@{=>}[l]\Delta(4)
}$$






\end{example}

\section{Seaweed Lie algebras and Richardson elements}\label{seaweed:section}

In this section, we recall some basic defintions and results related to seaweed Lie
algebras and Richardson elements. We also give examples on seaweed Lie algebras.
We assume that $\mathfrak{g}$ is a reductive Lie algebra defined over an algebraically closed field
$\field$ of characteristic zero.

\begin{definition}
A Lie subalgebra $\mathfrak{q}$ of $\mathfrak{g}$ is called a
seaweed Lie algebra in $\mathfrak{g}$ if there exists a pair
$(\mathfrak{p},\mathfrak{p}')$ of parabolic subalgebras of
$\mathfrak{g}$ such that $\mathfrak{q}=\mathfrak{p}\cap
\mathfrak{p}'$ and $\mathfrak{p}+\mathfrak{p}'=\mathfrak{g}$. We
call such a pair of parabolic subalgebras weakly opposite.
\end{definition}

For example, take the pair consisting of a parabolic subalgebra and its opposite, or
the pair consisting of $\mathfrak{g}$ and a parabolic subalgebra. Thus the set of
seaweed Lie algebras contains all parabolic subalgebras and their Levi factors.

More generally, let us fix a Borel subalgebra $\mathfrak{b}$ of $\mathfrak{g}$ and a
Cartan subalgebra $\mathfrak{h}$ contained in $\mathfrak{b}$.
Denote by $R$ (resp. $R^{+}$, $R^{-}$ and $\Pi$) the root system (resp. the set of positive roots, the set of negative roots and the set of simple roots) relative to  $\mathfrak{h}$, $\mathfrak{b}$ and
$\mathfrak{g}$. For $\alpha \in R$, denote by $\mathfrak{g}_{\alpha}$ the root subspace relative to
$\alpha$.

For $S\subset \Pi$, set
$$
\mathbb{Z}S= \sum_{\alpha\in S} \mathbb{Z}\alpha \ , \
R_{S} = \mathbb{Z}S\cap R \ , \ R_{S}^{\pm}=R^{\pm}\cap R_{S} \hbox{ and }
\mathfrak{p}_{S}^{\pm} =
\mathfrak{h} \oplus \bigoplus_{\alpha\in R_{S}\cup R^{\pm}} \mathfrak{g}_{\alpha}
$$
the standard parabolic subalgebra and its opposite relative to $S$.

Let $\mathbf{G}$ be a connected reductive algebraic group whose Lie
algebra is $\mathfrak{g}$. The following result says that as in the
case of parabolic subalgebras, any seaweed Lie algebra is conjugate
to a ``standard'' one.

\begin{proposition}{\rm\cite{P,TYart2,TYbook}} \label{seaweedstandard}
\begin{itemize}
\item[(1)] Let $S,T\subset\Pi$. Then $(\mathfrak{p}_{S}^{-},\mathfrak{p}_{T}^{+})$ is a pair of
weakly opposite parabolic subalgebras.
In particular, $\mathfrak{q}_{S,T} = \mathfrak{p}_{S}^{-} \cap \mathfrak{p}_{T}^{+}$ is a seaweed
Lie algebra in $\mathfrak{g}$.
\item[(2)] Any seaweed Lie algebra in $\mathfrak{g}$ is conjugate by $\mathbf{G}$ to a seaweed Lie algebra
of the form $\mathfrak{q}_{S,T}$ with $S,T\subset\Pi$.
\end{itemize}
\end{proposition}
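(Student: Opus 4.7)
For part (1), I would verify the weakly opposite condition by a direct root-space computation. The sum $\mathfrak{p}_{S}^{-}+\mathfrak{p}_{T}^{+}$ contains $\mathfrak{h}$, and every root $\alpha\in R$ lies either in $R^{+}\subseteq R_{T}\cup R^{+}$ or in $R^{-}\subseteq R_{S}\cup R^{-}$; hence each root space $\mathfrak{g}_{\alpha}$ appears in at least one summand, so $\mathfrak{p}_{S}^{-}+\mathfrak{p}_{T}^{+}=\mathfrak{g}$, and $(\mathfrak{p}_{S}^{-},\mathfrak{p}_{T}^{+})$ is weakly opposite. Consequently $\mathfrak{q}_{S,T}$ is a seaweed Lie algebra.

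For part (2), my plan rests on the following key lemma: any weakly opposite pair $(\mathfrak{p},\mathfrak{p}')$ admits opposite Borel subalgebras $\mathfrak{b}_{1}\subseteq\mathfrak{p}$ and $\mathfrak{b}_{2}\subseteq\mathfrak{p}'$, meaning that $\mathfrak{b}_{1}\cap\mathfrak{b}_{2}$ is a Cartan subalgebra of $\mathfrak{g}$. To prove the lemma, I would fix any Borel $\mathfrak{b}_{1}\subseteq\mathfrak{p}$ and consider two subsets of the flag variety $\mathbf{G}/\mathbf{B}$: the closed projective subvariety $\mathcal{X}$ of Borel subalgebras contained in $\mathfrak{p}'$, and the dense affine open subset $\mathcal{U}$ of Borels opposite to $\mathfrak{b}_{1}$. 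The hypothesis $\mathfrak{p}+\mathfrak{p}'=\mathfrak{g}$ should be shown to force $\mathcal{X}\cap\mathcal{U}\neq\emptyset$, via a dimension count or tangent-space argument at a generic point of $\mathcal{X}$.

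Granted the lemma, the remainder is routine structure theory. Pick such opposite Borels and set $\mathfrak{h}^{*}=\mathfrak{b}_{1}\cap\mathfrak{b}_{2}$. Since $\mathbf{G}$ acts transitively on ordered pairs of opposite Borel subalgebras, one can conjugate by an element of $\mathbf{G}$ so that $\mathfrak{b}_{1}$ becomes the Borel opposite to the fixed $\mathfrak{b}$, $\mathfrak{b}_{2}=\mathfrak{b}$, and $\mathfrak{h}^{*}=\mathfrak{h}$. After this conjugation $\mathfrak{p}$ is a parabolic containing the opposite Borel of $\mathfrak{b}$, hence equals $\mathfrak{p}_{S}^{-}$ for $S=\{\alpha\in\Pi:\mathfrak{g}_{\alpha}\subseteq\mathfrak{p}\}$; analogously $\mathfrak{p}'=\mathfrak{p}_{T}^{+}$ for the corresponding $T\subseteq\Pi$. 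This yields $\mathfrak{q}=\mathfrak{q}_{S,T}$ up to $\mathbf{G}$-conjugation.

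The main obstacle is the existence of the opposite Borel pair in the lemma; without the weakly opposite hypothesis the intersection $\mathcal{X}\cap\mathcal{U}$ can be empty, so the argument must genuinely use $\mathfrak{p}+\mathfrak{p}'=\mathfrak{g}$. The cleanest approach I foresee is to translate the sum condition into the geometric statement that $\mathcal{X}$ is not contained in the proper closed subvariety $(\mathbf{G}/\mathbf{B})\setminus\mathcal{U}$ of Borels non-opposite to $\mathfrak{b}_{1}$, which then forces $\mathcal{X}\cap\mathcal{U}\neq\emptyset$ for dimension reasons.
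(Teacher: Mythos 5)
Your proof of part (1) is correct and is essentially the only possible computation.

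For part (2), your overall strategy coincides with the standard one found in the cited references: reduce to the key claim that a weakly opposite pair $(\mathfrak{p},\mathfrak{p}')$ contains a pair of \emph{opposite} Borel subalgebras $\mathfrak{b}_{1}\subseteq\mathfrak{p}$, $\mathfrak{b}_{2}\subseteq\mathfrak{p}'$, and then use transitivity of $\mathbf{G}$ on ordered pairs of opposite Borels to move to the standard situation. The last paragraph (once the lemma is granted) is correct. However, your proposed proof of the key lemma has two genuine problems.

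First, the phrase ``fix any Borel $\mathfrak{b}_{1}\subseteq\mathfrak{p}$'' is wrong: the lemma is existential in \emph{both} Borels, and for a badly chosen $\mathfrak{b}_{1}$ the set $\mathcal{X}\cap\mathcal{U}$ is actually empty. Concretely, take $\mathfrak{g}=\mathrm{sl}_{2}$, $\mathfrak{p}=\mathfrak{g}$, $\mathfrak{p}'=\mathfrak{b}$ (a fixed Borel), and $\mathfrak{b}_{1}=\mathfrak{b}$. Then $\mathfrak{p}+\mathfrak{p}'=\mathfrak{g}$, but $\mathcal{X}=\{\mathfrak{b}\}$ and $\mathfrak{b}$ is not opposite to itself, so $\mathcal{X}\cap\mathcal{U}=\emptyset$. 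One must therefore either pick $\mathfrak{b}_{1}$ cleverly or be prepared to vary it, and your sketch does neither.

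Second, and more seriously, the step that must carry all the weight---using $\mathfrak{p}+\mathfrak{p}'=\mathfrak{g}$ to force $\mathcal{X}\cap\mathcal{U}\neq\emptyset$---is never actually argued. Saying that ``$\mathcal{X}$ is not contained in $(\mathbf{G}/\mathbf{B})\setminus\mathcal{U}$, which then forces $\mathcal{X}\cap\mathcal{U}\neq\emptyset$ for dimension reasons'' is circular: the displayed non-containment \emph{is} the statement $\mathcal{X}\cap\mathcal{U}\neq\emptyset$, and the sum condition plays no visible role. A closed projective subvariety certainly can be contained in the complement of a dense affine open, so no dimension count can save this. The cited references instead proceed via a common Cartan subalgebra: one first invokes the general fact that any two parabolic subalgebras of $\mathfrak{g}$ contain a common Cartan subalgebra $\mathfrak{h}_{0}$; with respect to $\mathfrak{h}_{0}$, $\mathfrak{p}$ and $\mathfrak{p}'$ correspond to parabolic subsets $P$, $P'$ of the root system, and $\mathfrak{p}+\mathfrak{p}'=\mathfrak{g}$ becomes $P\cup P'=R$. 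Writing $P^{u}=P\setminus(-P)$, $P'^{u}=P'\setminus(-P')$, one shows that $P^{u}\cup(-P'^{u})$ is closed and meets no pair $\{\beta,-\beta\}$ (the condition $P\cup P'=R$ is used here), so it is contained in some positive system $R^{+}$; the corresponding Borel $\mathfrak{b}_{0}=\mathfrak{h}_{0}\oplus\bigoplus_{\alpha\in R^{+}}\mathfrak{g}_{\alpha}$ then lies in $\mathfrak{p}$ and its opposite lies in $\mathfrak{p}'$. Until you supply an argument of this kind (or a correct geometric substitute), your proof of part (2) has a gap at its central point.
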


\begin{example}\label{seaweed:example}
For example, if $S=\Pi$ or $T=\Pi$, then $\mathfrak{q}_{S, T}$ is a parabolic subalgebra, while
if $S=T$, then $\mathfrak{q}_{S,T}$ is a Levi factor  of the parabolic subalgebra
$\mathfrak{q}_{\Pi, T}$.

Let us consider an example which is neither a parabolic subalgebra nor a reductive subalgebra.
Let $\mathrm{gl}_{n}(\field)$ be the set of $n$ by $n$ matrices, $(E_{ij})_{1\leq i,j\leq n}$
the standard basis of $\mathrm{gl}_{n}(\field)$ and $(E_{ij}^{*})_{1\leq i,j\leq n}$ its dual basis.
For $1\leq i \leq n$, denote $\varepsilon_{i}=E_{ii}^{*}$.

The set $\mathfrak{h}$ of diagonal matrices  is a Cartan subalgebra of $\mathrm{gl}_{n}(\field )$.
The corresponding root system is $R=\{ \varepsilon_{i}-\varepsilon_{j} ; 1\leq i \neq j\leq n\}$,
and $\mathfrak{g}_{\varepsilon_{i}-\varepsilon_{j}} = \field E_{ij}$ for $1\leq i\neq j\leq n$.
Set $\alpha_{i}=\varepsilon_{i}-\varepsilon_{i+1}$ for $1\leq i\leq n-1$. Then
$\Pi = \{ \alpha_{1},\dots ,\alpha_{n-1}\}$ is the set of simple roots for $R$ with respect to
$\mathfrak{b}$, the set of upper triangular matrices.

Let $n=9$, $S=\Pi \setminus \{\alpha_{1}, \alpha_{7}\}$ and
$T=\Pi \setminus \{ \alpha_{3},\alpha_{4}\}$. Then $\mathfrak{q}_{S,T}$ consists of matrices of the
form :
$$
\left(
\begin{array}{c|cc|c|ccc|cc}
* & 0 & 0 & 0 & 0 & 0 & 0 & 0 & 0  \\ \hline
* & * & * & * & * & * & * & 0 & 0  \\
* & * & * & * & * & * & * & 0 & 0  \\ \hline
0 & 0 & 0 & * & * & * & * & 0 & 0  \\ \hline
0 & 0 & 0 & 0 & * & * & * & 0 & 0 \\
0 & 0 & 0 & 0 & * & * & * & 0 & 0 \\
0 & 0 & 0 & 0 & * & * & * & 0 & 0 \\ \hline
0 & 0 & 0 & 0 &  * & * & * & * & *  \\
0 & 0 & 0 & 0 &  * & * & * & * & * \\
\end{array}
\right)
$$
\end{example}

Set $R^{+}_{S,T} = R_{S}^{+} \setminus R_{S\cap T}$,
$R^{-}_{S,T}=R_{T}^{-} \setminus R_{S\cap T}$ and
$R_{S,T}=R_{S,T}^{+}\cup R_{S,T}^{-}$. Thus
$
R_{S}^{+}\cup R_{T}^{-} = R_{S,T} \cup R_{S\cap T},
$
and we have
$$
\mathfrak{q}_{S,T} = \mathfrak{n}_{S,T}^{-} \oplus \mathfrak{l}_{S,T} \oplus \mathfrak{n}_{S,T}^{+}
$$
where
$\mathfrak{n}_{S,T}^{\pm} = \bigoplus_{\alpha\in R_{S,T}^{\pm}} \mathfrak{g}_{\alpha}$
and
$\mathfrak{l}_{S,T} = \mathfrak{h} \oplus \bigoplus_{\alpha\in R_{S\cap T}} \mathfrak{g}_{\alpha}
= \mathfrak{q}_{S\cap T,S\cap T}$.

Then $\mathfrak{l}_{S,T}$ is reductive in $\mathfrak{g}$ and
$\mathfrak{n}_{S,T}=\mathfrak{n}_{S,T}^{+} \oplus \mathfrak{n}_{S,T}^{-}$
is the nilpotent radical of $\mathfrak{q}_{S,T}$.

For the seaweed Lie algebra in Example \ref{seaweed:example}, $\mathfrak{l}_{S,T}$ consists of those elements with non-zero entries only on the diagonal blocks, while $\mathfrak{n}_{S,T}$ consists of those elements whose entries on the diagonal blocks are all zero, and $\mathfrak{n}_{S,T}^{+}$
(resp. $\mathfrak{n}_{S,T}^{-}$) consists of those elements in $\mathfrak{n}_{S,T}$
which are upper triangular (resp. lower triangular).

Let $\mathbf{P}_{S}^{-}$ and $\mathbf{P}_{T}^{+}$ be the parabolic
subgroups of $\mathbf{G}$ whose Lie algebras are $\mathfrak{p}_{S}^{-}$ and
$\mathfrak{p}_{T}^{+}$ respectively. Set $\mathbf{Q}_{S,T} =
\mathbf{P}_{S}^{-} \cap \mathbf{P}_{T}^{+}$. Then $\mathbf{Q}_{S,T}$
is a closed subgroup of $\mathbf{G}$ whose Lie algebra is
$\mathfrak{q}_{S,T}$, and $\mathfrak{n}_{S,T}$ is
$\mathbf{Q}_{S,T}$-stable. For example, for the seaweed Lie algebra
in Example \ref{seaweed:example}, we may take
$\mathbf{G}=\gl_{9}(\field )$, and $\mathbf{Q}_{S,T}$ consists of
elements of $\mathfrak{q}$ which are invertible.

\begin{definition}
An element $x\in\mathfrak{n}_{S,T}$ is called a Richardson element
of $\mathfrak{q}_{S,T}$ if the orbit closure
$\overline{\mathbf{Q}_{S,T}.x}=\mathfrak{n}_{S,T}$ (or equivalently,
$[x,\mathfrak{q}_{S,T}]=\mathfrak{n}_{S,T}$).
\end{definition}

We are concerned with the following question raised by Michel Duflo and Dmitri Panyushev independently :
\begin{question}\label{question}
Does $\mathfrak{q}_{S,T}$ has a Richardson element ?
\end{question}

It is well-known that if $\mathfrak{q}_{S,T}$ is a parabolic subalgebra, then
it has a Richardson element \cite{Richardson}
(see also \cite[Chapter 33]{TYbook} for a proof).

\section{Seaweed Lie algebras in $\mathrm{gl}(V)$ and weakly opposite flags}
\label{typeA:section}

Let $\mathfrak{g}=\mathrm{gl}(V)$, where $V$ is a $\field$-vector space
of dimension $n>0$, and $\mathbf{G}=\mathrm{GL}(V)$.
In this section, we first explain the relations between elements in
the nilpotent radical $\mathfrak{n}$ of a seaweed Lie algebra
$\mathfrak{q}$ and $\Delta$-filtered
$k\tilde{Q}/\mathcal{I}$-modules, where $Q$ is a quiver of type $A$
determined by $\mathfrak{q}$. We then study the connection between
$\mathbf{Q}$-orbits in $\mathfrak{n}$ and $\gl(\dv d)$-orbits in the
variety $\rep_{\Delta}(\tilde{Q}, \mathcal{I}, \dv d)$, where
$\mathbf{Q}$ is the closed subgroup of $\mathbf{G}$ associated to $\mathfrak{q}$ and $\dv
d$ is a vector determined by $\mathfrak{q}$. At the end we explain
how to find conveniently an explicit Richardson element from the
exceptional $\Delta$-filtered module $M(\dv d)$ constructed in
Section \ref{exceptional:section}.  Seaweed Lie
algebras in $\mathfrak{g}$ has a nice description via stabilizers of
certain pairs of partial flags.

Let $\Xi$ be the set of sequences of integers
$(a_{i})_{0\leq i\leq r}$ ($r$ not being fixed), verifying
$$
0=a_{0} < a_{1} < \cdots < a_{r}=n.
$$
For $\mathbf{a}=(a_{i})_{0\leq i\leq r} \in  \Xi$, let $\mathcal{F}_{\mathbf{a}}$ denote
the set of (partial) flags
$$
0=V_{0} \subset V_{1}\subset \cdots \subset V_{r}=V
$$
such that $\dim V_{i}=a_{i}$.

Recall that given a partial flag $\mathbf{F}$, its stabilizer
$\mathfrak{p}_{\mathbf{F}}$ (resp. $\mathbf{P}_{\mathbf{F}}$) in
$\mathfrak{g}$ (resp. in $\mathbf{G}$) is a parabolic subalgebra
(resp. parabolic subgroup), and this induces a bijection between the
set of all partial flags and the set of parabolic subalgebras of
$\mathfrak{g}$ (resp. the set of parabolic subgroups of
$\mathbf{G}$).

Let $B=(e_{1},\dots ,e_{n})$ be a basis for $V$.
A partial flag $\mathbf{F}=(V_{i})_{0\leq i\leq r}$ will be called $B$-upper
if $V_{i}=\mathrm{Vect}(e_{1}, \dots , e_{\dim V_{i}})$ for $1\leq i\leq r$.
Similarly, $\mathbf{F}$ is $B$-lower if $\mathbf{F}$ is
$(e_{n},\dots ,e_{1})$-upper.

A pair of partial flags $(\mathbf{F},\mathbf{F}')$ is called weakly opposite
if there exists a basis $B$ such that $\mathbf{F}$ is $B$-upper and $\mathbf{F}'$ is
$B$-lower.

It follows from Proposition \ref{seaweedstandard} that the map
$$
(\mathbf{F},\mathbf{F}')\mapsto \mathfrak{p}_{\mathbf{F}} \cap \mathfrak{p}_{\mathbf{F}'}
$$
induces a bijection between the set of pairs of weakly opposite flags and the set
of seaweed Lie algebras in $\mathfrak{g}$.

We shall associate to a pair of weakly oppposite flags a quiver of type $A$
and a dimension vector. Let us start with an example.

\subsection{A guiding example}
Take the seaweed Lie algebra in Example \ref{seaweed:example}. It corresponds to
a pair of weakly opposite flags $(\mathbf{F},\mathbf{F}') \in
\mathcal{F}_{\mathbf{a}}\times \mathcal{F}_{\mathbf{b}}$
where $\mathbf{a}=(0,3,4,9)$ and $\mathbf{b}=(0,2,8,9)$. Let $B=(e_{i})_{1\leq i\leq 9}$
be a basis of $V$
such that $\mathbf{F}$ is $B$-upper and $\mathbf{F}'$ is $B$-lower.
In this basis $B$, the elements of
$\mathfrak{q}=\mathfrak{p}_{\mathbf{F}}\cap \mathfrak{p}_{\mathbf{F}'}$
has the following matrix form
$$
\begin{array}{r}
\left. \begin{array}{l}
\\
\end{array} \right. E_{1}
\\ \hline
\left. \begin{array}{l}
\\ \\
\end{array} \right. E_{2}
\\ \hline
\left. \begin{array}{l}
\\
\end{array} \right. E_{3}
\\ \hline
\left. \begin{array}{l}
\\ \\ \\
\end{array} \right. E_{4}
\\ \hline
\left. \begin{array}{l}
\\ \\
\end{array} \right. E_{5}
\end{array}
\left(
\begin{array}{c|cc|c|ccc|cc}
* & 0 & 0 & 0 & 0 & 0 & 0 & 0 & 0  \\ \hline
* & * & * & * & * & * & * & 0 & 0  \\
* & * & * & * & * & * & * & 0 & 0  \\ \hline
0 & 0 & 0 & * & * & * & * & 0 & 0  \\ \hline
0 & 0 & 0 & 0 & * & * & * & 0 & 0 \\
0 & 0 & 0 & 0 & * & * & * & 0 & 0 \\
0 & 0 & 0 & 0 & * & * & * & 0 & 0 \\ \hline
0 & 0 & 0 & 0 &  * & * & * & * & *  \\
0 & 0 & 0 & 0 &  * & * & * & * & * \\
\end{array}
\right),
$$
where we set
$$
E_{1}=\mathrm{Vect}(e_{1}) \ , \
E_{2}=\mathrm{Vect}(e_{2},e_{3}) \ , \
E_{3}=\mathrm{Vect}(e_{4}) \ , \
E_{4}=\mathrm{Vect}(e_{5},e_{6},e_{7}) \ , \
E_{5}=\mathrm{Vect}(e_{8},e_{9}).
$$
The Levi factor $\mathfrak{l}$ of $\mathfrak{q}$ is then
$\mathrm{gl}(E_{1})\times \cdots \times \mathrm{gl}(E_{5})$.

For $1\leq i\leq 5$, we set $Y_{i}$ to be the sum of the subspaces
$E_{k}$ corresponding to non-zero blocks in the i-th block-column.
So
$$
Y_{1}=E_{1}\oplus E_{2} \ ,\ Y_{2}=E_{2} \ , \
Y_{3}=E_{2}\oplus E_{3} \ , \ Y_{4}=E_{2}\oplus E_{3}\oplus E_{4} \oplus E_{5} \ , \
Y_{5}=E_{5}.
$$
We define a quiver $Q$ with the set of vertices $Q_{0}=\{1 ,2,3,4,5\}$, and we have an arrow
from $i$ to $i\pm 1$ if $Y_{i}\subset Y_{i\pm 1}$. Thus, $Q$ is a quiver of type $A_{5}$ in the
following orientation :
$$
\xymatrix{ 1 & 2 \ar[l]_{\alpha_{1}} { } \ar[r]^{\alpha_{2}} & 3 \ar[r]^{\alpha_{3}} & 4 & 5 \ar[l]_{\alpha_{4}} }
$$

Now let $\mathfrak{n}$ denote the nilpotent radical of $\mathfrak{q}$.
Given $x\in \mathfrak{n}$, in the basis $B$, the matrix of $x$ has zero entries on the diagonal
blocks. We have by inspection that
$$
x(Y_{1}) \subset Y_{2} \ , \ x(Y_{2})=0 \ , \
x(Y_{3}) \subset Y_{2} \ , \ x(Y_{4})\subset Y_{3} \oplus Y_{5}
\ , \ x(Y_{5})= 0.
$$

We denote by $\beta_i$ the reverse arrow of $\alpha_i$ in $\tilde{Q}$.
We define the following representation $M^{x}$ of the double $\tilde{Q}$ of this quiver $Q$ :
the vector space $M^{x}_{i}$ is $Y_{i}$, $M^{x}_{\alpha_{i}}$ is just the canonical injection, while
$M^{x}_{\beta_{i}}$ is the projection onto $Y_{s(\alpha_{i})}$ with respect to the direct sum
decomposition above of the restriction of $x$ to $Y_{t(\alpha_{i})}$.
For example
$M^{x}_{\beta_{3}}= \pi_{1} \circ x|_{Y_{4}}$, where
$\pi_{1} : Y_{3} \oplus Y_{5} \rightarrow Y_{3}$ is the canonical projection with respect to
this direct sum decomposition.

It is now a straightforward verification that $M^{x}$ is a representation of $(\tilde{Q},\mathcal{I})$,
and since all the $\alpha_{i}$ are injective, it follows from Proposition \ref{HVTheorem2} that
$M^{x}$ is $\Delta$-filtered.

We have therefore a map from $\mathfrak{n}$ to $\rep_\Delta(\tilde{Q}, {\mathcal I}, \dv d)$
where $\dv d=(\dim Y_{i})_{1\leq i\leq 5}$.

\subsection{The formal construction}
We shall now describe this construction formally.

Let us fix $\mathbf{a}=(a_{i})_{0\leq i\leq r}$,
$\mathbf{b}=(b_{j})_{0\leq j\leq r'} \in \Xi$,
and $(\mathbf{F},\mathbf{F}')\in \mathcal{F}_{\mathbf{a}}\times \mathcal{F}_{\mathbf{b}}$
a pair of weakly opposite flags. Let $B=(e_{1},\dots ,e_{n})$ be a basis of $V$ such that
$\mathbf{F}$ is $B$-upper and $\mathbf{F}'$ is $B$-lower. Then
$\mathbf{F} =(V_{i})_{0\leq i\leq r}$ and $\mathbf{F}'=(V_{j}')_{0\leq j\leq r'}$
where $V_{0}=V_{0}'=\{ 0\} $ and $V_{i}=\mathrm{Vect} (e_{1},\dots ,e_{a_{i}})$ for $i>0$,
$V_{j}'=\mathrm{Vect}(e_{n-b_{j}+1},\dots ,e_{n})$ for $j>0$.

Let $0=m_{0} < m_{1} < \cdots < m_{l}=n$ be integers such that $\{m_{0} ,\dots ,m_{l}\}$
is the union of the $a_{i}$ and $n-b_{j}$, $0\leq i\leq r$, $0\leq j\leq r'$. So for each $0\leq s\leq l$,
either $\mathrm{Vect}(e_{1},\dots ,e_{m_{s}})$ is some $V_{i}$ or
$\mathrm{Vect}(e_{m_{s}+1}, \dots ,e_{n})$ is some $V_{j}'$.

For $1\leq s\leq l$, set
$$
E_{s} = \mathrm{Vect}(e_{m_{s-1}+1},\dots ,e_{m_{s}}),
\hbox{ and }
Y_{s} = V_{u(s)} \cap V_{\ell (s)}',
$$
where $u(s)=\min \{ i ; E_{s} \subseteq V_{i}\}$
and $\ell (s)=\min \{ j ; E_{s} \subseteq V_{j}'\}$.

By definition, for $1\leq s\leq l$,
we have either $E_{1}\oplus \cdots \oplus E_{s}=V_{i}$ for some $1\leq i\leq r$, or
$E_{s+1}\oplus\cdots \oplus E_{l}=V_{j}'$ for some $1\leq j\leq r'$.
Clearly, all the $V_{i}$ (resp. $V_{j}'$) are obtained in this way. So we can find
$p\leq s\leq q$ such that
$$
V_{u(s)} = E_{1}\oplus \cdots \oplus E_{q} \ , \
V_{\ell (s)}' = E_{p}\oplus \cdots \oplus E_{l} \hbox{ and }
Y_{s}=E_{p}\oplus \cdots \oplus E_{q}.
$$
In particular, $Y_{s}\supseteq E_{s}$.

\begin{lemma}\label{Ylemma}
Let $1\leq s\leq l-1$. Then we have one of the following 3 disjoint configurations:
\begin{itemize}
\item[(1)] $Y_{s}\cap Y_{s+1}=\{ 0\}$.
\item[(2)] $Y_{s}\subsetneq Y_{s+1}$.
\item[(3)] $Y_{s+1}\subsetneq Y_{s}$.
\end{itemize}
Moreover, for $1\leq s\leq l$ we have,
$$
\left\{ \begin{array}{lll}
Y_{s}=Y_{s-1}\oplus E_{s} &  \hbox{if  }\; Y_{s-1}\subseteq Y_{s} \hbox{ and } Y_{s+1}\not\subseteq Y_{s}, \\
Y_{s}=Y_{s-1}\oplus E_{s}\oplus Y_{s+1} & \hbox{if }\; Y_{s-1}\subseteq Y_{s} \hbox{ and }
Y_{s+1}\subseteq Y_{s}, \\
Y_{s}=Y_{s+1}\oplus E_{s} & \hbox{if }\; Y_{s-1}\not\subseteq Y_{s} \hbox{ and } Y_{s+1}\subseteq Y_{s}, \\
Y_{s}=E_{s} & \hbox{if } \; Y_{s-1}\not\subseteq Y_{s} \hbox{ and } Y_{s+1}\not\subseteq Y_{s} ,
\end{array}\right.
$$
where by convention, we set $Y_{0}=Y_{l+1}=\{ 0 \}$.
\end{lemma}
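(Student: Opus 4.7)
The plan is to extract an explicit description of the $Y_{s}$ inside the $E_{\bullet}$-decomposition, and then verify both assertions by direct inspection. Since $\{m_{0},\dots,m_{l}\} = \{a_{i}\}\cup\{n-b_{j}\}$, for each $i$ there is a unique index $c_{i}$ with $a_{i}=m_{c_{i}}$ and for each $j$ a unique $d_{j}$ with $n-b_{j}=m_{d_{j}}$, so that $V_{i}=E_{1}\oplus\cdots\oplus E_{c_{i}}$ and $V'_{j}=E_{d_{j}+1}\oplus\cdots\oplus E_{l}$. The containment $E_{s}\subseteq V_{i}$ (resp.\ $E_{s}\subseteq V'_{j}$) amounts to $c_{i}\geq s$ (resp.\ $d_{j}\leq s-1$); setting $q_{s} := \min\{c_{i} : c_{i}\geq s\}$ and $p_{s} := 1+\max\{d_{j} : d_{j}\leq s-1\}$, I obtain the clean formula $Y_{s} = E_{p_{s}}\oplus\cdots\oplus E_{q_{s}}$ with $p_{s}\leq s\leq q_{s}$.

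The combinatorial heart is the following dichotomy, which I would prove next. Either $q_{s+1}=q_{s}$ (and then $q_{s}\geq s+1$), or $q_{s}=s$ and $s$ itself belongs to $\{c_{i}\}$; symmetrically, either $p_{s+1}=p_{s}$, or $p_{s+1}=s+1$ and $s$ belongs to $\{d_{j}\}$. Indeed, if $q_{s}\geq s+1$ then $q_{s}$ is already a candidate for $q_{s+1}$ while $q_{s+1}\geq s+1>s$ is a candidate for $q_{s}$, forcing equality; the statement for $p$ is analogous. Now for $1\leq s\leq l-1$, the fact that $m_{s}\in\{a_{i}\}\cup\{n-b_{j}\}$ forces $s$ to lie in $\{c_{i}\}$ or $\{d_{j}\}$, and the three cases $s\in\{c_{i}\}\setminus\{d_{j}\}$, $s\in\{d_{j}\}\setminus\{c_{i}\}$, $s\in\{c_{i}\}\cap\{d_{j}\}$ give respectively $Y_{s}\subsetneq Y_{s+1}$ (from $q_{s}=s<q_{s+1}$ and $p_{s}=p_{s+1}$), $Y_{s+1}\subsetneq Y_{s}$, and $Y_{s}\cap Y_{s+1}=\{0\}$ (the intervals $[p_{s},s]$ and $[s+1,q_{s+1}]$ are then disjoint). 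This proves parts (1)--(3).

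For the moreover part, the same dichotomy translates the four hypotheses into conditions on the endpoints: $Y_{s+1}\subseteq Y_{s}$ if and only if $q_{s}>s$, in which case $p_{s+1}=s+1$ and $Y_{s+1}=E_{s+1}\oplus\cdots\oplus E_{q_{s}}$; symmetrically, $Y_{s-1}\subseteq Y_{s}$ if and only if $p_{s}<s$, in which case $Y_{s-1}=E_{p_{s}}\oplus\cdots\oplus E_{s-1}$. The four subcases of the identity are then read off by decomposing the interval $[p_{s},q_{s}]$ into the (possibly empty) prefix $[p_{s},s-1]$, the singleton $\{s\}$, and the (possibly empty) suffix $[s+1,q_{s}]$, each of which matches the corresponding direct summand $Y_{s-1}$, $E_{s}$, or $Y_{s+1}$. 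The boundary cases $s=1$ and $s=l$, with the convention $Y_{0}=Y_{l+1}=\{0\}$, fit seamlessly because $p_{1}=1$ (as $0=d_{r'}$) and $q_{l}=l$ (as $l=c_{r}$), automatically. The entire argument is elementary index bookkeeping; the only mild subtlety is verifying that the formulas remain correct at the two boundary indices, which is not a real conceptual obstacle.
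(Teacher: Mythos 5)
Your proof is correct and is essentially the same argument as the paper's: the key observations — that the ``break'' indices are weakly monotone with unit jumps, and that for $1\leq s\leq l-1$ each $m_{s}$ is an $a_{i}$ or an $n-b_{j}$, forcing at least one of the two endpoint sequences to move at step $s$ — are exactly those in the published proof, just repackaged through the interval $[p_{s},q_{s}]$ instead of the indices $u(s),\ell(s)$. The paper leaves the ``moreover'' as an easy consequence; your explicit decomposition of $[p_{s},q_{s}]$ into prefix, $\{s\}$, and suffix is a slightly more mechanical way of saying the same thing.
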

\begin{proof}
Observe first that by the definition of the $E_{s}$, the sequence
$(u(s))_{s=1,\dots ,l}$ (resp. $(l(s))_{s=1,\dots ,l}$) is weakly increasing
(resp. weakly decreasing), and moreover, if $u(s+1)> u(s)$ (resp. $l(s) > l(s+1)$),
then $u(s+1)=u(s)+1$ (resp. $l(s)=l(s+1)+1$).

Recall that $E_{s}=\mathrm{Vect}(e_{m_{s-1}+1}, \dots ,e_{m_{s}})$,
and that either $\mathrm{Vect}(e_{1},\dots ,e_{m_{s}})=V_{i}$ for
some $i$, or $\mathrm{Vect}(e_{m_{s}+1},\dots ,e_{n})=V_{j}'$ for
some $j$. In the first case, we have $i=u(s)$ and $u(s+1)>u(s)$,
while in the second case, we have $j=l(s)-1=l(s+1)$. Thus $(u(s) ,
l(s) ) \neq (u(s+1),l(s+1))$.

Now let $1\leq s\leq l-1$. Let $p \leq s\leq q$ be such that
$V_{u(s)}=E_{1}\oplus  \cdots \oplus E_{q}$ and
$V_{l(s)}'=E_{p} \oplus \cdots \oplus E_{l}$. So
$Y_{s} = E_{p} \oplus \cdots \oplus E_{q}$.

We have $3$ (disjoint) possibilities :

\begin{itemize}


\item[(1)] $u(s)=u(s+1)$.
Then $l(s)=l(s+1)+1$, and hence $Y_{s+1} = V_{u(s+1)} \cap V_{l(s+1)}'
\subseteq V_{u(s)} \cap V_{l(s)}' = Y_{s}$. Furthermore, since $l(s+1)<l(s)$,
$E_{s}\not\subseteq V_{l(s+1)}'$. It follows that
$V_{l(s+1)}' = E_{s+1}\oplus \cdots \oplus E_{l}$. Thus
$$
Y_{s+1} = E_{s+1}\oplus \cdots \oplus E_{q} \subsetneq E_{p}\oplus \cdots\oplus E_{q}=Y_{s}.
$$

\item[(2)] $l(s)=l(s+1)$.
Then $u(s+1)=u(s)+1$, and a similar argument shows that
$V_{u(s)}=E_{1}\oplus \cdots \oplus E_{s}$, and
$$
Y_{s}=E_{p}\oplus \cdots\oplus E_{s} \subsetneq E_{p} \oplus \cdots \oplus E_{m_{u(s+1)}} = Y_{s+1}.
$$

\item[(3)]$u(s)<u(s+1)$ and $l(s)>l(s+1)$.
Then from the above arguments, we have $V_{u(s)}=E_{1}\oplus \cdots \oplus E_{s}$, and
$V_{l(s+1)}' = E_{s+1}\oplus \cdots \oplus E_{l}$. Hence $Y_{s} \cap Y_{s+1} =\{ 0\}$.
\medskip
\end{itemize}
We have therefore proved  the first part of the lemma. The second
part follows easily from the $3$ cases above and the fact that
$E_{s}\subseteq Y_{s}$.
\end{proof}

As in the example, we define a  quiver $Q$ whose vertices are the integers
$\{1,\dots ,l\}$, and for $i=1,\dots ,l-1$, we have an arrow
$$
\xymatrix{ i \ar[r]^{\alpha_{i}} & {i+1} } \hbox{ if } Y_{i}\subset Y_{i+1} \hbox{ or }
\xymatrix{ i & {i+1} \ar[l]_{\alpha_{i}} } \hbox{ if } Y_{i}\supset Y_{i+1}.
$$
Note that $Q$ is not necessarily connected, however each of its connected components
is of type $A$.

Now let $\mathfrak{q}=\mathfrak{p}_{\mathbf{F}}\cap \mathfrak{p}_{\mathbf{F}'}$
be the seaweed Lie algebra associated to $(\mathbf{F},\mathbf{F}')$.
Then its Levi factor $\mathfrak{l}$ is $\mathrm{gl}(E_{1}) \times \cdots \times \mathrm{gl}(E_{l})$.

Let $\mathfrak{n}$ be the nilpotent radical of $\mathfrak{q}$. For
$x\in\mathfrak{n}$, we have that  $x(E_{s}) \subseteq
\bigoplus_{i\neq s} E_{i}$ for all $s=1,\dots ,l$. Since elements of
$\mathfrak{q}$ stabilizes the two flags, it follows from Lemma
\ref{Ylemma} that
$$
\left\{ \begin{array}{lll}
x(Y_{s}) \subseteq Y_{s-1} &  \hbox{if } Y_{s-1}\subseteq Y_{s} \hbox{ and } Y_{s+1}\not \subseteq Y_{s}, \\
x(Y_{s}) \subseteq Y_{s-1}\oplus Y_{s+1} & \hbox{if } Y_{s-1}\subseteq Y_{s} \hbox{ and }
Y_{s+1}\subseteq Y_{s}, \\
x(Y_{s}) \subseteq Y_{s+1} & \hbox{if } Y_{s-1}\not\subseteq Y_{s} \hbox{ and } Y_{s+1}\subseteq Y_{s}, \\
x(Y_{s})=\{0\} & \hbox{if } Y_{s-1}\not\subseteq Y_{s} \hbox{ and } Y_{s+1} \not\subseteq Y_{s}.
\end{array}\right.
$$
Clearly, any $x$ verifying these conditions is in $\mathfrak{n}$.

Again we denote by $\beta_i$ the reverse arrow of $\alpha_i$ in
$\tilde{Q}$. We define a representation $M^{x}$ of the double
$\tilde{Q}$ of the quiver $Q$ as follows : $M^{x}_{i}=Y_{i}$,
$M^{x}_{\alpha_{i}}$ is just the canonical injection, and
$M^{x}_{\beta_{i}}$ is the projection to $Y_{s(\alpha_{i})}$ with
respect to the direct sum decompositions in Lemma \ref{Ylemma} of the
restriction of $x$ to $Y_{t(\alpha_{i})}$.

We verify easily that $M^{x}\in \rep_{\Delta} (\tilde{Q},\mathcal{I},\dv d)$ where
$\dv d=(\dim Y_{i})_{1\leq i\leq l}$.

Let us denote by $\mathbf{Q}= \mathbf{P}_{\mathbf{F}}\cap
\mathbf{P}_{\mathbf{F}'}$. Then $\sigma \in \mathbf{Q}$ if and only
if $\sigma$ stabilizes both flags, or equivalently, $\sigma$
stabilizes $Y_{s}$ for $s=1,\dots, l$.

\begin{theorem}\label{correspondence}
There is a bijection between the $\mathbf{Q}$-orbits in
$\mathfrak{n}$ and the $\gl (\dv d)$-orbits in $\rep_{\Delta}
(\tilde{Q},\mathcal{I},\dv d)$.

In particular, $\mathfrak{n}$ contains an open $\mathbf{Q}$-orbit if
and only if $\rep_{\Delta} (\tilde{Q},\mathcal{I},\dv d)$ contains
an open $\gl (\dv d)$-orbit.
\end{theorem}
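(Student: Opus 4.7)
\emph{Plan.} The strategy is to factor the desired bijection through the affine subspace $R^\alpha\subseteq \rep_\Delta(\tilde{Q},\mathcal{I},\dv d)$ introduced in Section \ref{doublequiver}. The key step is to show that the assignment $x\mapsto M^x$ defines an isomorphism of affine spaces $\Phi: \mathfrak{n}\longrightarrow R^\alpha$. Injectivity is immediate: by Lemma \ref{Ylemma}, an element $x\in\mathfrak{n}$ is entirely determined by the collection of its projections $Y_s\to Y_{s\pm 1}$ through the direct sum decompositions, and these projections are precisely the components $M^x_{\beta_i}$. For surjectivity, given $M\in R^\alpha$, I would reverse the construction: for each $s$, define $x|_{Y_s}$ so that its components in $Y_{s-1}$ and in $Y_{s+1}$ agree with the appropriate $M_{\beta_i}$ using the decompositions of Lemma \ref{Ylemma}; the defining relations of $\mathcal{I}$ (in particular the commutation relations $\beta_{j-1}\alpha_{j-1}=\alpha_j\beta_j$ and $\alpha_{j-1}\beta_{j-1}=\beta_j\alpha_j$ in Section \ref{doublequiverA}) ensure that these local prescriptions agree on overlaps $Y_s\cap Y_{s'}$ and assemble into a single endomorphism $x$ of $V$ stabilizing both flags, with zero diagonal blocks, i.e. $x\in\mathfrak{n}$.

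Next I would establish equivariance for the natural group actions. Restriction to each $Y_s$ gives a homomorphism $\rho:\mathbf{Q}\longrightarrow \gl(\dv d)$, $\sigma\mapsto (\sigma|_{Y_s})_{1\leq s\leq l}$. Any $\sigma\in\mathbf{Q}$ stabilizes every $Y_{s-1}$, $Y_s$, $Y_{s+1}$, hence preserves the direct sum decompositions of Lemma \ref{Ylemma}; therefore $\sigma$ commutes with the projections used in the definition of $M^x_{\beta_i}$, and a direct computation yields $\Phi(\sigma x \sigma^{-1})=\rho(\sigma)\cdot \Phi(x)$. Conversely, an element $(g_s)\in\gl(\dv d)$ sends $R^\alpha$ into itself exactly when $g_{t(\alpha)}\circ M_\alpha = M_\alpha\circ g_{s(\alpha)}$ for every $\alpha\in Q_1$, which (after identification via the standard embeddings) forces $g_{t(\alpha)}|_{Y_{s(\alpha)}}=g_{s(\alpha)}$; these compatibility conditions are precisely those needed to glue the $g_s$ into an automorphism of $V$ preserving every $Y_s$, that is, an element of $\mathbf{Q}$. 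This identifies the stabilizer $\mathbf{H}$ of $R^\alpha$ in $\gl(\dv d)$ with the image $\rho(\mathbf{Q})$.

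Putting the pieces together, $\Phi$ induces a bijection between $\mathbf{Q}$-orbits in $\mathfrak{n}$ and $\mathbf{H}$-orbits in $R^\alpha$. Proposition \ref{irreducibility} asserts that every $\gl(\dv d)$-orbit in $\rep_\Delta(\tilde{Q},\mathcal{I},\dv d)$ meets $R^\alpha$, and the compatibility above shows that each such intersection is a single $\mathbf{H}$-orbit; hence $\mathbf{Q}$-orbits in $\mathfrak{n}$ are in bijection with $\gl(\dv d)$-orbits in $\rep_\Delta(\tilde{Q},\mathcal{I},\dv d)$. For the open orbit statement, we use irreducibility of $\rep_\Delta(\tilde{Q},\mathcal{I},\dv d)$ (again Proposition \ref{irreducibility}): an open $\gl(\dv d)$-orbit is dense, so its intersection with $R^\alpha$ is a dense open $\mathbf{H}$-orbit, which under the homeomorphism $\Phi$ corresponds to a dense open $\mathbf{Q}$-orbit in $\mathfrak{n}$; the converse direction follows by saturating under $\gl(\dv d)$. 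The main obstacle is the explicit case-by-case verification that $\Phi$ is well-defined and bijective, which is straightforward but tedious, as one must carefully match each $M_{\beta_i}$ to a particular block of the matrix of $x$ according to the configurations in Lemma \ref{Ylemma}.
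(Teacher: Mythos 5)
Your proposal is correct and follows essentially the same route as the paper's own proof: factor through $R^{\alpha}$, show $\Phi: x\mapsto M^x$ is a bijection onto $R^{\alpha}$, identify the stabilizer of $R^{\alpha}$ in $\gl(\dv d)$ with $\mathbf{Q}$ via restriction to the $Y_s$, and then combine these with Proposition \ref{irreducibility} to get the orbit correspondence and the density argument for the open-orbit statement. The only cosmetic difference is in how the inverse of $\Phi$ is presented: the paper defines $x$ directly on the direct-sum decomposition $V=E_1\oplus\cdots\oplus E_l$ (so no overlap issues arise), whereas you define $x$ locally on the (overlapping) $Y_s$ and invoke the relations in $\mathcal{I}$ to glue; both are fine, but the paper's version is slightly tidier since the $E_i$ decompose $V$.
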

\begin{proof}
Without loss of generalities, we may fix the underlying vector space to be
$(Y_{i})_{1\leq i\leq l}$. Recall from Proposition \ref{irreducibility}
that $R^{\alpha}$, the set of elements $M$ in $\rep_{\Delta} (\tilde{Q},\mathcal{I},\dv d)$
such that $M_{\alpha_{i}}$ is the canonical injection for each $i$, meets every
$\gl (\dv d)$-orbit in $\rep_{\Delta} (\tilde{Q},\mathcal{I},\dv d)$.

By the construction above, we have an injective map
$$
\Phi : \mathfrak{n}\longrightarrow R^{\alpha} \ , \ x\mapsto M^{x}.
$$

(1) Let $M\in R^{\alpha}$. Recall that $V=E_{1}\oplus\cdots \oplus
E_{l}$, so any $v\in V$ is written uniquely as a sum $v=v_{1}+\cdots
+v_{l}$ where $v_{i}\in E_{i}$ for $1\leq i\leq l$.

Since $M_{\alpha_{j}}$ is the canonical injection for all $j$ and by the definition of
$\mathcal{I}$, we may define
$$
x (v_{i} ) = \left\{
\begin{array}{ll}
M_{\beta_{j}} (v_{i} ) & \hbox{if } E_{i}\subset Y_{s (\beta_{j})}, \\
0 & \hbox{otherwise.}
\end{array}
\right.
$$

Extending by linearity to $v$, we obtain an endomorphism $x$ of $V$
which clearly sends $Y_{s(\beta_{j})}$ into $Y_{t(\beta_{j})}$. So the
relations in $\mathcal{I}$ imply that $x\in\mathfrak{n}$. It is now straightforward
to check that $\Phi (x)=M$, and hence $\Phi$ is surjective.

Consequently, the map $\Phi$ is a bijection.

(2) Let $\sigma \in \gl (\dv d)$ be such that $\sigma (R^{\alpha})
\subset R^{\alpha}$. Then this is equivalent to the condition that
for any $i$, we have
$$
\xymatrix{ Y_{s(\alpha_{i})}  \ar[r]^{M_{\alpha_{i}}} \ar[d]_{\sigma_{s(\alpha_{i})}} & Y_{t(\alpha_{i})}
\ar[d]^{\sigma_{t(\alpha_{i})}} \\
Y_{s(\alpha_{i})}  \ar[r]^{M_{\alpha_{i}}} & Y_{t(\alpha_{i})}
}
$$
It follows that $\sigma_{s(\alpha_{i})}$ is the restriction of
$\sigma_{t(\alpha_{i})}$ to $Y_{s(\alpha_{i})}$. This allows us to
define a (unique) element $\tilde{\sigma}\in \mathbf{G}$ such that
$\sigma|_{Y_{j}}=\sigma_{j}$ for all $j\in Q_{0}$. Clearly,
$\tilde{\sigma}\in \mathbf{Q}$.

Conversely, given $\tau\in \mathbf{Q}$, then its restrictions to
$Y_{i}$ gives an element of $\gl (\dv d)$ which clearly stabilizes
$R^{\alpha}$. We may therefore identify $\mathbf{Q}$ with the
stabilizer of $R^{\alpha}$ in $\gl (\dv d)$. It follows that if
$M\in R^{\alpha}$, then the intersection of the $\gl(\dv d)$-orbit
of $M$ in $\rep(\tilde{Q}, \mathcal{I}, \dv d)$ with $R^{\alpha}$ is
$$
( \gl (\dv d) . M )\cap R^{\alpha} = \mathbf{Q}. M.
$$
Consequently, there is a bijective correspondence between
$\mathbf{Q}$-orbits in $R^{\alpha}$ and $\gl (\dv d)$-orbits in
$\rep_{\Delta} (\tilde{Q},\mathcal{I},\dv d)$.

(3) By Point (1) and a direct check, the map $\Phi$ is a
$\mathbf{Q}$-equivariant bijection, and so there is a one-to-one
correspondence between $\mathbf{Q}$-orbits in $\mathfrak{n}$ and
$\mathbf{Q}$-orbits in $R^{\alpha}$ which in turn, is in bijection
with $\gl (\dv d)$-orbits in $\rep_{\Delta}
(\tilde{Q},\mathcal{I},\dv d)$ by Point (2).

Finally, if $\rep_{\Delta} (\tilde{Q},\mathcal{I},\dv d)$ has an
open $\gl (\dv d)$-orbit, then $R^{\alpha}$ has an open
$\mathbf{Q}$-orbit, and so has $\mathfrak{n}$ by the above
correspondence.

Conversely, if $\mathfrak{n}$ has an open $\mathbf{Q}$-orbit, then
$R^{\alpha}$ has an open $\mathbf{Q}$-orbit, say $\Omega$. Then
$$
\overline{ \gl (\dv d) . \Omega } \supset \gl (\dv d). \overline{ \Omega } =\gl (\dv d).R^{\alpha} =
\rep_{\Delta} (\tilde{Q},\mathcal{I},\dv d).
$$
Hence $\rep_{\Delta} (\tilde{Q},\mathcal{I},\dv d)$ has an open $\gl (\dv d)$-orbit.
\end{proof}

It follows from Theorem \ref{correspondence} and
Theorem \ref{maintheorem1} that Question \ref{question}
has a positive answer for seaweed Lie algebras in $\mathrm{gl}(V)$.

\begin{corollary}
Let $\mathfrak{q}$ be a seaweed Lie algebra in $\mathrm{gl}(V)$. Then
$\mathfrak{q}$ has a Richardson element.
\end{corollary}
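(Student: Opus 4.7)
The plan is to derive this corollary directly by combining the two main theorems of the paper, namely Theorem \ref{maintheorem1} and Theorem \ref{correspondence}. First, I would associate to the seaweed Lie algebra $\mathfrak{q}$ a pair of weakly opposite flags $(\mathbf{F},\mathbf{F}')$ whose common stabilizer is $\mathfrak{q}$, and extract the associated quiver $Q$ (each of whose connected components is of type $A$) together with the dimension vector $\dv d = (\dim Y_i)_{1\leq i\leq l}$, exactly as in Section \ref{typeA:section}.

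Second, I would invoke Theorem \ref{correspondence}, which reduces the existence of a Richardson element in $\mathfrak{q}$ (equivalently, an open $\mathbf{Q}$-orbit in $\mathfrak{n}$) to the existence of an open $\gl(\dv d)$-orbit in the variety $\rep_{\Delta}(\tilde{Q},\mathcal{I},\dv d)$.

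To verify the latter, I would first note that $\rep_{\Delta}(\tilde{Q},\mathcal{I},\dv d)$ is non-empty: the zero element of $\mathfrak{n}$ maps under the bijection $\Phi$ of Theorem \ref{correspondence} to a $\Delta$-filtered representation of dimension vector $\dv d$. By Proposition \ref{HVTheorem2}, any such module is projective as a $kQ$-module and thus decomposes uniquely up to isomorphism into indecomposable projective summands, producing a well-defined $\Delta$-dimension vector $\dv c$ with $\dv d = \sum_i c_i \underline{\dim}(\Delta(i))$. Theorem \ref{maintheorem1}, applied componentwise if $Q$ is not connected, then yields an exceptional $\Delta$-filtered module $M(\dv c)$ of $\Delta$-dimension vector $\dv c$. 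By Voigt's lemma, its $\gl(\dv d)$-orbit is open in $\rep_{\Delta}(\tilde{Q},\mathcal{I},\dv d)$, and by the irreducibility of this variety (Proposition \ref{irreducibility}), this open orbit is dense. Pulling back through $\Phi$ produces the desired Richardson element in $\mathfrak{n}$.

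Since the statement is essentially a formal consequence of the two main theorems, there is no serious obstacle at this stage; all the substantive work lies in the constructions of Sections \ref{exceptional:section} and \ref{typeA:section}. The only minor point to record is that the quiver $Q$ arising from a seaweed Lie algebra may fail to be connected, but because $\rep_{\Delta}$ factorizes as a product over the connected components and the $\Delta$-filtered property can be checked component by component, Theorem \ref{maintheorem1} suffices. Moreover, as illustrated by Example \ref{explicit}, the combinatorial gluing procedure underlying the construction of $M(\dv d)$ makes it possible to write down a Richardson element explicitly rather than merely assert its existence.
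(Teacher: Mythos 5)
Your proposal is correct and follows essentially the same route as the paper, which simply cites Theorem \ref{correspondence} and Theorem \ref{maintheorem1}. You additionally spell out two small points the paper leaves implicit --- the conversion from the ordinary dimension vector $\dv d=(\dim Y_i)$ to the $\Delta$-dimension vector via the projectivity of $\Delta$-filtered modules over $kQ$, and the treatment of a possibly disconnected quiver $Q$ by working component by component --- both of which are accurate and harmless.
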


\begin{remark}\label{modality}
Given any integer $p$, we set $\mathfrak{n}_{p}= \{ x\in\mathfrak{n}
; \dim \mathbf{Q}.x = p\}$, and similarly, $R_{p} = \{ M\in
\rep_{\Delta} (\tilde{Q},\mathcal{I},\dv d) ; \dim \gl (\dv
d).M=p\}$. Then via the bijection in Theorem \ref{correspondence},
we obtain readily that
$$
\dim \mathfrak{n}_{p}-p = \dim R_{\dim \gl (\dv d) - \dim \mathbf{Q}
+ p} -(\dim \gl (\dv d) -\dim \mathbf{Q} + p)
$$
whenever $\mathfrak{n}_{p}$ is non-empty. It follows that the
modality of the $\mathbf{Q}$-action on $\mathfrak{n}$ is the same as
the modality of the $\gl (\dv d)$-action on $\rep_{\Delta}
(\tilde{Q},\mathcal{I},\dv d)$ (See \cite{Vinberg} for a definition
of the modality of an action).
\end{remark}

\begin{remark}\label{explicit2}
Using the procedure described in Example \ref{explicit} to construct
an explicit exceptional $\Delta$-filtered module with a given
$\Delta$-dimension vector, we obtain a procedure for constructing an
explicit Richardson element, imitating the construction given in
\cite{BHRR} for parabolic subalgebras (see also \cite{Baur}).

We first determine the quiver $Q$ and the vector $\dv e=(\dim
E_i)_{i=1,\dots ,l}$ associated to a given seaweed Lie algebra. Then
we construct the exceptional $\Delta$-filtered
$k\tilde{Q}/\mathcal{I}$-module $M(\dv e)$ and draw  an arrow
diagram associated to  $M(\dv e)$ as in Example \ref{explicit}. Then
we replace each double arrow $\xymatrix{\Delta(i)\ar@{=>}[r] &
\Delta(j)}$ by $\xymatrix{\Delta(i) & \ar[l]\Delta(j)}$. Finally we
replace column-wise the $\Delta(i)$'s by numbers $1$ to $n$,
starting on the left and from the bottom to the top. Set
$$
x= \sum_{ i \rightarrow j} E_{ij},
$$
where the sum runs over all arrows in the diagram.
Then $x$ is a Richardson element of $\mathfrak{q}$.

For example, the seaweed Lie algebra $\mathfrak{q}$ of Example
\ref{seaweed:example} gives us the quiver $Q$ and the
$\Delta$-dimension vector $\dv e$ as in Example \ref{explicit}. From
the diagram associated to $M(\dv e)$ in Example \ref{explicit}, we
obtain the following arrow diagram.
$$
\xymatrix{ &&& 7 & \ar[l] 9\\
1 & \ar[l] 3 \ar[rr] & & 6 & \ar[l] 8 \\
& 2 \ar[r] & 4 \ar[r] & 5
}
$$
Thus
$$
x=E_{31} + E_{24}+E_{45}+E_{36}+E_{86}+E_{97}=
\left(
\begin{array}{c|cc|c|ccc|cc}
0 & 0 & 0 & 0 & 0 & 0 & 0 & 0 & 0  \\ \hline
0 & 0 & 0 & 1 & 0 & 0 & 0 & 0 & 0  \\
1 & 0 & 0 & 0 & 0 & 1 & 0 & 0 & 0  \\ \hline
0 & 0 & 0 & 0 & 1 & 0 & 0 & 0 & 0  \\ \hline
0 & 0 & 0 & 0 & 0 & 0 & 0 & 0 & 0 \\
0 & 0 & 0 & 0 & 0 & 0 & 0 & 0 & 0 \\
0 & 0 & 0 & 0 & 0 & 0 & 0 & 0 & 0 \\ \hline
0 & 0 & 0 & 0 &  0 & 1 & 0 & 0 & 0  \\
0 & 0 & 0 & 0 &  0 & 0 & 1& 0 & 0 \\
\end{array}
\right)
$$
is a Richardson element of $\mathfrak{q}$.
\end{remark}

\begin{remark}
In this matrix setting, it is possible to extend with the same
proofs Theorem \ref{maintheorem1} and Theorem \ref{correspondence}
(in the case of stabilizers of weakly opposite flags) when the field
is algebraically closed of non-zero characteristic.
\end{remark}

\section{Remarks on seaweed Lie algebras of other types}\label{others:section}

In this section, we shall discuss the existence of Richardson elements in a simple
Lie algebra $\mathfrak{g}$ of type other than $A_{n}$.

Let us use the notations in Section \ref{seaweed:section}, and fix $S,T\subset \Pi$.
Recall that we have the decomposition
$$
\mathfrak{q}_{S,T} = \mathfrak{n}^{-}_{S,T} \oplus \mathfrak{l}_{S,T} \oplus
\mathfrak{n}_{S,T}^{+} \ , \
\mathfrak{n}_{S,T}=\mathfrak{n}_{S,T}^{+} \oplus \mathfrak{n}_{S,T}^{-}.
$$
We have already observed in Section \ref{seaweed:section}
that $\mathfrak{n}_{S,T}$ is the nilpotent radical of
$\mathfrak{q}_{S,T}$, and that $\mathfrak{l}_{S,T}=\mathfrak{q}_{S\cap T,S\cap T}$
is a reductive subalgebra of $\mathfrak{g}$.

We have a nice ``decomposition'' of $\mathfrak{q}_{S,T}$ as a sum of two parabolic subalgebras
of reductive subalgebras of $\mathfrak{g}$. More precisely, we have :

\begin{proposition}\label{structure}
Set $\mathfrak{p}_{S,T}^{\pm} = \mathfrak{l}_{S,T} \oplus \mathfrak{n}_{S,T}^{\pm}$.
Then
\begin{itemize}
\item[(1)] $\mathfrak{p}_{S,T}^{+}=\mathfrak{q}_{S,S\cap T}$ is a parabolic subalgebra
of the reductive Lie algebra $\mathfrak{q}_{S,S}$.
\item[(2)] $\mathfrak{p}_{S,T}^{-}=\mathfrak{q}_{S\cap T,T}$ is a parabolic subalgebra
of the reductive Lie algebra $\mathfrak{q}_{T,T}$.
\item[(3)] $\mathfrak{q}_{S,T} = \mathfrak{p}_{S,T}^{+} + \mathfrak{p}_{S,T}^{-}$
and $\mathfrak{p}_{S,T}^{+} \cap \mathfrak{p}_{S,T}^{-}=\mathfrak{l}_{S,T}$.
\item[(4)] $[\mathfrak{n}_{S,T}^{+} , \mathfrak{n}_{S,T}^{-}] = \{ 0 \}$.
\end{itemize}
\end{proposition}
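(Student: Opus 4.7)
The plan is to verify all four claims by elementary root-space manipulations, exploiting the crucial combinatorial fact that for subsets $S,T\subseteq\Pi$ of simple roots one has $\mathbb{Z}S\cap \mathbb{Z}T=\mathbb{Z}(S\cap T)$ (since $\Pi$ is linearly independent). This identity will power both the identifications in (1)--(2) and the sign analysis in (4).

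For (1), I would unwind the definition of $\mathfrak{q}_{S,S\cap T}=\mathfrak{p}_{S}^{-}\cap\mathfrak{p}_{S\cap T}^{+}$ by intersecting the two root-set indexings and distributing:
\[
(R_{S}\cup R^{-})\cap (R_{S\cap T}\cup R^{+}) \;=\; R_{S\cap T}\cup R_{S}^{+} \;=\; R_{S\cap T}\cup R_{S,T}^{+},
\]
using that $R^{-}\cap R_{S\cap T}^{-}\subseteq R_{S\cap T}$ and that the decomposition $R_{S}^{+}=R_{S\cap T}^{+}\sqcup R_{S,T}^{+}$ holds by definition. This gives $\mathfrak{q}_{S,S\cap T}=\mathfrak{l}_{S,T}\oplus\mathfrak{n}_{S,T}^{+}=\mathfrak{p}_{S,T}^{+}$. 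Since $\mathfrak{q}_{S,S}=\mathfrak{l}_{S}$ is the standard Levi with root system exactly $R_{S}$, and $S\cap T\subseteq S$ gives a standard parabolic subset of simple roots inside $\mathfrak{l}_{S}$, this identification simultaneously shows $\mathfrak{p}_{S,T}^{+}$ is a parabolic subalgebra of $\mathfrak{q}_{S,S}$. Claim (2) follows by the entirely symmetric argument with $S$ and $T$ interchanged and $\pm$ signs swapped.

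Claim (3) is then purely formal: since $R_{S,T}^{+}\subseteq R^{+}$ and $R_{S,T}^{-}\subseteq R^{-}$ are disjoint, summing $\mathfrak{p}_{S,T}^{+}+\mathfrak{p}_{S,T}^{-}$ recovers $\mathfrak{l}_{S,T}\oplus\mathfrak{n}_{S,T}^{+}\oplus\mathfrak{n}_{S,T}^{-}=\mathfrak{q}_{S,T}$, and the root-space decompositions show the intersection is exactly $\mathfrak{l}_{S,T}$.

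Claim (4) is the one requiring actual content. Fix $\alpha\in R_{S,T}^{+}$ and $\beta\in R_{S,T}^{-}$ with $[\mathfrak{g}_{\alpha},\mathfrak{g}_{\beta}]\neq 0$; then either $\alpha+\beta=0$ or $\alpha+\beta\in R$. The first possibility is ruled out by the main identity: $\alpha=-\beta$ would force $\alpha\in \mathbb{Z}S\cap \mathbb{Z}T=\mathbb{Z}(S\cap T)$, hence $\alpha\in R_{S\cap T}$, contradicting $\alpha\in R_{S,T}^{+}=R_{S}^{+}\setminus R_{S\cap T}$. For the second, write $\alpha=\sum_{s\in S}c_{s}s$ with $c_{s}\geq 0$ and $-\beta=\sum_{t\in T}d_{t}t$ with $d_{t}\geq 0$. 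The hypotheses $\alpha\notin R_{S\cap T}$ and $\beta\notin R_{S\cap T}$ force some $c_{s_{0}}>0$ with $s_{0}\in S\setminus T$ and some $d_{t_{0}}>0$ with $t_{0}\in T\setminus S$. Then $\alpha+\beta$ has coefficient $c_{s_{0}}>0$ on $s_{0}$ and $-d_{t_{0}}<0$ on $t_{0}$, which is impossible for a root (whose simple-root coefficients are all of one sign). The main obstacle is merely keeping the four root subsets $R_{S,T}^{\pm}$, $R_{S\cap T}^{\pm}$ straight; once the decomposition $R_{S}^{+}=R_{S\cap T}^{+}\sqcup R_{S,T}^{+}$ is recorded, all steps are mechanical.
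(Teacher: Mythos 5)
Your proof is correct, and it fills in precisely the details that the paper compresses into the single word ``straightforward.'' The root-set computations for (1)--(3) are routine distributivity plus the observation $R_{S}^{+}=R_{S\cap T}^{+}\sqcup R_{S,T}^{+}$, and they check out. The only part with real content is (4), and your two-case argument is sound: the identity $\mathbb{Z}S\cap\mathbb{Z}T=\mathbb{Z}(S\cap T)$ (hence $R_{S}\cap R_{T}=R_{S\cap T}$, by linear independence of $\Pi$) rules out $\alpha+\beta=0$, and the sign analysis of simple-root coefficients rules out $\alpha+\beta\in R$, since $\alpha\in R_{S,T}^{+}$ forces a positive coefficient on some $s_{0}\in S\setminus T$ while $\beta\in R_{S,T}^{-}$ forces a negative coefficient on some $t_{0}\in T\setminus S$, and these are distinct simple roots that $\beta$ and $\alpha$ respectively cannot touch. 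This matches the approach the paper evidently has in mind.

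One cosmetic slip: in your computation for (1) you write ``$R^{-}\cap R_{S\cap T}^{-}\subseteq R_{S\cap T}$,'' where you presumably meant the cross-term $R^{-}\cap R_{S\cap T}=R_{S\cap T}^{-}$ is absorbed into $R_{S\cap T}$. The conclusion is unaffected, but you should state the distributed union $(R_{S}\cap R_{S\cap T})\cup(R_{S}\cap R^{+})\cup(R^{-}\cap R_{S\cap T})\cup(R^{-}\cap R^{+})=R_{S\cap T}\cup R_{S}^{+}\cup R_{S\cap T}^{-}\cup\emptyset$ explicitly to make the absorption transparent.
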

\begin{proof}
This is straightforward.
\end{proof}

By Richardson's Theorem on the existence of Richardson elements in
parabolic subalgebras, $\mathfrak{p}_{S,T}^{\pm}$ has a Richardson
element in $\mathfrak{n}_{S,T}^{\pm}$. It follows easily from part
(4) of Proposition \ref{structure} and by considering projections
with respect to the direct sum decomposition $\mathfrak{n}_{S,T} =
\mathfrak{n}_{S,T}^{+}\oplus  \mathfrak{n}_{S,T}^{-}$ that if
$\mathfrak{q}_{S,T}$ has a Richardson element $x$, then there exist
Richardson elements $x_{\pm}$ of $\mathfrak{p}_{S,T}^{\pm}$ such
that $x=x_{+}+x_{-}$. Note that if we take arbitrary Richardson
elements of $\mathfrak{p}_{S,T}^{\pm}$, their sum need not be a
Richardson element of $\mathfrak{q}_{S,T}$. This explains in part
why there is a constraint in how to glue exceptional
$\Delta$-filtered modules with linear $\Delta$-support in Section
\ref{exceptional:section}.

Let us suppose that $\mathfrak{q}_{S,T}$ has a Richardson element
$x$, and let $x_{\pm}$ be Richardson elements of
$\mathfrak{p}_{S,T}^{\pm}$ such that $x=x_{+}+x_{-}$. Using the
action of the group $\mathbf{Q}_{S,T}$, we may fix one of the
elements, say $x_{+}$.

By definition, we have $[x,\mathfrak{q}_{S,T} ] = \mathfrak{n}_{S,T}$. By part (4) of Proposition
\ref{structure}, we deduce that $[x_{\pm}, \mathfrak{q}_{S,T}] \subseteq \mathfrak{n}_{S,T}^{\pm}$.
It follows that to obtain $\mathfrak{n}_{S,T}^{-}\subseteq [x,\mathfrak{q}_{S,T}]$, it is necessary that
$$
[x_{-}, \mathfrak{q}_{S,T}^{x_{+}}]=\mathfrak{n}_{S,T}^{-}
$$
where $\mathfrak{q}_{S,T}^{x_{+}} = \{ z\in \mathfrak{q}_{S,T} ; [z,x_{+}] = 0\}$.

Using the above necessary condition, we found a seaweed Lie algebra which does not
admit any Richardson element. Let $\mathfrak{g}$ be a simple Lie algebra of type $E_{8}$.
In the numbering of simple roots of \cite[Chapter 18]{TYbook}, let
$$
S =\Pi \setminus \{\alpha_{8}\} \ , \ T = \Pi \setminus \{ \alpha_{4},\alpha_{5}\}.
$$
A straightforward computation gives
$$
\dim \mathfrak{q}_{S,T}=81 \ , \ \dim \mathfrak{l}_{S,T}=22 \ , \
\dim \mathfrak{n}_{S,T}^{+} = 56 \ , \ \dim \mathfrak{n}_{S,T}^{-} = 3.
$$
Let $(x_{i})_{1\leq i\leq 56}$ be the basis of $\mathfrak{n}_{S,T}^{+}$ consisting of
root vectors ordered in a certain way given by our routine
``Seaweed'' in {\sc Gap4}. We set
$$
x_{+} = \sum_{i=1}^{56} p_{i}x_{i}
$$
where $p_{i}$ is the $i$-th prime number. The function ``LieCentralizer'' then gives
$\dim \mathfrak{q}_{S,T}^{x_{+}} = 25$. Thus
$[\mathfrak{q}_{S,T}, x_{+}]=[\mathfrak{p}_{S,T}^{+},x_{+}]=\mathfrak{n}_{S,T}^{+}$.
So $x_{+}$ is a Richardson element of $\mathfrak{p}_{S,T}^{+}$.

Next, ``BasisVectors ( Basis ( $\mathfrak{q}_{S,T}^{x_{+}}$ ) )'' provides a basis
$(b_{i})_{1\leq i\leq 25}$ of $\mathfrak{q}_{S,T}^{x_{+}}$. We then check that
$[b_{i},\mathfrak{n}_{S,T}^{-}] = 0$ if $1\leq i\leq 22$,
$[b_{23},\mathfrak{n}_{S,T}^{-}] = [b_{24},\mathfrak{n}_{S,T}^{-}]=\field z
$ for some $z\in \mathfrak{n}_{S,T}^{-}$,
and $(\mathrm{ad} b_{25})|_{\mathfrak{n}_{S,T}^{-}} =-1/2$.

Since $\mathfrak{n}_{S,T}^{-}$ is abelian, we deduce from the above that for all
$x_{-}\in \mathfrak{n}_{S,T}^{-}$, we have
$$
[x_{-},\mathfrak{q}_{S,T}^{x_{+}}] \subseteq
\mathrm{Vect} (x_{-} , z ) ,
$$
so $\dim [x_{-},\mathfrak{q}_{S,T}^{x_{+}}]  \leq 2$. In particular, $\mathfrak{q}_{S,T}$ does not
admit a Richardson element.

\begin{corollary}
There exists a seaweed Lie algebra in the simple Lie algebra of type $E_{8}$
which does not admit a Richardson element.
\end{corollary}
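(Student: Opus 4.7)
The plan is to exhibit an explicit seaweed Lie algebra in $E_8$ and show that it has no Richardson element by using the necessary condition derived in the paragraphs just above the corollary. Specifically, if $\mathfrak{q}_{S,T}$ admits a Richardson element $x$, then, after applying an element of $\mathbf{Q}_{S,T}$, one may write $x=x_++x_-$ where $x_\pm$ are Richardson elements of the parabolic subalgebras $\mathfrak{p}_{S,T}^\pm$ of the reductive Lie algebras $\mathfrak{q}_{S,S}$ and $\mathfrak{q}_{T,T}$, and they must satisfy $[x_-,\mathfrak{q}_{S,T}^{x_+}]=\mathfrak{n}_{S,T}^-$. Since one may fix a model of $x_+$, the strategy is to choose $S,T$ so that $\mathfrak{n}_{S,T}^-$ is small (here three-dimensional), and then to exhibit a Richardson element $x_+$ of $\mathfrak{p}_{S,T}^+$ whose centralizer in $\mathfrak{q}_{S,T}$ acts on $\mathfrak{n}_{S,T}^-$ with image of dimension strictly less than $3$, for every $x_-\in\mathfrak{n}_{S,T}^-$.

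The concrete choice I would make is $S=\Pi\setminus\{\alpha_8\}$ and $T=\Pi\setminus\{\alpha_4,\alpha_5\}$ in the numbering of \cite[Chapter 18]{TYbook}, yielding $\dim\mathfrak{q}_{S,T}=81$, $\dim\mathfrak{l}_{S,T}=22$, $\dim\mathfrak{n}_{S,T}^+=56$ and, crucially, $\dim\mathfrak{n}_{S,T}^-=3$. I would then use a computer algebra system (as the authors do, \textsc{Gap4}) to proceed as follows. First, construct an explicit generic-looking element $x_+\in\mathfrak{n}_{S,T}^+$, for instance $x_+=\sum_{i=1}^{56}p_ix_i$ with $p_i$ the $i$-th prime and $(x_i)$ a fixed ordered basis of root vectors in $\mathfrak{n}_{S,T}^+$. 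Second, compute the centralizer $\mathfrak{q}_{S,T}^{x_+}$ and check that $\dim \mathfrak{q}_{S,T}^{x_+}=25$; together with the identity $\dim\mathfrak{q}_{S,T}=\dim\mathfrak{n}_{S,T}^++\dim\mathfrak{q}_{S,T}^{x_+}$ this certifies that $x_+$ is Richardson in $\mathfrak{p}_{S,T}^+$. Third, extract a basis $(b_i)_{1\le i\le 25}$ of $\mathfrak{q}_{S,T}^{x_+}$ and examine the adjoint action of each $b_i$ on the three-dimensional space $\mathfrak{n}_{S,T}^-$.

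The expected outcome, which one reads off the output, is that only three basis vectors act non-trivially on $\mathfrak{n}_{S,T}^-$: two of them, $b_{23}$ and $b_{24}$, send $\mathfrak{n}_{S,T}^-$ into the same one-dimensional subspace $\field z$ of $\mathfrak{n}_{S,T}^-$, while $b_{25}$ acts as a scalar ($-1/2$ times the identity). Since $\mathfrak{n}_{S,T}^-$ is abelian by part (4) of Proposition \ref{structure}, for any $x_-\in\mathfrak{n}_{S,T}^-$ one gets
$$
[x_-,\mathfrak{q}_{S,T}^{x_+}]\subseteq \mathrm{Vect}(x_-,z),
$$
so $\dim [x_-,\mathfrak{q}_{S,T}^{x_+}]\le 2<3=\dim\mathfrak{n}_{S,T}^-$, contradicting the necessary condition. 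Hence $\mathfrak{q}_{S,T}$ admits no Richardson element.

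The main obstacle here is conceptual rather than computational. One must first isolate the correct obstruction on the pair $(x_+,x_-)$, which relies on Proposition \ref{structure}(4) (so that the $\pm$ components decouple under the bracket) together with the possibility of normalising $x_+$ using the $\mathbf{Q}_{S,T}$-action on the open orbit in $\mathfrak{n}_{S,T}^+$; and one must then search, among pairs $(S,T)$ in small-rank simple Lie algebras, for a configuration in which $\dim\mathfrak{n}_{S,T}^-$ is small enough to be blocked by the centraliser of a generic $x_+$. Once such a candidate is found, the verification of the dimensions of $\mathfrak{q}_{S,T}^{x_+}$ and of the ranks of $(\mathrm{ad}\, b_i)|_{\mathfrak{n}_{S,T}^-}$ is routine but requires a computer algebra system, because $E_8$ is too large for hand computation.
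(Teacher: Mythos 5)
Your proposal is correct and follows the paper's own argument essentially verbatim: the same decomposition from Proposition \ref{structure}, the same necessary condition $[x_-,\mathfrak{q}_{S,T}^{x_+}]=\mathfrak{n}_{S,T}^-$, the same choice $S=\Pi\setminus\{\alpha_8\}$, $T=\Pi\setminus\{\alpha_4,\alpha_5\}$ in $E_8$, the same test element $x_+=\sum p_i x_i$, and the same \textsc{Gap4} computation of $\dim\mathfrak{q}_{S,T}^{x_+}=25$ and of the adjoint action of the centraliser basis on the three-dimensional $\mathfrak{n}_{S,T}^-$.
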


\begin{remarks}
(1) By taking generic elements, we have checked by using {\sc Gap4}
that if $\mathfrak{g}$ is a simple Lie algebra of rank $\leq 7$,
then any seaweed Lie algebra contained in $\mathfrak{g}$ has a
Richardson element.

(2) Using a type by type consideration and the fact that abelian
ideals of parabolic subalgebras are spherical \cite{PR}, we can show
that if the nilpotent radical of a seaweed Lie algebra is abelian,
then it has a Richardson element.
\end{remarks}

\begin{acknowledgements}
This work was done during the authors stay at Universit\"{a}t zu
K\"{o}ln and Universit\'e de Poitiers.  The authors would like thank
both institutes for their hospitality and support, in particular,
Abderrazak Bouaziz, Steffen K\"onig and Peter Littelmann. The authors would
like to thank Michel Duflo, Steffen K\"onig, Dmitri Panyushev, 
Patrice Tauvel and Changchang Xi for interesting questions and discussions.
\end{acknowledgements}

\bigskip

{\parindent=0cm
Bernt Tore JENSEN\\
Department of Mathematical Sciences, \\
Norwegian University of Science and Technology, \\
N-7034 Trondheim, \\
Norway.\\
email:  berntj@math.ntnu.no\\
\medskip

Xiuping SU \\
Mathematisches Insitut, \\
Universit\"{a}t zu K\"{o}ln, \\
Weyertal 86-90, 50931 K\"{o}ln, \\
Germany.\\
email: xsu@math.uni-koeln.de\\
\medskip

Rupert W.T. YU\\
UMR 6086 CNRS,Ê\\
D\'epartement de Math\'ematiques,\\
Universit\'e de Poitiers, \\
T\'el\'eport 2 - BP 30179, \\
Boulevard Marie et Pierre Curie,\\
86962 Futuroscope Chasseneuil cedex,\\
France.\\
email: yuyu@math.univ-poitiers.fr
}
\end{document}